\documentclass[leqno]{article}
\usepackage{palatino} 
\usepackage{verbatim,amsmath,amsfonts,amssymb,array,theorem}
\usepackage[mathbf,mathcal]{euler}
\usepackage{pifont,fancyheadings_ssv,wrapfig,graphicx}
\usepackage{epstopdf}
\usepackage[usenames,dvipsnames]{xcolor}
\usepackage{tikz, tikz-cd}
\usepackage{extarrows}
\usepackage{enumitem}
\usepackage{mathtools}
\usepackage[top=1in, bottom=1in, outer=1in, inner=1in, heightrounded, marginparwidth=1.5cm, marginparsep=1cm]{geometry}
 \usepackage[savepos]{zref}
\usetikzlibrary{matrix, calc, positioning}
\usepackage{thmtools}
\usepackage{thm-restate}
\usepackage{pgfplots}
\usepackage{subcaption}
\usepackage{hyperref}
\usepackage{bm}

\setlength{\oddsidemargin}{31 pt}
\setlength{\evensidemargin}{90.755 pt}
\setlength{\topmargin}{-5 pt}
\setlength{\headheight}{12 pt}
\setlength{\headsep}{20 pt}
\setlength{\textheight}{620 pt}           
\setlength{\textwidth}{426 pt}
\setlength{\footskip}{36 pt}
\setlength{\parindent}{0.5 in}
%
%
%
%


\setlength{\headrulewidth}{0.4pt}
\setlength{\plainheadrulewidth}{0pt}
\setlength{\plainfootrulewidth}{0.8pt}
\lhead%
  [\fancyplain{}{\bfseries\thepage}]{\fancyplain{}{\bfseries\rightmark}}
\chead[\fancyplain{}{}]{\fancyplain{}{}}
\rhead%
  [\fancyplain{}{\bfseries\leftmark}]{\fancyplain{}{\bfseries\thepage}}
\cfoot{}
%
\pagenumbering{arabic}
%
%
\setlength{\footnotesep}{12pt}
%



\let\frak=\mathfrak

\renewcommand{\emptyset}{\mathop{\varnothing}} 

\newcommand{\qed}{\mbox{\hfill$\square$}}




 

  

\newcounter{acount}

%
%
\newcounter{Example}
\newenvironment{example}%
    {\renewcommand{\theExample}{\arabic{Example}}\refstepcounter{Example}%
     \begin{list}{}%
        {\usecounter{Examples}%
         \setlength{\labelsep}{0pt}\setlength{\leftmargin}{0pt}%
         \setlength{\labelwidth}{0pt}\setlength{\listparindent}{0.5in}%
        }%
     \item[\normalfont\textsc{Example} \arabic{Example}) ]%
     \renewcommand{\theExample}{\arabic{Example}}%
    }
    {\end{list}}
\newcounter{Examples}
\newcommand{\firstexample}%
    {\renewcommand{\Lbl}{\textsc{Examples: }}%
     \setcounter{Examples}{\theExample}%
    }

    {\renewcommand{\theExample}{\arabic{Example}}%
     \setcounter{Examples}{\theExample}%
     \refstepcounter{Examples}%
     \begin{list}{\Lbl\arabic{Examples}) }%
        {\usecounter{Examples}%
         \setlength{\labelsep}{0pt}\setlength{\leftmargin}{0pt}%
         \setlength{\labelwidth}{0pt}\setlength{\listparindent}{0.5in}%
        }%
    }%
    {\setcounter{Example}{\theExamples}%
     \renewcommand{\theExample}{\arabic{Example}}\end{list}%
    }   
%
\newcommand{\Lbl}{}                            



\newenvironment{proof}{%
    \medskip\noindent\textsc{Proof}:
  }{
    \hfill\qed\medskip
  }
\newcounter{bbean}
%
  \newcounter{Remarks}
  \newenvironment{remark}%
    {\renewcommand{\theRemark}{\arabic{Remark}}\refstepcounter{Remark}%
     \begin{list}{}%
        {\usecounter{Remarks}%
         \setlength{\labelsep}{0pt}\setlength{\leftmargin}{0pt}%
         \setlength{\labelwidth}{0pt}\setlength{\listparindent}{0.5in}%
        }%
     \item[\normalfont\textsc{Remark} \arabic{Remark}) ]%
     \renewcommand{\theRemark}{\arabic{Remark}}%
    }
    {\end{list}}
  \newcounter{Remark}
\newcommand{\firstremark}%
    {\renewcommand{\Lbl}{\textsc{Remarks: }}%
     \setcounter{Remarks}{\theRemark}%
    }

    {\renewcommand{\theRemark}{\arabic{Remark}}%
     \setcounter{Remarks}{\theRemark}%
     \refstepcounter{Remarks}%
     \begin{list}{\Lbl\arabic{Remarks}) }%
        {\usecounter{Remarks}%
         \setlength{\labelsep}{0pt}\setlength{\leftmargin}{0pt}%
         \setlength{\labelwidth}{0pt}\setlength{\listparindent}{0.5in}%
        }%
    }%
    {\setcounter{Remark}{\theRemarks}%
     \renewcommand{\theRemark}{\arabic{Remark}}\end{list}%
    }   
\theoremstyle{plain}
{\theorembodyfont{\rmfamily}
}
{\theorembodyfont{\rmfamily} \newtheorem{definition}{Definition}}
\newtheorem{lemma}{Lemma}                      

{\theorembodyfont{\rmfamily} }
{\theorembodyfont{\rmfamily} }
{\theorembodyfont{\rmfamily} }

\theoremheaderfont{\scshape}
%
%

 \hypersetup{
    colorlinks,
    citecolor=blue,
    filecolor=blue,
    linkcolor=blue,
    urlcolor=blue
}

 \newcommand{\bigslant}[2]{{\raisebox{.2em}{$#1$}\left/\raisebox{-.2em}{$#2$}\right.}}

\newcommand{\ZZ}{\mathbb{Z}}

\newcommand{\RR}{\mathbb{R}}

\newcommand{\CC}{\mathbb{C}}
\newcommand{\NN}{\mathbb{N}}

\newcommand{\KK}{\mathbb{K}}

\newcommand{\dd}{\partial}

\newcommand{\la}{\langle}
\newcommand{\ra}{\rangle}
\newcommand{\cg}[1]{{\cal #1}}

\newcommand{\Crit}{\text{Crit}}

\tikzcdset{arrow style=tikz}

\begin{document}
\thispagestyle{plain}
\title{Rabinowitz Floer homology and mirror symmetry}
\author{Sara Venkatesh}
\date{}
\maketitle
\begin{abstract}
\noindent We define a quantitative invariant of Liouville cobordisms with monotone filling through an action-completed symplectic cohomology theory.  We illustrate the non-trivial nature of this invariant by computing it for annulus subbundles of the tautological bundle over $\CC P^1$ and give further conjectural computations based on mirror symmetry.  We prove a non-vanishing result in the presence of Lagrangian submanifolds with non-vanishing Floer homology.
\end{abstract}
\tableofcontents

\section{Introduction}
\label{introduction}
In this paper we introduce a quantitative invariant of symplectic cobordisms between contact manifolds, and we use ideas inspired by mirror symmetry to compute a non-trivial example.  When the cobordism is the trivial identity cobordism of a contact manifold, this invariant is a version of the Rabinowitz Floer homology studied by Cieliebak-Frauenfelder-Oancea in \cite{cieliebak-f-o} and akin to the Rabinowitz Floer homology of negative line bundles studied by Albers-Kang in \cite{albers-k}.  In general, computing Rabinowitz Floer homology is difficult; one of the main ideas of this paper is that mirror symmetry provides conjectural computations of our invariant.

To set the context for this invariant, recall that an embedding $V\subset M$ of Liouville domains engenders a map from the symplectic homology $SH_*(V)$ of $V$ to the symplectic cohomology $SH^*(M)$ of $M$.  In \cite{cieliebak-o}, Cieliebak-Oancea defined the symplectic cohomology $SH^*(W)$ of the Liouville cobordism $W := M\setminus V$ to measure how far from isomorphism this map strays.  That is, there is a long-exact sequence
\begin{equation}
...\longrightarrow SH_i(V)\longrightarrow SH^i(M)\longrightarrow SH^i(W)\longrightarrow SH_{i+1}(V)\longrightarrow...
\label{eq:les-co}
\end{equation}
By \cite{cieliebak-f-o}, this construction specialized to the core of a trivial cobordism recovers the Rabinowitz Floer homology.

Suppose instead that $V$ and $M$ are monotone symplectic domains with positive contact boundary, while $W$ remains a Liouville cobordism.  Taking coefficients in the universal Novikov field $\Lambda$ (\ref{eq:novikov}), one can consider the action-completed symplectic cohomology $\widehat{SH^*}(M; \Lambda)$ of $M$ and action-completed symplectic homology $\widehat{SH_*}(V; \Lambda)$ of $V$.  In Section \ref{sh(w)}, we define the {\it completed symplectic cochain complex $\widehat{SC^*}(W; \Lambda)$ of $W$}.  We denote its homology by $\widehat{SH^*}(W; \Lambda)$, so that, analogously to (\ref{eq:les-co}),

\begin{restatable}{thm}{les}
\label{thm:les}
There is a long-exact sequence
\[
...\longrightarrow \widehat{SH_i}(V; \Lambda)\longrightarrow \widehat{SH^i}(M; \Lambda)\longrightarrow \widehat{SH^i}(W; \Lambda)\longrightarrow \widehat{SH_{i+1}}(V; \Lambda)\longrightarrow...
\]
\end{restatable}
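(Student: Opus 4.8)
\medskip\noindent\textbf{Proof proposal.}\quad The plan is to run the Cieliebak--Oancea cobordism argument of \cite{cieliebak-o} with the action completion inserted, the point being that the action-filtration splitting underlying that argument survives the completion. Recall from Section~\ref{sh(w)} that $\widehat{SC^*}(W;\Lambda)$ is assembled from a cofinal family of ``V-shaped'' Hamiltonians $H_\tau$ adapted to the nested pair $V\subset M$, whose $1$-periodic orbits fall into two groups, those in the Liouville region near $\partial M$ and those near $\partial V$, separated by a uniform action gap, with the orbits near $\partial M$ spanning a subcomplex of $SC^*(H_\tau)$. I would first establish, \emph{before completing}, the resulting short exact sequence of filtered $\Lambda$-cochain complexes
\[
0\longrightarrow SC^*_{M}(H_\tau)\longrightarrow SC^*(H_\tau)\longrightarrow SC^*_{V}(H_\tau)\longrightarrow 0 ,
\]
where $SC^*_M(H_\tau)$, resp.\ $SC^*_V(H_\tau)$, denotes the span of the orbits near $\partial M$, resp.\ near $\partial V$; this uses only that the Floer differential does not increase the action and that the two orbit groups are separated. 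As the slope on $\partial M$ grows, $SC^*_M(H_\tau)$ computes (a truncation of) $SH^*(M;\Lambda)$; the quotient $SC^*_V(H_\tau)$, after the degree reversal forced by the downward slope on $\partial V$, computes a shift of the completed symplectic \emph{chain} complex of $V$. Throughout, the relevant filtration is the genuine $\RR$-valued action incorporating the Novikov exponent, and the completion is formed with respect to it as recalled in Section~\ref{sh(w)}.

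Next I would verify naturality. The continuation maps comparing $H_\tau$ and $H_{\tau'}$ for $\tau\le\tau'$, together with the action-truncation projections $SC^*/F_{\ge a}\twoheadrightarrow SC^*/F_{\ge a'}$, respect the above splitting up to the small action shifts that are absorbed by monotone homotopies; hence the short exact sequence is natural in both the Hamiltonian parameter and the truncation level. Taking $\varinjlim_\tau$ (which is exact) and then the inverse limit over the truncation level defining the completion, one uses that the transition maps in the truncation tower are surjective, so the tower is Mittag--Leffler and $\varprojlim$ is exact on it. This produces a short exact sequence of completed complexes
\[
0\longrightarrow \widehat{SC^*}(M;\Lambda)\longrightarrow \widehat{SC^*}(W;\Lambda)\longrightarrow \widehat{SC_{*+1}}(V;\Lambda)\longrightarrow 0 ,
\]
and Theorem~\ref{thm:les} is the associated long exact sequence in cohomology, rewritten so that the connecting homomorphism appears as the map $\widehat{SH_i}(V;\Lambda)\to\widehat{SH^i}(M;\Lambda)$. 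Identifying this connecting map with the composite of the Viterbo-type transfer map and the canonical map induced by $V\subset M$, in the manner of \cite{cieliebak-o}, is useful for later computations but is not needed for the exact sequence itself; equivalently, one may simply observe that $\widehat{SC^*}(W;\Lambda)$ is the mapping cone of that composite.

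The step I expect to be the main obstacle is maintaining exactness through the two limiting operations at once. Surjectivity of the truncation transition maps disposes of the potential $\varprojlim^1$ term, so the real work lies in the compatibility claim above: that monotone homotopies between V-shaped Hamiltonians send orbits near $\partial M$ to orbits near $\partial M$ modulo strictly smaller action, \emph{uniformly in $\tau$}, so that the splitting persists in the colimit and then in the completion. This is where the uniform action gap and the precise geometry of the nested Liouville structure are essential, and where one must check that the error terms in the continuation-map energy estimates do not accumulate as $\tau\to\infty$. A secondary, purely bookkeeping issue is the grading computation identifying the quotient with the shift $\widehat{SC_{*+1}}(V;\Lambda)$, for which I would follow the index calculations in \cite{cieliebak-o}.
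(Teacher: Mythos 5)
Your final short exact sequence and the resulting long exact sequence are correct, and you correctly identify the Mittag--Leffler criterion for the surjective truncation tower as the device that keeps $\varprojlim$ exact. But you are proving the theorem from a construction that is not the one in Section~\ref{sh(w)}, and the list of worries you name at the end reflects this mismatch. The paper does not build $\widehat{SC^*}(W;\Lambda)$ from a single cofinal family of V-shaped Hamiltonians whose orbits split by a uniform action gap. Instead it uses two separate telescope complexes: the positively-indexed Hamiltonians $Ad_+(M)$ (unchanged from the definition of $\widehat{SC^*}(M;\Lambda)$) and the negatively-indexed step Hamiltonians $Ad_-(V,M)$, whose orbits inside $V\cup[0,\epsilon_V]\times\partial_-W$ span a subcomplex $CF^*_{V,(a,b)}(H^{\tau_i};\Gamma)$. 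The cobordism complex $SC^*_{(a,b)}(W;\Gamma)$ is \emph{defined} as the cone of a chain map $\mathfrak{c}$ from the $V$-localized chain telescope $SC_*^{V,(a,b)}(M;\Gamma)$ to the cochain telescope $SC^*_{(a,b)}(M;\Gamma)$, the map running through a single continuation $c:CF^*_{V,(a,b)}(H^{\tau_{-1}};\Gamma)\to CF^*_{(a,b)}(H^{\tau_0};\Gamma)$. At the truncated level the distinguished triangle---equivalently your short exact sequence---is therefore part of the definition and requires no argument.

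The actual content lies elsewhere: Lemma~\ref{lem:subcomplex} shows that $CF^*_{V,(a,b)}(H^{\tau_i};\Gamma)$ is a subcomplex, and it is proved with the integrated maximum principle rather than with an action gap---a distinction that matters here, since over the Novikov ring the exponent $T^\alpha$ can shift the action of any generator to any real value, so a uniform gap between the two orbit groups is simply not available; Lemma~\ref{lem:continuation} shows the continuation maps within $Ad_-(V,M)$ preserve this subcomplex; and Lemma~\ref{lem:chainiso} identifies the action-completion of $SC_*^{V,(a,b)}(M;\Gamma)$ with $\widehat{SC_*}(V;\Lambda)$. Once these three lemmas are in place, the step you flagged as the main obstacle---uniformity in $\tau$ of the action gap and non-accumulation of continuation-map error terms---does not arise. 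The only limit-theoretic check is the Mittag--Leffler observation you already made, exactly as in the completed Rabinowitz Floer triangle of Section~\ref{rfh}, and the theorem is then the formal long exact sequence of the cone.
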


The construction of a chain complex that computes the symplectic cohomology of a cobordism is new, as is the consideration of action-completed cohomologies in this context.  The former extends the telescope construction of Abouzaid and Seidel \cite{abouzaid-s}.  The latter, although analogous to the uncompleted theory, has unexpected quantitative properties unseen in the set-up considered in \cite{cieliebak-o}.  Cobordisms in the tautological line bundle over $\CC P^1$ illustrate these properties.

\begin{restatable}{thm}{examplee}
\label{prop:example}
Let $E$ be the total space of the line bundle $\cg{O}(-1)\longrightarrow \CC P^1$ with area one exceptional divisor.  Let $W$ be a cobordism in $E$ between a sphere bundle of radius $R_1$ (possibly empty) and a sphere bundle of radius $R_2$, with $R_1 \leq R_2$.  Then
\[
\widehat{SH^*}(W; \Lambda)\cong\left\{
\begin{array}{cc}
\Lambda & R_1 \leq \frac{1}{\sqrt{\pi}} \leq R_2 \\
0 & \text{ otherwise}
\end{array}\right..
\]
\end{restatable}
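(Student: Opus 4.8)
The plan is to feed the long exact sequence of Theorem~\ref{thm:les} with an explicit computation of the action-completed invariants of the \emph{disk} subbundles of $E$. Write $D_R\subset E$ for the closed disk subbundle of radius $R$, with the convention $D_0=\emptyset$; a cobordism $W$ as in the statement is then $D_{R_2}\setminus D_{R_1}$, with filling $M=D_{R_2}$ and inner end filled by $V=D_{R_1}$. Each $D_R$ is monotone --- $\pi_2(D_R)=\pi_2(\CC P^1)$ is generated by the zero section, on which $c_1(TD_R)=c_1(T\CC P^1)+c_1(\cg{O}(-1))$ and $[\omega]$ both evaluate to $1$ --- so Theorem~\ref{thm:les} applies and produces, for each $i$,
\[
\cdots\longrightarrow \widehat{SH_i}(D_{R_1};\Lambda)\xrightarrow{\,\iota\,}\widehat{SH^i}(D_{R_2};\Lambda)\longrightarrow \widehat{SH^i}(W;\Lambda)\longrightarrow \widehat{SH_{i+1}}(D_{R_1};\Lambda)\longrightarrow\cdots .
\]

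The engine of the argument is the following pair of disk-bundle computations: $\widehat{SH^*}(D_R;\Lambda)\cong\Lambda$ if $R\geq 1/\sqrt{\pi}$ and $=0$ if $R<1/\sqrt{\pi}$; and $\widehat{SH_*}(D_R;\Lambda)\cong\Lambda$ if $R>1/\sqrt{\pi}$ and $=0$ if $R\leq 1/\sqrt{\pi}$. I would establish these with a Morse--Bott (cascades) model for the Reeb flow on the circle subbundle $S_R=\dd D_R$: its closed orbits are the $S^1$-fibers of the associated circle bundle, taken with multiplicity, of action the positive integer multiples of the fiber-disk area $\pi R^2$, while the only ``quantum'' input to the Floer differential is the count of holomorphic spheres representing the zero-section class $[\CC P^1]$ (which has symplectic area $1$ and Chern number $1$; note that every non-constant sphere in $E$ is contained in a cover of the zero section). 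Organizing the resulting filtered complex one reduces to a computation over $\Lambda$ in which the zero-section sphere (weight $T^{1}$) is played off against the minimal Reeb action $\pi R^2$; taking the action completion, the homology collapses to a single copy of $\Lambda$ exactly when the fiber disk is at least as large as the exceptional divisor, $\pi R^2\geq 1$, and vanishes otherwise. The homological statement is obtained by the same method, the only difference being that the action completion of the homological theory truncates the action filtration from the opposite side, so the generator sitting exactly at the threshold $\pi R^2=1$ survives in cohomology but not in homology --- this is the origin of the strict versus non-strict inequalities, and one of the ``unexpected quantitative properties'' advertised in the introduction.

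Granting these, the theorem drops out of the long exact sequence by a case check on the position of $1/\sqrt{\pi}$ relative to $[R_1,R_2]$. If $R_2<1/\sqrt{\pi}$ then $R_1\leq R_2<1/\sqrt{\pi}$, so the two flanking terms $\widehat{SH_*}(D_{R_1};\Lambda)$ and $\widehat{SH^*}(D_{R_2};\Lambda)$ both vanish and hence $\widehat{SH^*}(W;\Lambda)=0$. If $R_1\leq 1/\sqrt{\pi}\leq R_2$ then $\widehat{SH_*}(D_{R_1};\Lambda)=0$ (because $R_1\leq 1/\sqrt{\pi}$), the sequence degenerates to isomorphisms $\widehat{SH^*}(W;\Lambda)\cong\widehat{SH^*}(D_{R_2};\Lambda)$, and the latter is $\Lambda$ by the first computation (because $R_2\geq 1/\sqrt{\pi}$). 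Finally, if $1/\sqrt{\pi}<R_1\leq R_2$ then $\widehat{SH_*}(D_{R_1};\Lambda)$ and $\widehat{SH^*}(D_{R_2};\Lambda)$ are each free of rank one, and it remains only to prove that $\iota$ is an isomorphism, after which the sequence forces $\widehat{SH^*}(W;\Lambda)=0$. For this I would identify the rank-one group in each case with the image, under a PSS-type map, of the distinguished class of $QH^*(D_R)$ whose survival under completion is guaranteed by $\pi R^2\geq 1$; since $\iota$ is compatible with these PSS maps (equivalently, with the continuation map $\widehat{SH^*}(D_{R_1};\Lambda)\to\widehat{SH^*}(D_{R_2};\Lambda)$ that enlarges the radius), it matches the two generators --- in particular the degrees agree and $\iota\neq 0$ --- and a nonzero map between free rank-one $\Lambda$-modules is an isomorphism.

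The main obstacle is the disk-bundle computation: constructing the Morse--Bott/cascades model together with its action filtration, matching the curve counts in the Floer differential with the genus-zero Gromov--Witten data of $\cg{O}(-1)\to\CC P^1$, controlling the action completion, and --- the sharpest point --- pinning the threshold down to $\pi R^2 = 1$ with the correct (strict vs.\ non-strict) inequality in each of the homological and cohomological theories. A secondary but genuinely non-formal point is the isomorphism $\iota$ in the regime $R_1>1/\sqrt{\pi}$, which in particular requires checking that the surviving generators of $\widehat{SH_*}(D_{R_1})$ and $\widehat{SH^*}(D_{R_2})$ lie in the same degree; everything else is a formal consequence of Theorem~\ref{thm:les}.
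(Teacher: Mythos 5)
Your blueprint matches the paper's almost exactly: compute $\widehat{SH^*}$ and $\widehat{SH_*}$ of disk bundles with precisely the strict/non-strict threshold inequalities you identify (the paper obtains these by rewriting the action of a generator as $(1-\pi R_n^2)s + C(z)$ and checking when the formal coboundary $Z=\sum T^{i-1}x_+^i+T^iy_+^i$ lies in the completed complex), then feed these into Theorem~\ref{thm:les} and do the same three-way case check. So the structure is right.

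The step you flag as ``genuinely non-formal'' --- showing $\mathfrak{c}:\widehat{SH_*}(D_{R_1};\Lambda)\to\widehat{SH^*}(D_{R_2};\Lambda)$ is an isomorphism when $R_1>1/\sqrt{\pi}$ --- is also the one place your argument would not go through as written. The map $\mathfrak{c}$ is not the continuation map $\widehat{SH^*}(D_{R_1};\Lambda)\to\widehat{SH^*}(D_{R_2};\Lambda)$ that enlarges the radius; its source is the homological theory built from Hamiltonians of \emph{negative} slope on the collar of $\dd D_{R_1}$, and there is no PSS map from quantum cohomology into that theory, so ``identify the rank-one group with a PSS image of a class in $QH^*(D_R)$'' does not apply to the source. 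What does exist is a factorization of $\mathfrak{c}$ as reverse-PSS followed by forward-PSS, and this leaves two genuinely non-formal checks: that the generator of $\widehat{SH_*}(D_{R_1};\Lambda)$ has nonzero reverse-PSS image in quantum cohomology, and that this image is not annihilated by the forward PSS map. The paper instead applies Poincar\'e duality --- $CF_*(H_0)\cong CF^*(-H_0)$, so $\mathfrak{c}$ becomes a single continuation map $c^{-1}$ --- and pins down $c^{-1}$ on the generators $x_-^0,y_-^0$ by pushing them through the closed-open maps $\iota^{\tau_i}$ of Section~\ref{non-vanish} into the Lagrangian Floer cohomology of the monotone torus at radius $1/\sqrt{\pi}$, with explicit counts of Maslov-$2$ disks and gradient spikes (the maps $\phi^0,\phi^{-1}$). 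That Lagrangian Floer comparison, not a soft PSS-compatibility statement, is what forces $\mathfrak{c}$ to send the generator to the generator.
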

Thus, $\widehat{SH^*}(W; \Lambda)$ is non-zero if and only if $W$ contains the sphere bundle of radius $\frac{1}{\sqrt{\pi}}$.  This is surprising, as uncompleted theories defined on domains of finite ``radius'', are isomorphic to the theories defined at ``infinite radius''.  Uncompleted theories therefore capture symplectic information common to domains of all radii.  In contrast, $\widehat{SH^*}(W; \Lambda)$ captures symplectic information unique to $W$: Smith showed in \cite{smith} that the  sphere bundle of radius $\frac{1}{\sqrt{\pi}}$ contains a monotone, Floer-theoretically essential Lagrangian torus $L$; moreover, Ritter-Smith showed that this Lagrangian split-generates the wrapped Fukaya category of $Tot(\cg{O}(-1)\longrightarrow \CC P^1)$ \cite{ritter-s}.  As is now suggested by Theorem \ref{prop:example}, the existence of such Lagrangians within a cobordism $W$ is intimately tied to the non-vanishing of $\widehat{SH^*}(W; \Lambda)$.

\begin{restatable}{thm}{nonvanish}
\label{thm:nonvanish}
Let $M$ be a monotone symplectic manifold and $W\subset M$ a Liouville cobordism.  Suppose that $W$ contains a compact, oriented monotone Lagrangian $L$.  If $L$ admits a flat line bundle $E_{\gamma}$ such that the Floer homology $HF^*(L, E_{\gamma})\neq 0$, then 
\[
\widehat{SH^*}(W; \Lambda)\neq 0.
\]
If $\Lambda$ is defined over a coefficient field of characteristic not equal to two, we also require the Lagrangian to be spin.
\end{restatable}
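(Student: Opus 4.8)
The plan is to produce a nonzero class in $\widehat{SH^*}(W;\Lambda)$ as the image of the unit of $\widehat{SH^*}(M;\Lambda)$ under the map $f\colon\widehat{SH^*}(M;\Lambda)\to\widehat{SH^*}(W;\Lambda)$ of Theorem~\ref{thm:les}, its non-vanishing being detected by a closed-open map to $HF^*(L,E_\gamma)$. Recall first the closed-open map for $M$: counting Floer half-cylinders --- a disk with one interior positive puncture, asymptotic there to a Hamiltonian orbit and with boundary on $L$ twisted by the holonomy of $E_\gamma$ --- defines $\mathcal{CO}_M\colon\widehat{SH^*}(M;\Lambda)\to HF^*(L,E_\gamma)$. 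Monotonicity of $M$ and of $L$ (and, over characteristic $\neq 2$, the spin structure, which one needs for coherent orientations) gives compactness of the relevant moduli spaces with no convexity input; the map is action-nonincreasing, so it extends to the action completion, and $HF^*(L,E_\gamma)$, being finite-dimensional over $\Lambda$, is already complete, so nothing is lost. By the unit axiom for the closed-open map, $\mathcal{CO}_M$ carries the unit $e_M$ of $\widehat{SH^*}(M;\Lambda)$ to the unit $e_L$ of $HF^*(L,E_\gamma)$; since $HF^*(L,E_\gamma)\neq 0$, we have $e_L\neq 0$, so $\mathcal{CO}_M(e_M)=e_L\neq 0$.

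The heart of the proof is to factor $\mathcal{CO}_M$ through $f$. The long exact sequence of Theorem~\ref{thm:les} realizes the chain model $\widehat{SC^*}(W;\Lambda)$ of Section~\ref{sh(w)} as a mapping cone of a chain map $\phi\colon\widehat{SC_*}(V;\Lambda)\to\widehat{SC^*}(M;\Lambda)$, with $f$ induced by the inclusion of $\widehat{SC^*}(M;\Lambda)$ into this cone. To extend the cochain-level closed-open map $\mathcal{CO}_M\colon\widehat{SC^*}(M;\Lambda)\to CF^*(L,E_\gamma)$ over the cone --- producing $\mathcal{CO}_W$ with $\mathcal{CO}_W\circ f=\mathcal{CO}_M$ --- it suffices to exhibit a nullhomotopy of the composite $\mathcal{CO}_M\circ\phi\colon\widehat{SC_*}(V;\Lambda)\to CF^*(L,E_\gamma)$. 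Here the hypothesis $L\subset W$, i.e.\ $L$ disjoint from the filling $V$, is exactly what is needed: the chain map $\phi$ is assembled from Floer data concentrated near $V$ and $\partial V$, whereas the half-cylinders defining $\mathcal{CO}_M$ have boundary on $L$ in the interior of $W$, and organizing these data into a single family of moduli spaces --- compact by monotonicity of $M$ --- yields the nullhomotopy. Granting it, $\mathcal{CO}_W(f(e_M))=\mathcal{CO}_M(e_M)=e_L\neq 0$, hence $f(e_M)\neq 0$ in $\widehat{SH^*}(W;\Lambda)$, and therefore $\widehat{SH^*}(W;\Lambda)\neq 0$.

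The main obstacle is this nullhomotopy of $\mathcal{CO}_M\circ\phi$, together with its compatibility with the action completion. Unlike the Liouville situation of \cite{cieliebak-o} there is no maximum principle available, so compactness of every moduli space involved --- those defining $\mathcal{CO}_M$, those defining $\phi$, and the one producing the homotopy --- has to be extracted from monotonicity of $M$ alone, and each of these operations must be shown continuous for the action filtration so that it descends to the completion, all while respecting the two boundary slopes that enter the definition of $\widehat{SC^*}(W;\Lambda)$. Carrying this out coherently, with the local system $E_\gamma$ and the orientation data built in, is the technical core; the remainder of the argument is formal.
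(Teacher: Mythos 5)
Your overall architecture --- detect a class in $\widehat{SH^*}(M;\Lambda)$ by a closed-open map to $HF^*(L,E_\gamma)$, then use the exact triangle of Theorem \ref{thm:les} to push it into $\widehat{SH^*}(W;\Lambda)$ --- is the same as the paper's, but the step you yourself identify as the technical core (the nullhomotopy of $\mathcal{CO}_M\circ\phi$, where $\phi=\mathfrak{c}$) is a genuine gap, and the mechanism you offer for it is not the right one. You argue that because $L\subset W$ is disjoint from $V$ and the Floer data of $\phi$ is ``concentrated near $V$'', the moduli spaces can be organized into a compact family giving the homotopy, with compactness supplied by monotonicity. Nothing in that argument uses the action filtration, so if it worked it would apply verbatim to the uncompleted complexes, where the conclusion fails: for $E=\mathcal{O}(-1)\to\CC P^1$ with $V$ a small disc bundle and $L$ the monotone torus at radius $\frac{1}{\sqrt{\pi}}$ (so $L\subset W$), the uncompleted map $\mathfrak{c}^*\colon SH_*(V)\to SH^*(E)$ is an isomorphism (this is exactly what the computation of $(c^{-1})^*$ in Section \ref{example} shows, and it is insensitive to the radius of $V$), and composing with the unital, nonzero closed-open map to $HF^*(L,E_\gamma)$ gives a nonzero composite. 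So disjointness of $L$ from $V$ plus compactness cannot be the reason the composite dies; the actual reason in the paper is quantitative. The Hamiltonians in $Ad_-(V,M)$ that build $\widehat{SC_*}(V;\Lambda)$ have slopes $\tau_i\to-\infty$, hence take arbitrarily negative values on the region of $W$ containing $L$; the energy/action estimate (\ref{action-est}) then shows that for any fixed window $(a,b)$ one can choose $i$ with $\sup_{L}H^{\tau_i}_0<a-b$, forcing $(\iota^{\tau_i}_{(a,b)})^*([\eta_i])=0$, and this vanishing is propagated through all indices by the compatibility $(\iota^{\tau_i}_{(a,b)}\circ c^{i-1})^*=(\iota^{\tau_{i-1}}_{(a,b)})^*$ of Lemma \ref{lem:descent}. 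In other words the vanishing only holds per action window, and it is the completion (assembled via the naturality of the Milnor $\lim^1$ sequences) that turns it into vanishing of $\widehat{\cg{I}^*}\circ\mathfrak{i}^*$. Note also that the paper never needs a chain-level nullhomotopy or an extension of the closed-open map over the cone: vanishing of the composite on homology already lets $\widehat{\cg{I}^*}$ factor through $\widehat{SH^*}(M;\Lambda)/\mathrm{im}(\mathfrak{i}^*)\cong Im(q^*)\subset\widehat{SH^*}(W;\Lambda)$, which is all that is required.

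A secondary issue: you invoke the unit $e_M$ of $\widehat{SH^*}(M;\Lambda)$ and a unit axiom for the completed closed-open map, neither of which is established here (the completed theory is defined through inverse limits and carries a potential $\lim^1$ term, so a ring structure is not free). The paper sidesteps this by producing an explicit PSS-type cycle $Z\in CF^*(H^{\tau_0};\Lambda)$ with $(\iota^{\tau_0})^*([Z])=[\mathfrak{m}]$, the unit of $HF^*(L,E_\gamma)$ represented by the unique minimum of the Morse function (Lemma \ref{lem:nonvanish}), and by routing through the surjection $\Psi$ coming from the Milnor exact sequence; the hypothesis $HF^*(L,E_\gamma)\neq 0$ is what guarantees $[\mathfrak{m}]\neq 0$. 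If you want to salvage your outline, replace the claimed geometric nullhomotopy by the action-window vanishing argument above, and replace the appeal to $e_M$ by such an explicit cycle.
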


In view of Theorem \ref{thm:nonvanish}, we conjecture that Theorem \ref{prop:example} generalizes to Liouville cobordisms between sphere subbundles of line bundles $\cg{O}(-k)\longrightarrow\CC P^m$, where $1\leq k \leq m$.  
\begin{restatable}{conjecture}{conjannuli}
\label{conj:annuli}
Suppose that $W$ is an annulus subbundle in $Tot(\cg{O}(-k)\longrightarrow\CC P^m)$ between two sphere bundles of radii $R_1$ and $R_2$, with normalized symplectic form.  Then
\[
\widehat{SH^*}(W; \Lambda)\cong\left\{
\begin{array}{cc}
 \bigslant{\Lambda[z]}{\left(1 - (-k)^kTz^{-1-m+k}\right)} & R_1 \leq  \frac{1}{\sqrt{\pi(1+m-k)}} \leq R_2 \\
0 & \text{otherwise}
\end{array}\right..
\]
\end{restatable}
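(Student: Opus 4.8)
\medskip
\noindent\textbf{Proof proposal.} The plan is to deduce the conjecture from the long-exact sequence of Theorem~\ref{thm:les}, extending the argument for Theorem~\ref{prop:example}. Let $D_R\subset E := Tot(\cg{O}(-k)\longrightarrow\CC P^m)$ denote the disk subbundle of radius $R$, set $V=D_{R_1}$ and $M=D_{R_2}$, so that $W=M\setminus V$, and write $\rho:=1/\sqrt{\pi(1+m-k)}$. This $\rho$ is exactly the radius at which $D_\rho$ becomes a monotone symplectic domain and at which $\partial D_\rho$ carries the monotone, Floer-essential Lagrangian torus constructed by Ritter--Smith \cite{ritter-s} (the higher-dimensional analogue of Smith's torus in $Tot(\cg{O}(-1)\longrightarrow\CC P^1)$ \cite{smith}). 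Theorem~\ref{thm:les} then reads
\[
\cdots\longrightarrow\widehat{SH_i}(D_{R_1};\Lambda)\longrightarrow\widehat{SH^i}(D_{R_2};\Lambda)\longrightarrow\widehat{SH^i}(W;\Lambda)\longrightarrow\widehat{SH_{i+1}}(D_{R_1};\Lambda)\longrightarrow\cdots,
\]
so the problem reduces to computing the completed (co)homology of the disk subbundles together with the maps between them.

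The core of the argument is the computation of $\widehat{SH^*}(D_R;\Lambda)$ and $\widehat{SH_*}(D_R;\Lambda)$ as $R$ crosses $\rho$, carried out at the level of the completed chain complex (the uncompleted theories are $R$-independent and so cannot detect this). For $R\ge\rho$ I would show that the completed cochain complex of $D_R$ has homology a localized quantum cohomology of the zero section: organising the relevant holomorphic curve counts as in Ritter's Gromov--Witten description of symplectic cohomology of negative line bundles, the completion resurrects exactly the generalized eigenspace of quantum multiplication by (a fixed multiple of) $c_1$ on $QH^*(\CC P^m;\Lambda)$ whose eigenvalues have the valuation prescribed by the monotone radius $\rho$; this eigenspace has total dimension $1+m-k$, and after identifying a distinguished invertible generator $z$ (of degree $2$) and evaluating the single surviving relation one obtains the graded ring $\Lambda[z]/\!\left(1-(-k)^kTz^{-1-m+k}\right)$. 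Under closed-string mirror symmetry this ring is the coordinate ring of the part of the critical locus of the Landau--Ginzburg mirror of $E$ lying over the valuation-$\rho$ stratum of the base, which is how the heuristic of the paper motivates the formula. The answer is moreover independent of $R\ge\rho$, the inclusion $D_\rho\hookrightarrow D_R$ inducing an isomorphism, since all holomorphic spheres and all Reeb orbits surviving the completion already occur inside $D_\rho$. For $R<\rho$ I would instead show $\widehat{SH^*}(D_R;\Lambda)=0$ and $\widehat{SH_*}(D_R;\Lambda)=0$: the domain is smaller than the monotone radius, so an action-rescaling and cofinality argument shows that the maps in the (co)limit system computing the completed invariant are eventually multiplication by unbounded powers of the Novikov variable, whose limit is annihilated by the completion. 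A subtlety to respect is the asymmetry between the homology and the cohomology completions: $\widehat{SH_*}(D_R;\Lambda)$ must vanish for all $R\le\rho$ — in particular at $R=\rho$, where $\widehat{SH^*}(D_\rho;\Lambda)\ne 0$ — and agree with the cohomology computation only for $R>\rho$.

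The final step is assembly through the long-exact sequence. If $R_2<\rho$, both outer terms vanish, so $\widehat{SH^*}(W;\Lambda)=0$. If $R_1\le\rho\le R_2$, then $\widehat{SH_i}(D_{R_1};\Lambda)=0$ for every $i$ while $\widehat{SH^*}(D_{R_2};\Lambda)\cong\Lambda[z]/\!\left(1-(-k)^kTz^{-1-m+k}\right)$, and exactness forces $\widehat{SH^*}(W;\Lambda)\cong\widehat{SH^*}(D_{R_2};\Lambda)$; non-vanishing in this range also follows independently from Theorem~\ref{thm:nonvanish} applied to the Ritter--Smith torus (which has non-vanishing Floer homology for a suitable flat line bundle), a useful consistency check. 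If $\rho<R_1\le R_2$, the map $\widehat{SH_i}(D_{R_1};\Lambda)\to\widehat{SH^i}(D_{R_2};\Lambda)$ is the natural comparison map between two copies of the same localized quantum cohomology, hence an isomorphism, so $\widehat{SH^*}(W;\Lambda)=0$.

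The main obstacle, I expect, is twofold. First, the quantitative vanishing $\widehat{SH^*}(D_R;\Lambda)=0$ for $R<\rho$: this is precisely the surprising behaviour flagged after Theorem~\ref{prop:example}, and making it rigorous requires pinning down the action filtration on the completed Floer complexes of a non-monotone disk bundle finely enough to see the completed (co)limit collapse — morally a ``$\Lambda$-local displaceability'' statement for small disk subbundles of negative line bundles. Second, identifying the ring structure, and in particular the precise constant $(-k)^k$: a dimension count gives $\dim_\Lambda\widehat{SH^*}(W;\Lambda)=1+m-k$ but not the relation, so one needs a genuine Gromov--Witten input isolating the relevant quantum product on the zero section and its pairing with the fibre class, and then sufficient control on the connecting homomorphisms of Theorem~\ref{thm:les} to transport that product structure from $\widehat{SH^*}(D_{R_2};\Lambda)$ onto $\widehat{SH^*}(W;\Lambda)$.
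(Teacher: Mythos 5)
The statement you are trying to prove is stated in the paper as a \emph{conjecture}, and the paper does not prove it: it offers only (i) the non-vanishing half, which follows from Theorem \ref{thm:nonvanish} applied to the Ritter--Smith monotone torus in the radius-$\frac{1}{\sqrt{\pi(1+m-k)}}$ sphere bundle, and (ii) a closed-string mirror-symmetry heuristic (Section \ref{hms}), where the ring $\bigslant{\Lambda[z]}{\left(1-(-k)^kTz^{-1-m+k}\right)}$ arises as $Jac(\cg{W}\big|_{D_RE_{m,k}^{\vee}})$; the paper explicitly says that the vanishing half ``seems harder to obtain.'' Your proposal is a reasonable strategy outline --- it is essentially the scheme used to prove Theorem \ref{prop:example} in the $k=1$, $m=1$ case, fed into the long-exact sequence of Theorem \ref{thm:les} --- but it is not a proof, because every step that is actually open is deferred with ``I would show.''

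Concretely, three gaps remain. First, the vanishing $\widehat{SH^*}(D_R;\Lambda)=0$ for $R<\frac{1}{\sqrt{\pi(1+m-k)}}$ (and $\widehat{SH_*}(D_R;\Lambda)=0$ for $R\leq\frac{1}{\sqrt{\pi(1+m-k)}}$): in the paper's $\CC P^1$ computation this rests on an explicit description of the entire completed Floer complex (via Albers--Kang and Ritter, Figure \ref{fig:fullcomplex}) and an explicit infinite primitive such as (\ref{ob:cobound}); no analogue is supplied for general $(m,k)$, where the complex is larger and the surviving rank is $1+m-k$. Second, your treatment of the case $\frac{1}{\sqrt{\pi(1+m-k)}}<R_1$ asserts that the map $\widehat{SH_*}(D_{R_1};\Lambda)\to\widehat{SH^*}(D_{R_2};\Lambda)$ is an isomorphism because ``both sides are the same localized quantum cohomology''; abstract isomorphism of the two groups does not make this particular map an isomorphism, and in the paper's $\CC P^1$ case this step required a separate lemma identifying the continuation map $c^{-1}$ through counts of Maslov-index-two discs on the torus (the maps $\phi^{-1},\phi^0$ and the pearly-trajectory analysis), which would have to be redone over $\CC P^m$. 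Third, as you acknowledge, the ring structure and the constant $(-k)^k$ require genuine Gromov--Witten input plus compatibility of the product with the connecting maps of Theorem \ref{thm:les}, none of which is established. So the proposal is consistent with the paper's evidence and with the route one would expect, but it does not close the conjecture, and neither does the paper.
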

Indeed, by work of Ritter-Smith \cite{ritter-s}, any such cobordism containing the sphere bundle of radius $\frac{1}{\sqrt{\pi(1+m-k)}}$ satisfies the conditions of Theorem \ref{thm:nonvanish}; this generalizes the non-vanishing part of Theorem \ref{prop:example}.  A generalization of the vanishing part of Theorem \ref{prop:example} seems harder to obtain, but is strongly indicated by mirror symmetry.  We expect that the mirror of a Liouville cobordism $W$ between sphere bundles in $Tot(\cg{O}(-k)\longrightarrow\CC P^m)$ is a subset of an appropriate rigid analytic space cut out by affinoid domains, and equipped with a superpotential.  We expect that $\widehat{SH^*}(W; \Lambda)$ vanishes precisely when the critical locus of the superpotential does not intersect the mirror of $W$.  In future work we will explore the homological mirror symmetry correspondence between these objects.  In particular, using closed mirror symmetry predictions, one could hope to compute Rabinowitz Floer homology through analyzing the ring of functions and the superpotential on the mirror.

\subsection*{Outline}
This paper is organized as follows: in Section \ref{completed} we first recall Hamiltonian Floer theory and fix notation and conventions (Subsection \ref{set-up}).  We proceed to define the completed symplectic cochain complex and the completed symplectic chain complex that compute completed symplectic cohomology and homology (Subsections \ref{cohomology} and \ref{homology}).  We then warm up by defining completed Rabinowitz Floer homology (Subsection \ref{rfh}).  Finally, we define the completed symplectic cohomology of a Liouville cobordism and show that Theorem \ref{thm:les} follows as a consequence of construction (Subsection \ref{sh(w)}).  In Section \ref{non-vanish} we prove Theorem \ref{thm:nonvanish} and in Section \ref{example} we prove Theorem \ref{prop:example}.  In Section \ref{hms} we discuss closed mirror symmetry and Conjecture \ref{conj:annuli}.

\subsection*{Acknowledgements}

I thank my advisor, Mohammed Abouzaid, for his exceptional guidance.  I thank Peter Albers, Yoel Groman, Jungsoo Kang, Jo Nelson, and Paul Seidel for helpful comments and discussions.  This work was supported by NSF grant DGE-16-44869 and Simons Foundation grant ``Homological Mirror Symmetry and Applications''.

\section{Completing Floer cochains}
\label{completed}

\subsection{Hamiltonian Floer theory on monotone manifolds}
\label{set-up}
Let $M$ be a compact symplectic manifold of dimension $2n$, equipped with symplectic form $\omega$.  Under favourable conditions one can define the Floer theory of $M$ as a homology theory on the loop space $\cg{L}M$ of $M$.  In this paper we assume three conditions that, in conjunction, prove exceptionally favorable.  The first condition requires $M$ to be monotone: there exists a constant $c > 0$ satisfying
\[
c_1^{TM} = c[\omega].
\]
The second condition requires the boundary of $M$ to be contact.  Thus, there is a one-form $\lambda$ defined near the boundary of $M$ satisfying $d\lambda = \omega$, and such that $\lambda\big|_{\dd M}$ is a contact form on $\dd M$.  
The final condition requires the boundary orientation of $\dd M$ to match the contact orientation induced by $\lambda\big|_{\dd M}$.  This is equivalent to asking that the Liouville flow $X_{\lambda}$ defined by $\omega(X_{\lambda}, -) = \lambda$ points outwards along the boundary.

Given a suitable Hamiltonian function $H: M\times S^1\longrightarrow \RR$ (defined in Section \ref{cohomology}) , one can define a Floer cohomology theory as follows.  Define the set of closed orbits of $H$ to be 
\[
\cg{P}(H) = \left\{x\in\cg{C}^{\infty}(S^1, M)\hspace{.1cm}\big|\hspace{.1cm}\dot{x} = X_H(x)\right\}.
\]
Choose a basepoint $\beta$ for each connected component of $\cg{L}M$.  Then $\cg{P}(H)$ decomposes as a direct sum
\[
\cg{P}(H) = \bigoplus_{[\beta]} \cg{P}_{\beta}(H), 
\]
where $\cg{P}_{\beta}(H) = \left\{x\in\cg{P}(H)\big| [x] = [\beta]\right\}$.
For each $x\in \cg{P}_{\beta}(H)$ choose a path $\tilde{x}$ from $x$ to $\beta$.

Fix a coefficient ring $\KK$.
Define a ring over a formal variable $T$ by
\[
\Gamma = \left\{\sum_{j=0}^n a_jT^{k_j}\bigg| a_j\in\KK, k_j\in\RR, n\in\NN\right\}.
\]

We will define the structure of a cochain complex on the set $\Gamma\left\la\cg{P}(H)\right\ra$.  Let $o_x$ be the orientation line associated to $x$ (see Subsection 1.4 in \cite{abouzaid} for a detailed account of orientation lines).  Define
\begin{equation}
\label{complex-orientation}
CF^*(H; \Gamma) = \bigoplus_{x\in\cg{P}(H)}\Gamma\otimes_{\ZZ} o_x.
\end{equation}
The Conley-Zehnder index $\mu_{CZ}$ gives $\cg{P}(H)$ a well-defined $\ZZ/2\ZZ$ - grading, and we grade $\Gamma$ trivially by setting $|T| = 0$.  Elements $\zeta_x\in o_x$, for $x\in\cg{P}(H)$, are then graded by $|\zeta_x| = \mu_{CZ}(x)\in\ZZ/2\ZZ$.  If $\KK = \ZZ/2\ZZ$ one can replace each orientation line $o_x$ in (\ref{complex-orientation}) by the corresponding periodic orbit $x$.  See \cite{salamon} for details on the Conley-Zehnder index.

The negative flow of $X_{\lambda}$ defines a collar neighborhood $[-\epsilon, 0]\times\dd M$ of the boundary of $M$, on which $\omega_{(r, x)} = e^rdr\wedge\lambda_x + e^rd\lambda_x$ (where $r$ is the coordinate on $[-\epsilon, 0]$).  Let $J$ be an almost-complex structure on $M$ that is {\it cylindrical} on the collar neighborhood of $\dd M$.  Recall that a cylindrical almost-complex structure satisfies
\[
e^rdr = J^*\lambda.
\]
We will always choose our cylindrical almost-complex structures to be $\omega$-compatible.  

Let $w:\RR\times S^1\longrightarrow M$ satisfy Floer's equation
\begin{equation}
\label{eq:floer}
\frac{\dd w}{\dd s} + J\left(\frac{\dd w}{\dd t} - X_H\right) = 0,
\end{equation}
where the cylinder $\RR\times S^1$ has coordinates $(s, t)$.  Associated to such maps is the {\it energy}, defined by
\begin{equation}
\label{eq: energy}
E(w) = \frac{1}{2}\int_{\RR\times S^1} ||dw - X_H\otimes dt||^2ds\wedge dt.
\end{equation}
If the energy of $w$ is finite, $w(s, \cdot)$ converges asymptotically in $s$ to periodic orbits of $X_H$.  For any two periodic orbits $x_-$ and $x_+$, define $\cg{M}^0(x_-, x_+)$ to be the space of rigid solutions $w(s, t)$ of (\ref{eq:floer}) satisfying $\lim\limits_{s\rightarrow \pm\infty}u(s, \cdot) = x_{\pm}(\cdot)$.  Each such $w$ has an $\RR$-action of translation in the $s$-direction, and modding out by this $\RR$-action produces a compact zero-dimensional moduli space $\hat{\cg{M}}^0(x_-, x_+)$.  Each $w$ further induces an isomorphism $d_w:o_{x_+}\longrightarrow o_{x_-}$ of orientation lines (see Lemma 1.5.4 of \cite{abouzaid}).  We denote by $-\widetilde{x_+}\#w\#\widetilde{x_-}$ the element of $\pi_2(M)$ formed by gluing $w$ to $\widetilde{x_-}$ and $\widetilde{x_+}$, the latter with reversed orientation.  For $A\in\pi_2(M)$, we will use the shorthand
\[
\omega(A) := \int_{S^2} A^*\omega.
\]
Equip $CF^*(H; \Gamma)$ with the differential $\dd^{fl}$ given on generators by
\[
\dd^{fl}\big|_{o_{x_+}} = \sum_{\substack{x_-\in\cg{P}(H)\\ w\in\hat{\cg{M}}^0(x_-, x_+)}} T^{\omega(-\widetilde{x_+}\#w\#\widetilde{x_-})}\cdot d_w.
\]
As $M$ is monotone, $\dd^{fl}$ is well-defined.  Extending the differential $T$-linearly yields the cochain complex $CF^*(H; \Gamma)$. 

Define an action functional $\cg{A}_H$ on $\left\{ T^{\alpha}x\right\}_{\alpha\in\RR, x\in\cg{P}(H)}$ by
\begin{align*}
\cg{A}_H(T^{\alpha}x) = \alpha - \int_{S^1\times[0,1]}\tilde{x}^*\omega + \int_{S^1}H(x(t)) dt
\end{align*}
and set $\cg{A}_H(T^{\alpha}\zeta_x) = \cg{A}_H(T^{\alpha}x)$ for any $\zeta_x\in o_x$.
A standard computation shows that the differential increases $\cg{A}_H$.  Thus, the subsets
\[
CF^*_a(H; \Gamma) := \KK\left\la\left\{T^{\alpha}\zeta_x\hspace{.1cm}\big|\hspace{.1cm} \alpha\in\RR; \hspace{.05cm} \zeta_x\in o_x; \hspace{.05cm} x\in\cg{P}(H); \hspace{.05cm} \cg{A}_H(T^{\alpha}\zeta_x) > a\right\}\right\ra
\]
are subcomplexes and form a filtration of $CF^*(H;\Gamma)$.

For $a < b$ define the quotient complex
\[
CF^*_{(a, b)}(H; \Gamma) := \bigslant{CF^*_a(H; \Gamma)}{CF^*_b(H; \Gamma)}.
\]
There are natural chain maps 
\begin{equation}
CF^*_{(a, b)}(H; \Gamma)\hookrightarrow CF^*_{(a', b)}(H;\Gamma) \text{ and } CF^*_{(a, b)}(H; \Gamma)\twoheadrightarrow CF^*_{(a, b')}(H;\Gamma)
\label{eq:filtration}
\end{equation}
whenever $a' \leq a$ or $b' \leq b$, given by, respectively, inclusion and projection.  Following the example of \cite{cieliebak-f}, we will use this quotient complex and the natural maps of (\ref{eq:filtration}) to define a Novikov-type completion of different Floer homology theories on open manifolds.

\begin{remark}
The $(a, b)$-filtered complex is independent of lifts $x\mapsto\tilde{x}$, as choosing a different lift corresponds to rescaling $x$ by some power of $T$.
\end{remark}

\subsection{Symplectic cohomology}
\label{cohomology}
Hamiltonian Floer theory is not invariant under choice of Hamiltonian when working on manifolds with boundary.  To rectify this, one usually take a colimit over the Floer homologies of all suitable Hamiltonians.  The resulting homology theory captures information about the singular cohomology of $M$ and the positively-traversed Reeb orbits of various contact hypersurfaces in the conical completion of $M$.

We will define the colimit over a smaller class of Hamiltonians than is usual in the literature; in particular, we will require that the Reeb orbits captured in our cohomology theory cluster near $\dd M$.  This will define a Floer cohomology of $M$ (as opposed to its conical completion) that we will show displays, under completion-by-action, surprising behavior.

Choose $\epsilon_M > 0$.  The Liouville flow near the boundary of $M$ enables us to smoothly attach $[0, \epsilon_M)\times \dd M$ to $M$ via $\dd M$.  Define the enlarged manifold 
\[
\widetilde{M} = M\cup_{\dd M}[0, \epsilon_M)\times \dd M.
\]  
Choose any $\cg{C}^2$-small function $\cg{H}: M\times S^1\longrightarrow\RR$ with non-degenerate, constant time-one orbits.  Choose a sequence $\{\epsilon_n\}_{n\in\NN}$ that is monotone decreasing, bounded above by $\epsilon_M$, and converges to $0$.  Choose a family of Hamiltonians $Ad(M) = \{H^{\tau_i}:\widetilde{M}\times S^1\longrightarrow\RR\}_{i\in\ZZ}$, that we call {\it admissible}, such that 
\begin{enumerate}
\item $H^{\tau_i}\big|_{M\times S^1} = \cg{H}$ for all $i$,
\item $H^{\tau_i}\geq H^{\tau_j}$ whenever $i \geq j$,
\item $H^{\tau_i} = h^{\tau_i}(e^r)$ on $[0, \epsilon_M)\times \dd M$ for some function $h^{\tau_i}:\RR_+\longrightarrow\RR$,
\item $h^{\tau_i}$ is linear of slope $\tau_i$ on $(\epsilon_{|i|}, \epsilon_M)$\label{cond:noescape}, 
\item $\tau_i > 0$ if and only if $i \geq 0$, 
\item $|H^{\tau_i}|$ is universally bounded on one-periodic orbits, and
\item the one-periodic orbits of $H^{\tau_i}$ are transversely non-degenerate.
\end{enumerate}
Finally, require that $\tau_0$ be smaller than the smallest period of a positive Reeb orbit on $\dd M$.  See Figure \ref{fig:rfh} for a cartoon of the elements of $Ad(M)$.

\begin{figure}[htbp!]
\begin{center}
\includegraphics[scale=.4]{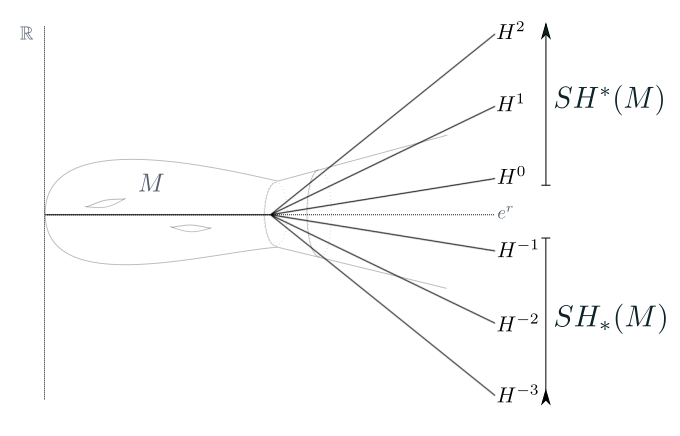}
\end{center}
\caption{The family of Hamiltonians defining completed symplectic homology and cohomology, and completed Rabinowitz Floer cohomology}
\label{fig:rfh}
\end{figure}

Define $Ad_+(M)$ to be the non-negatively-indexed Hamiltonians and $Ad_-(M)$ to be the negatively-indexed Hamiltonians.

\begin{remark}
Instead of attaching $[0, \epsilon_M)\times\dd M$ to $M$, we could have attached the entire positive symplectization $[0, \infty)\times \dd M$, and extended each $H^{\tau_i}$ linearly to define elements of $Ad(M)$ on this completed manifold.  The Floer theory of $H^{\tau_i}$ is well-defined in this setting.  Condition (\ref{cond:noescape}) and the maximum principle ensure that Floer trajectories of $H^{\tau_i}$ of finite energy, in particular the trajectories used to define the differential, do not exit $\widetilde{M}$.  All of the data used to define $CF^*(H^{\tau_i}; \Gamma)$ therefore lives in $\widetilde{M}$, and so we can ``do Floer theory'' on $\widetilde{M}$ instead of on the completed manifold.  In this paper we will only define Floer theory on manifolds of the form $\widetilde{M}$, and never on the full completed manifold.
\end{remark}

There are continuation maps $c^i:CF^*(H^{\tau_i}; \Gamma)\longrightarrow CF^*(H^{\tau_{i+1}}; \Gamma)$ for each $i$.  Again by Condition (\ref{cond:noescape}) and the maximum principle each $c^i$ is well-defined.  These maps may be chosen to respect the action filtration, thereby inducing continuation maps 
\[
c^i:CF^*_{(a, b)}(H^{\tau_i}; \Gamma)\longrightarrow CF^*_{(a, b)}(H^{\tau_{i+1}}; \Gamma).
\]
This leads to a directed system
\[
...\xrightarrow{c^{-2}} CF^*_{(a, b)}(H^{\tau_{-1}}; \Gamma) \xrightarrow{c^{-1}} CF^*_{(a, b)}(H^{\tau_{0}}; \Gamma) \xrightarrow{c^{0}} CF^*_{(a, b)}(H^{\tau_{1}}; \Gamma) \xrightarrow{c^1}...
\]
The non-negatively-indexed continuation maps induce a chain map
\[
\{c^i - id\}:\bigoplus_{i=0}^{\infty} CF^*_{(a, b)}(H^{\tau_i};\Gamma)\longrightarrow\bigoplus_{i=0}^{\infty} CF^*_{(a, b)}(H^{\tau_i};\Gamma)
\]
defined componentwise.  The cone of this map is a cochain complex 
\[SC^*_{(a, b)}(M; \Gamma) := \bigoplus_{i=0}^{\infty}CF^*_{(a, b)}(H^{\tau_i};\Gamma)\oplus\bigoplus_{i=0}^{\infty}CF^*_{(a, b)}(H^{\tau_i};\Gamma)[1]
\]
with differential given by
\[
\delta^* = \left\{\left(\begin{array}{cc} \dd^{fl} & c^i - id  \\  0& \dd^{fl}[1]\end{array}\right)\right\}.
\]

For ease of notation let ${\bm \theta}$ be a formal variable of degree $|{\bm \theta}| = -1$ satisfying ${\bm \theta}^2 = 0.$  Rewrite the symplectic chain complex as
\[
SC^*_{(a, b)}(M;\Gamma) = \bigoplus_{i=0}^{\infty} CF^*_{(a, b)}(H^{\tau_i};\Gamma)[{\bm \theta}]
\]
with differential 
\begin{equation}
\delta^*\big|_{o_x + o_y{\bm \theta}} = \dd^{fl}\big|_{o_x} + c^i\big|_{o_y} - id_{o_y} + (\dd^{fl}\big|_{o_y}){\bm \theta}.
\label{eq:diff}
\end{equation}
The maps between filtered chain complexes in equation (\ref{eq:filtration}) extend componentwise to chain maps
\[
SC^*_{(a, b)}(M; \Gamma)\hookrightarrow SC^*_{(a', b)}(M;\Gamma) \text{ and } SC^*_{(a, b)}(M; \Gamma)\twoheadrightarrow SC^*_{(a, b')}(M;\Gamma)
\]
that defines a bi-directed system.  Define the {\it completed symplectic cochains} to be the limit over this bi-directed system, and denote it by
\[
\widehat{SC^*}(M; \Gamma) = \lim_{\substack{\longrightarrow \\ a}}\lim_{\substack{\longleftarrow \\ b}}SC^*_{(a, b)}(M;\Gamma).
\]
Note that, under our conventions, the limits take $a$ to negative infinity and $b$ to positive infinity.

\begin{remark}
By Theorem 5.6 in \cite{frei-m}, taking the limits in the opposite order creates an isomorphic complex.  (Also see \cite{cieliebak-f} for an application of this theorem to Morse Theory.)
\label{rem:order}
\end{remark}

Completed symplectic cohomology is the homology of this complex, and is denoted by $\widehat{SH^*}(M; \Gamma)$.

\begin{remark}
\label{rmk:novikov}
We can write the elements of $\widehat{SC^*}(M; \Gamma)$ directly as $a + b{\bm \theta}$, where $a$ and $b$ are sums of the form
\[
\left\{\sum_{j=0}^{\infty}a_jT^{k_j}\cdot \zeta_j\hspace{.05cm}\bigg|\hspace{.05cm} a_j\in\KK; \hspace{.05cm}k_j \in \RR;\hspace{.05cm}\zeta_j\in o_{x_j}\text{ for some } x_j\in\bigcup_{H\in Ad(M)}\cg{P}(H);\hspace{.05cm} \lim_{j\rightarrow\infty}-\omega(\tilde{x_j}) + k_j = \infty\right\}.
\]
Thus, the completed symplectic cochain complex agrees with the complex formed by taking a Novikov-type completion.  In particular, it is a module over the {\it universal Novikov ring over $\KK$}, defined by
\begin{equation}
\label{eq:novikov}
\Lambda := \left\{\sum_{j=1}^{\infty} a_jT^{k_j}\bigg| a_j\in\KK; k_j\in \RR; \lim_{j\rightarrow\infty} k_j = \infty\right\}.
\end{equation}
Since the differential respects the $\Lambda$ action, we will henceforth take coefficients of completed complexes in $\Lambda$, working with $\widehat{SC^*}(M; \Lambda) := \widehat{SC^*}(M; \Gamma)$.
\end{remark}

\subsection{Symplectic homology}
\label{homology}

Symplectic homology is defined analogously to symplectic cohomology.  The negative continuation maps induce a chain map
\[
\{c^i - id\}:\prod_{i=-1}^{-\infty}CF^*_{(a, b)}(H^{\tau_i};\Gamma)\longrightarrow \prod_{i=-1}^{-\infty}CF^*_{(a, b)}(H^{\tau_i};\Gamma).
\]
Define the $(a, b)$-truncated symplectic chains to be
\[
SC_*^{(a, b)} := \prod_{i=-1}^{-\infty} CF^*_{(a, b)}(H^{\tau_i};\Gamma)[{\bm \theta}],
\]
with differential $\delta^*$ as in (\ref{eq:diff}).

Akin to Section \ref{cohomology}, the {\it completed symplectic chains} are defined to be
\[
\widehat{SC_*}(M; \Lambda) := \lim_{\substack{\longrightarrow \\ a}}\lim_{\substack{\longleftarrow \\ b}}SC_*^{(a, b)}(M; \Gamma).
\]
Completed symplectic homology is the homology of this complex, and is denoted by $\widehat{SH_*}(M; \Lambda)$.

\begin{remark}
By Poincar\'e duality, there is a chain isomorphism
\[
SC_*^{(a, b)}(M; \Gamma) \cong \prod_{i=-1}^{-\infty}CF_{-*}^{(-b, -a)}(-H^{\tau_i};\Gamma)[{\bm \zeta}],
\]
where $|{\bm \zeta}| = 1$ and ${\bm \zeta}^2 = 0$.

This implies the isomorphism
\[
\widehat{SC_*}(M; \Lambda)\cong\left(\lim_{\substack{\longrightarrow \\ a}}\lim_{\substack{\longleftarrow \\ b}}\prod_{i=-1}^{-\infty}CF_{-*}^{(-b, -a)}(-H^{\tau_i};\Gamma)[{\bm \zeta}]\right).
\]
In particular, $\widehat{SC_*}(M; \Lambda)$ is chain-isomorphic to the dual complex of the completed symplectic cochain complex (after a shift in grading), and is thereby deserving of its name, despite the cohomological conventions used to define it.
\end{remark}

\subsection{Rabinowitz Floer cohomology}
\label{rfh}

There is a map from $(a, b)$-truncated symplectic homology to $(a, b)$-truncated symplectic cohomology, given on chains by projecting onto $CF^*_{(a, b)}(H^{\tau_{-1}}; \Gamma){\bm \theta}$, applying the continuation map $c^{-1}$, and then including.  Call this map $\mathfrak{c}$.

\[
\begin{tikzcd}
SC_*^{(a, b)}(M; \Gamma) \arrow{r}{\mathfrak{c}} \arrow[twoheadrightarrow]{d}{\pi} & SC^*_{(a, b)}(M; \Gamma) \\ CF^*_{(a, b)}(H^{\tau_{-1}}; \Gamma){\bm \theta} \arrow{r}{c^{-1}} & CF^*_{(a, b)}(H^{\tau_0}; \Gamma) \arrow[hookrightarrow]{u}{\iota}
\end{tikzcd}
\]
\begin{remark}
One does not need to truncate by action; on monotone and exact domains the map $\mathfrak{c}$ extends to a map on the full complexes $SC_*(M)\longrightarrow SC^*(M)$.  It was shown in \cite{cieliebak-f-o} that the Rabinowitz Floer homology of the contact boundary of a Liouville domain is the cone of the induced map $\mathfrak{c}^*: SH_*(M)\longrightarrow SH^*(M)$.  This motivates the following definition.
\end{remark}
\begin{definition}
Define the $(a, b)$-truncated Rabinowitz Floer cochain complex $RFC^*_{(a, b)}(M)$ to be the cone of $\mathfrak{c}$:
\[
RFC^*_{(a, b)}(M):= \left(SC^*_{(a, b)}(M)\oplus SC_*^{(a, b)}(M)[1], \left(\begin{array}{cc} \delta^* & \mathfrak{c} \\ 0 & \delta^*[1]\end{array}\right)\right).
\]
\end{definition}
There is a triangle
\begin{equation}
\begin{tikzcd}
SC_*^{(a, b)}(M; \Gamma) \arrow{rr}{\mathfrak{c}} && SC^*_{(a, b)}(M; \Gamma) \arrow{dl} \\ & RFC^*_{(a, b)}(M; \Gamma) \arrow{ul}{[-1]}
\end{tikzcd}
\label{diag:exact-trunc}
\end{equation}
The inverse limit $\lim\limits_{\substack{\longleftarrow \\ b}}$ is exact (the Mittag-Leffler condition is easily satisfied via surjection of the projection maps defining the limit).  Clearly the limit $\lim\limits_{\substack{\longrightarrow \\ a}}$ is exact.  Applying the action-window limits to the triangle (\ref{diag:exact-trunc}) creates a triangle of completed complexes.
\begin{equation}
\begin{tikzcd}
\widehat{SC_*}(M; \Lambda) \arrow{rr}{\mathfrak{c}} && \widehat{SC^*}(M; \Lambda) \arrow{dl} \\ & \lim\limits_{\substack{\longrightarrow \\ a}}\lim\limits_{\substack{\longleftarrow \\b}}RFC^*_{(a, b)}(M; \Gamma) \arrow{ul}{[-1]}
\end{tikzcd}
\label{diag:exact-com}
\end{equation}

\begin{definition}
The {\it completed Rabinowitz Floer cochain complex} is 
\[
\widehat{RFC^*}(M; \Lambda) := \lim\limits_{\substack{\longrightarrow \\ a}}\lim\limits_{\substack{\longleftarrow \\b}}RFC^*_{(a, b)}(M; \Gamma)
\]  
Its homology is denoted by $\widehat{RFH^*}(M; \Lambda)$.
\end{definition}  

Note that applying homology to (\ref{diag:exact-com}) yields the exact sequence
\begin{equation}
...\longrightarrow \widehat{SH_i}(M; \Lambda)\longrightarrow \widehat{SH^i}(M; \Lambda)\longrightarrow \widehat{RFH^i}(M; \Lambda)\longrightarrow \widehat{SH_{i+1}}(M; \Lambda)\longrightarrow...
\label{diag:les-rfh}
\end{equation}

\begin{remark}
While we abuse language in calling our construction ``completed Rabinowitz Floer homology'', we expect that $\widehat{RFH^*}(M; \Lambda)$ is, after a degree adjustment, isomorphic to the Rabinowitz Floer homology found in the literature (defined for sphere bundles in negative line bundles).  \cite{albers-k}, \cite{cieliebak-f-o}, \cite{frauenfelder}.
\end{remark}

\subsection{Symplectic cohomology of a Liouville cobordism}
\label{sh(w)}

A {\it Liouville cobordism} is an exact symplectic manifold $(W, \omega = d\lambda)$ with contact boundary $(\dd W, \alpha = \lambda\big|_{\dd W})$.  If the boundary orientation of a component $B\subset \dd W$ agrees with the orientation induced by $\alpha\big|_B$, we call $B$ a {\it positive boundary component}.  If the two orientations disagree, we say that $B$ is a {\it negative boundary component}.  In general, $\dd W$ decomposes as the union of the positive boundary components $(\dd_+W, \alpha_+ = \alpha\big|_{\dd_+W})$ and negative boundary components$(\dd_-W, \alpha_- = \alpha\big|_{\dd_-W})$.

Suppose that $M$ decomposes as the union of a Liouville cobordism $W$ and a compact, monotone symplectic manifold $V$, glued along the boundary of $V$ and the negative boundary of $W$.  We will show that the map
\[
\widehat{SH_*}(M; \Lambda)\xlongrightarrow{\mathfrak{c}^*}\widehat{SH^*}(M; \Lambda)
\] 
generalizes to a map 
\[
\widehat{SH_*}(V; \Lambda)\longrightarrow\widehat{SH^*}(M; \Lambda),
\] 
and we will define the completed symplectic cohomology of $W$ analogously to completed Rabinowitz Floer cohomology.  We first fix notation and technical conventions.

As in the previous sections, we will work over $\widetilde{M} = M\cup[0, \epsilon_M)\times\dd M$.  If the flow $\Phi_{X_{\lambda}}^t(x)$ of $X_{\lambda}$ is defined for all $t\in(T_1, T_2)$ and $x\in\dd_{\pm}W$, we identify the subdomain
\[
\left\{\Phi_{X_{\lambda}}^t(x)\big| t\in(T_1, T_2), x\in \dd_{\pm}W\right\}
\]
with the subspace $(T_1, T_2)\times\dd_{\pm}W$ of the symplectization of $\dd_{\pm}W$.  Let $r$ be the coordinate on $(T_1, T_2)$ and $x$ the coordinate on $\dd_{\pm}W$.  Under this identification, $\lambda_{r, x} = e^r(\alpha_{\pm})_x$.  Fix $R > 0$ such that $\Phi_{X_{\lambda}}$ is defined on $(-R, R)\times\dd_-W$ and $(-R, \epsilon_M)\times\dd_+W$, and
\[
\left\{(-R, R)\times\dd_-W\right\} \cap \left\{(-R, \epsilon_M)\times\dd_+W\right\} = \emptyset.
\]
Let $W_+ = \left(W\cup[0, \epsilon_M)\times\dd_+W\right)\setminus[0, R)\times\dd_-W$ 


In the previous section we considered the set of admissible Hamiltonians $Ad(M) = Ad_+(M)\sqcup Ad_-(M)$.  Leave the subfamily $Ad_+(M)$ unchanged and redefine $Ad_-(M)$ as follows.  Choose $\epsilon_V\in(0, R)$.  Let $\{\epsilon_i\}_{i\in\ZZ_{< 0}}$ be a monotone decreasing sequence bounded above by $R$ and converging to $\epsilon_V$.
Choose transversely non-degenerate Hamiltonians inductively by requiring that $H^{\tau_i}$ satisfying the following conditions.
\begin{enumerate}
\item $H^{\tau_i}\big|_V = \cg{H}\big|_V$.  To simplify later computations, assume $\cg{H}\big|_{\dd V} = 0$.
\item There exists $h^{\tau_i}:\RR\longrightarrow\RR$ such that $H^{\tau_i}(r, x) = h^{\tau_i}(e^r)$ on $[0, R)\times\dd_-W$.
\item $H^{\tau_i}$ is convex on $(\epsilon_i, R)\times\dd_-W$ and concave on $(0, \epsilon_i)\times\dd_-W$ (adjust $\cg{H}$ if necessary).
\item $h^{\tau_i}$ is linear of slope $\tau_i$ on $\dd_-W\times[\epsilon_i - \epsilon_V, \epsilon_i]$,
\item After shifting by a constant, $H^{\tau_i}\big|_{\left((\epsilon_{i+1}, R)\times\dd_-W\right)\cup W_+} = H^{\tau_{i+1}}\big|_{\left((\epsilon_{i+1}, R)\times\dd_-W\right)\cup W_+}$.  In particular, $H^{\tau_i}\big|_{W_+} = \cg{H}\big|_{W_+}$.
\item $H^{\tau_{i+1}}\geq H^{\tau_i}$ everywhere.
\end{enumerate}
We denote the set of such Hamiltonians by $Ad_-(V, M)$ and let $Ad(V, M) = Ad_+(M)\sqcup Ad_-(V, M)$.  See Figure \ref{fig:cobord-func} for a cartoon.

\begin{remark}
Solutions of $\dot{x} = X_{H^{\tau_i}}(x)$ are partitioned by whether or not they live in \\$V\cup\left([0, \epsilon_V]\times\dd_-W\right)$.  We will see that conditions (2) -- (5) ensure that solutions living ''close to $V$'' form a subcomplex of the Floer cochain complex of $H^{\tau_i}$ and that this subcomplex computes the symplectic homology of $V$.  Conditions (5) and (6) enable continuation maps to respect these subcomplexes, and condition (1) bounds the action of constant orbits, so that they are all eventually accounted for under completion by action.
\end{remark}

\begin{figure}[htbp!]
\begin{center}
\includegraphics[scale=.5]{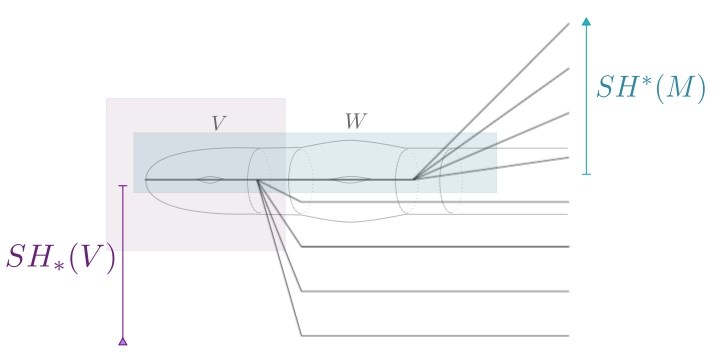}
\end{center}
\caption{The family of Hamiltonians $Ad(V, M)$ used to define the completed symplectic cohomology of a Liouville cobordism $W$ with monotone filling $V$.}
\label{fig:cobord-func}
\end{figure}

To see these conditions in play, let
\[
CF^*_{V, (a, b)}(H^{\tau_i}; \Gamma) = \left\la T^{\alpha}\zeta_x\in CF^*_{(a, b)}(H^{\tau_i};\Gamma)\hspace{.25cm}\bigg|\hspace{.25cm} \zeta_x\in o_x; \hspace{.1cm} x \subset V\cup [0, \epsilon_V]\times\dd_-W\right\ra.
\]
\begin{lemma}
For $H^{\tau_i}\in Ad_-(V, M)$, the subset $CF^*_{V, (a, b)}(H^{\tau_i}; \Gamma)$ is a subcomplex of $CF^*_{(a, b)}(H^{\tau_i};\Gamma)$.
\label{lem:subcomplex}
\end{lemma}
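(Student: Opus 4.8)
The plan is to show that the Floer differential $\dd^{fl}$ on $CF^*(H^{\tau_i};\Gamma)$ cannot decrease the ``distance from $V$'': if $x_+$ lies in $V\cup[0,\epsilon_V]\times\dd_-W$ and $w\in\hat{\cg{M}}^0(x_-,x_+)$ is a rigid Floer trajectory, then $x_-$ must also lie in $V\cup[0,\epsilon_V]\times\dd_-W$. Since the differential is the $T$-linear extension of a sum over such trajectories, this confinement statement immediately gives that $CF^*_{V,(a,b)}(H^{\tau_i};\Gamma)$ is closed under $\dd^{fl}$, and it is manifestly closed under the $\Gamma$- (and hence $\Lambda$-) action and the action-window truncation, so it is a subcomplex. (Recall $\dd^{fl}$ increases the action functional $\cg{A}_{H^{\tau_i}}$, so compatibility with the $(a,b)$-truncation is automatic; the only content is the geometric confinement.)

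The mechanism for confinement is a maximum-principle / no-escape argument on the region $[0,R)\times\dd_-W$, exactly of the type already invoked for Condition (\ref{cond:noescape}) in the two remarks about working on $\widetilde M$. Concretely: first I would observe that by Condition (3) the Hamiltonian $H^{\tau_i}$ is concave on $(0,\epsilon_i)\times\dd_-W$ and convex on $(\epsilon_i,R)\times\dd_-W$, so the relevant ``barrier'' hypersurfaces are the level sets $\{r = c\}$. On the concave region near $\dd_-W$ (i.e. close to $V$), a solution of Floer's equation that touches a maximal $r$-level must be constant in the $r$-direction by the standard convexity argument applied to the function $r\circ w$ on the cylinder — one computes $\Delta(r\circ w)$ using (\ref{eq:floer}) and the cylindrical form of $\omega$ and $J$, and the sign of the second derivative of the profile function $h^{\tau_i}$ forces the subharmonicity needed for the maximum principle. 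Thus a trajectory asymptotic at $s\to+\infty$ to an orbit $x_+$ inside $V\cup[0,\epsilon_V]\times\dd_-W$ (all orbits in the concave and $V$ regions have $r\le\epsilon_V$ by the structure of $h^{\tau_i}$, using that $h^{\tau_i}$ is linear of slope $\tau_i$ on $[\epsilon_i-\epsilon_V,\epsilon_i]$ and $\cg{H}\big|_{\dd V}=0$) cannot cross into the convex region $(\epsilon_V, R)\times\dd_-W$ and then come back; any excursion to larger $r$ is ruled out by the maximum principle on that convex collar, so $w$ stays in $V\cup[0,\epsilon_V]\times\dd_-W$ and in particular $x_-$ does too.

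I would organize the write-up as: (i) identify which one-periodic orbits lie in the subcomplex's defining region — the constant orbits of $\cg{H}$ in $V$ together with the orbits in the concave collar $(0,\epsilon_i)\times\dd_-W$, all having $r$-coordinate $\le\epsilon_V$; (ii) state the relevant maximum principle for Floer cylinders in a region where the Hamiltonian has the form $h(e^r)$ with $h$ convex, citing the standard reference (this is the same argument that makes $c^i$ and $\dd^{fl}$ well-defined on $\widetilde M$, mentioned in the first remark of Subsection \ref{cohomology}); (iii) conclude that no rigid trajectory with positive asymptote in the region can have its negative asymptote outside it, hence $\dd^{fl}$ preserves $CF^*_{V,(a,b)}(H^{\tau_i};\Gamma)$; (iv) note closure under the $\Lambda$-action and under passing to the $(a,b)$-truncation. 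The main obstacle is purely technical: pinning down the barrier hypersurfaces precisely at the interface $r=\epsilon_V$ between the concave and convex pieces, and checking that the profile $h^{\tau_i}$ from Conditions (2)--(4) genuinely makes the level set $\{r=\epsilon_V\}$ a hypersurface of ``contact type with the right sign'' so that the maximum principle applies from the outside — i.e.\ that a trajectory cannot sneak from the $V$-side to the $W_+$-side. Everything else is a routine application of machinery already used implicitly in the paper.
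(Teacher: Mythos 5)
There is a genuine gap: your whole argument rests on a ``standard maximum principle on the convex collar,'' but that principle only yields a contradiction when the part of the cylinder mapping beyond the barrier level is a \emph{bounded} subdomain of $\RR\times S^1$, and that is exactly what fails in the dangerous configuration. The trajectories you must exclude run from a negative asymptote $x_-$ \emph{outside} $V\cup[0,\epsilon_V]\times\dd_-W$ to a positive asymptote inside; they cross the collar once rather than making an ``excursion and coming back,'' so the super-level set of $r\circ w$ contains the whole end $s\to-\infty$ and the maximum principle (or the integrated version via Stokes, where one needs $\int_{\dd\Sigma}dt=0$) gives nothing. The paper's proof of Lemma \ref{lem:subcomplex} has to work for this: when $x_-$ is a non-constant orbit in the convex region it first invokes the convexity argument of Bourgeois--Oancea (Proposition 5 of \cite{bourgeois-o}) to show the trajectory rises \emph{strictly above} the level $r_1$ of $x_-$, and only then is the preimage $\Sigma=u^{-1}([r_3,R]\times\dd_-W\cup W_+)$ for an intermediate regular level $r_3\in(r_1,r_2)$ a union of bounded regions on which the integrated maximum principle applies. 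Your write-up skips this step, and without it the subdomain you would integrate over is not compact.

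More seriously, you do not address the case that the paper treats separately and which is not a radial-coordinate argument at all: $x_-$ a \emph{constant} orbit in $W_+$, where $H^{\tau_i}$ equals the $\cg{C}^2$-small function $\cg{H}$ and is not of the form $h(e^r)$, so ``$r\circ w$ is subharmonic'' is unavailable near that end. There $\Sigma$ necessarily contains an unbounded component, $\int_{\dd\Sigma}dt=1$ on the extra slice, and the paper needs a genuinely different estimate: it uses $\cg{H}\big|_{\dd V}=0$ to get $H(\epsilon_i,x)=\lambda(X_{H^{\tau_i}})+\sigma$ with $\sigma>0$, chooses a slice $\{\mathfrak{s}\}\times S^1$ close enough to the negative end that $|\lambda(u(\mathfrak{s}))|<\sigma$, and exploits the negativity of $H^{\tau_i}$ on $W_+$ to force $0\le E(v)<0$. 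This quantitative bookkeeping (which is where conditions (1) and (4)--(5) on $Ad_-(V,M)$ really enter) is absent from your proposal; the ``technical obstacle at the interface $r=\epsilon_V$'' you flag is not the actual difficulty. The surrounding structure of your argument — confinement implies closure under $\dd^{fl}$, and compatibility with the $(a,b)$-truncation and the $\Gamma$-action is automatic — is fine and matches the paper, but the confinement itself is exactly the content of the lemma and is not established by your sketch.
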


\begin{proof}
We show that there are no solutions of Floer's equation (\ref{eq:floer}) with positive limit a periodic orbit in $V\cup [0, \epsilon_V]\times\dd_-W$ and either

\begin{enumerate}
\item negative limit a non-constant orbit in $W_+\cup(\epsilon_V, R]\times\dd_-W$, or
\item negative limit a constant orbit in $W_+\cup(\epsilon_V, R]\times\dd_-W$.
\end{enumerate}
Assume for contradiction that $u(s, t)$ is a solution of Floer's equation with positive end an orbit $x(t)$ in $V\cup [0, \epsilon_V]\times\dd_-W$ and negative end an orbit $y(t)$ in $(\epsilon_V, R]\times\dd_-W\cup W_+.$
\begin{enumerate}
\item This can be found in \cite{cieliebak-o}.  Assume that $y(t)$ is a non-constant orbit.  By the construction of $H^{\tau_i}$, $y(t)\subset (\epsilon_i, R)\times \dd_-W$.  Since $h(r)$ is convex in this region, the proof of Proposition 5 in \cite{bourgeois-o} shows that $u(s, t)$ ``rises above'' $y(t).$  In other words, if $y(t)\subset$ $\{r_1\}\times \dd_-W,$ there exists $(s_1, t_1)\in\RR\times S^1$ and $r_2\in(r_1, R)$ such that $u(s_1, t_1)\subset\{r_2\}\times \dd_-W$.  The integrated maximum principle then applies to reach a contradiction.  We recall this final argument, which we learned from \cite{abouzaid}.

Let $\rho:[-R, R]\times\dd_-W\longrightarrow\RR$ be projection onto the $r$-coordinate, and choose $r_3\in(r_1, r_2)$ so that $r_3$ is a regular value of $\rho\circ u$.  
Consider the surface 
\[
\Sigma = u^{-1}\left([r_3, R]\times\dd_-W\cup W_+\right).
\]
Define $v:\Sigma\longrightarrow\widetilde{M}$ by $v = u\big|_{\Sigma}$.  As $v$ is a solution to Floer's equation (\ref{eq:floer}), the energy defined in equation (\ref{eq: energy}) may be rewritten as
\begin{align*}
E^{top}(v) &= \frac{1}{2}\int_{\Sigma} \omega(\dd_sv, J\dd_sv) + \omega\left(\dd_tv - X_{H^{\tau_i}}, J(\dd_tv - X_{H^{\tau_i}})\right)ds\wedge dt \\
&= \frac{1}{2}\int_{\Sigma} \omega(\dd_sv, \dd_tv - X_{H^{\tau_i}}) + \omega(\dd_tv - X_{H^{\tau_i}}, -\dd_s)ds\wedge dt\\
&= \frac{1}{2}\int_{\Sigma}2\omega(\dd_sv, \dd_tv) - 2\omega(\dd_s v, X_{H^{\tau_i}})ds\wedge dt\\
&= \int_{\Sigma} v^*\omega - v^*dH^{\tau_i}\otimes dt.
\end{align*}
By Stokes theorem this is equivalent to
\begin{equation}
E^{top}(v) = \int_{\dd\Sigma} v^*\lambda - H^{\tau_i}(v(t))dt.
\label{eq:stoke}
\end{equation}
As $\Sigma$ is a collection of bounded regions of $\CC^*$, 
$$\int_{\dd\Sigma} dt = 0,$$
 so that, in particular, 
$$\int_{\dd\Sigma}h^{\tau_i}(e^{r})dt = h^{\tau_i}(e^{r_3})\int_{\dd\Sigma}dt = 0$$
 and 
$$\int_{\dd\Sigma}\lambda_r(X_{H^{\tau_i}})dt = \int_{\dd\Sigma}e^{r}(h^{\tau_i})'(e^{r})dt = e^{r_3}(h^{\tau_i})'(e^{r_3})\int_{\dd\Sigma}dt = 0.$$
Thus, trivially, 
$$\int_{\dd\Sigma} H^{\tau_i}(v(t))dt = \int_{\dd\Sigma}\lambda(X_{H^{\tau_i}})dt.$$  
Using this equality, rewrite the energy as
\begin{align}
E^{top}(v) &= \int_{\dd\Sigma} v^*\lambda - \lambda(X_{H^{\tau_i}})\otimes dt\\
&= \int_{\dd\Sigma} \lambda(dv - X_{H^{\tau_i}}\otimes dt).
\end{align}
A solution $v(s, t)$ of Floer's equation satisfies $(dv - X_{H^{\tau_i}}\otimes dt)^{(0, 1)} = 0.$  As $J$ is conical and $H^{\tau_i}$ is radially-dependent, $JX_{H^{\tau_i}}$ is proportional to $\dd_r$ on $\{r_3\}\times\dd_-W$.  Thus, $\lambda\big|_{\{r_3\}\times\dd_-W}$ vanishes on $JX_{H^{\tau_i}}\big|_{\{r_3\}\times\dd_-W}$.  These observations imply that equation (6) can be written as  
\begin{align*}
E^{top}(v) &= \int_{\dd\Sigma} -\lambda J(dv - X_{H^{\tau_i}}\otimes dt)j\\
&= \int_{\dd\Sigma} -e^rdr\circ dv\circ j\\
&= \int_{\dd\Sigma} -e^rd(r\circ v)\circ j.
\end{align*}
A properly-oriented boundary vector $\zeta$ on $\dd\Sigma$ implies that $j\zeta$ points inwards.  Since $r\circ v$ achieves its minimum on $\dd\Sigma,$ $d(r\circ v)(j\zeta) \geq 0.$  The energy thus satisfies
\[E(v) \leq 0.\]
However, by definition, $E(v) \geq 0$, and so $E(v) = 0$.  Unpacking the properties of Floer solutions, this condition is only satisfied if $v$ is constant in $s$.  We reach a contradiction: $v$ cannot, in fact, exist.

\item 
Assume that $y(t)$ is a constant orbit.  Thus, $y\in W_+$.  Assume without loss of generality that $\epsilon_i$ is a regular value of $\rho\circ u$, and let $\Sigma = u^{-1}\left([\epsilon_i, R]\times\dd_-W\cup W_+\right).$  Note that $H((\epsilon_i, x)) = \lambda_{(\epsilon_i, x)}(X_{H^{\tau_i}}) + \sigma$ for some constant $\sigma > 0$.  While Equation (\ref{eq:stoke}) still holds, $\Sigma$ now decomposes a priori as a collection of bounded regions in $\CC^*$ and one unbounded region, which we call $\dd_+\Sigma$.  The previous computation shows that, in fact, the bounded regions do not exist.  Choose $\mathfrak{s}\in\RR$ such that $u\big|_{\{\mathfrak{s}\}\times S^1}\in\Sigma$ and $|\lambda(u(\mathfrak{s}))| < \sigma$.  The latter condition is possible because $y(t)$ is a constant orbit to which the curves $u(s, \cdot)$ converge smoothly, and so $\lim\limits_{s\rightarrow-\infty}\lambda(u(s)) = \lambda(y) = 0$.  The curves $\dd_+\Sigma$ and $\{\mathfrak{s}\}\times S^1$ bound a region in $\RR\times S^1$, which we call $\Sigma'$.  Let $v = u\big|_{\Sigma'}$.  Note that the boundary orientation of $\{\mathfrak{s}\}\times S^1$ in $\Sigma'$ is induced by $-dt$, so that
\[
\int_{\dd_+\Sigma}dt = \int_{\{\mathfrak{s}\}\times S^1}dt = 1.
\]
By assumption, $u\big|_{\{\mathfrak{s}\}\times S^1}\subset [\epsilon_i, R]\times\dd_+W\cup W_+$, a region on which $H^{\tau_i}$ is negative.  Thus, $\max_{t\in S^1}H(u(\mathfrak{s}, t)) < 0$.  Applying a computation similar to the computation above, we find that
\begin{align*}
0 \leq E(v) &= \int_{\dd_+\Sigma}-e^{\epsilon_i}d(r\circ v)\circ j - \int_{\dd_+\Sigma}\sigma dt -\int_{\{\mathfrak{s}\}\times S^1}v^*\lambda + \int_{\{\mathfrak{s}\}\times S^1}v^*H^{\tau_i}dt \\
&< -\sigma + \sigma + \max_{t\in S^1}H(u(\mathfrak{s}, t))\\
&< 0.
\end{align*}
A contradiction is again reached.
\end{enumerate}

\end{proof}

We have shown that $CF^*_{V, (a, b)}(H^{\tau_i}; \Gamma)$ is a subcomplex of $CF^*_{(a, b)}(H^{\tau_i};\Gamma)$, but, recalling the definition of symplectic chains, we actually want to find continuation maps $\{c^i\}$ so that
\[
\prod_{i=-1}^{-\infty}CF^*_{V, (a, b)}(H^{\tau_i}; \Gamma)[{\bm \theta}]
\hspace{1cm}
\text{is a subcomplex of}
\hspace{1cm}
\prod_{i=-1}^{-\infty}CF^*_{(a, b)}(H^{\tau_i};\Gamma)[{\bm \theta}]
\]
when equipped with the differential $\delta^*$ of equation (\ref{eq:diff}).

Due to condition (6) on elements of $Ad(V, M)$, there exists a constant $\kappa_i > 0$ such that  
\[
H^{\tau_i}\big|_{\left(\dd_-W\times(\epsilon_{i+1}, R)\right)\cup W_+} + \kappa_i= H^{\tau_{i+1}}\big|_{\left(\dd_-W\times(\epsilon_{i+1}, R)\right)\cup W_+}
\]
Let $\chi(s)$ be a bump function that is 1 when $s$ is very negative and 0 when $s$ is very positive.  Let $\{H_s:\widetilde{M}\times\RR\longrightarrow\RR\}_{s\in\RR}$ be an $\RR$-family of Hamiltonians, monotone decreasing in $s$, such that
\[
H_s\big|_{\left((\epsilon_{i+1}, R)\times\dd_-W\right)\cup W_+} = H^{\tau_i}\big|_{\left((\epsilon_{i+1}, R)\times\dd_-W\right)\cup W_+} + \kappa_i\cdot\chi(s).
\]
After choosing a suitable almost-complex structure, $H_s$ induces a continuation map 
\[
c^i: CF^*_{(a, b)}(H^{\tau_i};\Gamma)\longrightarrow CF^*_{(a, b)}(H^{\tau_{i+1}}; \Gamma).
\]

\begin{lemma} The continuation map $c^i$ restricts to a map $c^i: CF^*_{V, (a, b)}(H^{\tau_i}; \Gamma)\longrightarrow CF^*_{V, (a, b)}(H^{\tau_{i+1}}; \Gamma)$.
\label{lem:continuation}.
\end{lemma}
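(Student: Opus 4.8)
The plan is to run the integrated maximum principle already used in the proof of Lemma~\ref{lem:subcomplex}, but now applied to the $s$-dependent Floer equation $\partial_s u + J(\partial_t u - X_{H_s}) = 0$ that defines $c^i$. Since $c^i$ is already defined on the $(a,b)$-truncated complexes and was constructed to respect the action filtration, it is enough to show that it carries the span of $V$-region generators into the span of $V$-region generators; equivalently, that there is no finite-energy solution $u(s,t)$ whose positive asymptote $\lim_{s\to+\infty}u(s,\cdot)$ is an orbit of $H^{\tau_i}$ contained in $V\cup[0,\epsilon_V]\times\partial_-W$ and whose negative asymptote $\lim_{s\to-\infty}u(s,\cdot)$ is an orbit $y$ of $H^{\tau_{i+1}}$ contained in $\left((\epsilon_V,R]\times\partial_-W\right)\cup W_+$. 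As in Lemma~\ref{lem:subcomplex} I would split into the cases $y$ non-constant and $y$ constant.

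The observation that makes the reduction work is that on the region $\left((\epsilon_{i+1},R)\times\partial_-W\right)\cup W_+$ one has $H_s = H^{\tau_i} + \kappa_i\chi(s)$, so $H_s$ differs there from the fixed Hamiltonian $H^{\tau_i}$ only by the function $\kappa_i\chi(s)$ of the $s$-variable alone; in particular $X_{H_s} = X_{H^{\tau_i}}$ on this region for every $s$, and since $\epsilon_{i+1}\geq\epsilon_i$ the radial profile is still convex there. Hence, if $y$ is non-constant, by construction of $H^{\tau_{i+1}}$ it is confined to $(\epsilon_{i+1},R)\times\partial_-W$, the argument of Proposition~5 in \cite{bourgeois-o} applies to show $u$ ``rises above'' $y$, and one obtains a regular value $r_3 > \epsilon_{i+1}$ of the $r$-coordinate of $u$ such that $\Sigma := u^{-1}\left([r_3,R]\times\partial_-W\cup W_+\right)$ is a union of bounded subsurfaces of the cylinder lying over a region on which $X_{H_s}$ is $s$-independent. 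The Stokes computation of Lemma~\ref{lem:subcomplex} for $v := u\big|_\Sigma$ then goes through unchanged: on $\{r_3\}\times\partial_-W$ the vector field $JX_{H_s}$ is proportional to $\partial_r$ and $\lambda(X_{H_s})$ is a genuine constant, so the boundary term collapses to $\int_{\partial\Sigma}-e^{r_3}\,d(r\circ v)\circ j \leq 0$. The constant-orbit case is handled exactly as in part~(2) of Lemma~\ref{lem:subcomplex}, picking $\mathfrak{s}\ll 0$ so that $\int_{S^1}\lambda(\partial_t u(\mathfrak{s},\cdot))\,dt$ is as small as required (legitimate since $u(s,\cdot)$ converges to the constant orbit $y$).

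The only genuinely new accounting is the $s$-dependence of $H_s$, which enters the topological energy identity for a continuation trajectory through the extra bulk term $\int_\Sigma(\partial_s H_s)(v)\,ds\wedge dt$ and, on $\{r_3\}\times\partial_-W$, through the fact that $\int_{\partial\Sigma}(H_s\circ v)\,dt$ equals $\kappa_i\int_{\partial\Sigma}\chi(s)\,dt$ rather than $0$. Since $\partial_s H_s = \kappa_i\chi'(s)$ on $\Sigma$, Stokes gives $\int_\Sigma(\partial_s H_s)(v)\,ds\wedge dt = \kappa_i\int_{\partial\Sigma}\chi(s)\,dt$, so the two extra contributions cancel; alternatively one can simply invoke monotonicity of the family ($\chi'\leq 0$, so $H_s$ is decreasing in $s$) to see that the new term is nonpositive. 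Either way the estimate still yields $E(v)\leq 0$, hence $E(v)=0$, hence $v$ is independent of $s$ — impossible, since $v$ would then be a constant cylinder joining orbits in two disjoint regions. The same cancellation handles the constant-orbit case. I expect this bookkeeping of the $\kappa_i\chi(s)$ shift against $\partial_s H_s$ to be the only subtle step; once it is in place the argument is a transcription of Lemma~\ref{lem:subcomplex}, and the restricted maps $c^i\colon CF^*_{V,(a,b)}(H^{\tau_i};\Gamma)\to CF^*_{V,(a,b)}(H^{\tau_{i+1}};\Gamma)$ are precisely those already constructed, now seen to preserve the $V$-subcomplexes.
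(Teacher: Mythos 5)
Your proposal is correct and follows essentially the same route as the paper: the paper's proof simply observes that, by construction of the homotopy $H_s$, the vector field $X_{H_s}$ is $s$-independent on a neighborhood of any orbit in $\left((\epsilon_{i+1},R)\times\dd_-W\right)\cup W_+$, so the argument of Lemma \ref{lem:subcomplex} applies verbatim. Your explicit bookkeeping of the $\kappa_i\chi(s)$ shift against the bulk term $\int_\Sigma(\dd_sH_s)(v)\,ds\wedge dt$ (or the appeal to monotonicity of the homotopy) is exactly the justification the paper leaves implicit.
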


\begin{proof}
Let $y\subset (\epsilon_V, R]\times\dd_-W\cup W_+$ be a one-periodic orbit of $H^{\tau_{i+1}}$.  Choose a neighborhood $U$ of $y$ inside $(\epsilon_{i+1}, R)\times\dd_-W\cup W_+$.  By construction, $X_{H_s}$ is independent of $s$ on $U$.  The proof of Lemma \ref{lem:subcomplex} now applies verbatim, being only concerned with the behavior of trajectories in the part of $\widetilde{M}$ on which $H_s$ is $s$-independent.

\end{proof}

Define
\[
\widehat{SC_*^{V, (a, b)}}(M; \Gamma) := \prod_{i= -1}^{-\infty} CF^*_{V, (a, b)}(H^{\tau_i}; \Gamma)[{\bm \theta}].
\]
and denote the action-completion of $\widehat{SC_*^{V, (a, b)}}(M; \Gamma)$ by $\widehat{SC_*^V}(M; \Lambda)$.
\begin{lemma}
There is a chain isomorphism
\[
\widehat{SC_*}(V; \Lambda)\cong\widehat{SC_*^V}(M; \Lambda).
\]
\label{lem:chainiso}
\end{lemma}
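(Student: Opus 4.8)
The plan is to compare the two complexes term-by-term and show that the identification of generators respects differentials and action-completion. Recall that $\widehat{SC_*}(V;\Lambda)$ is built from the family $Ad_-(V)$ of Hamiltonians on $\widetilde V$, while $\widehat{SC_*^V}(M;\Lambda)$ is the subcomplex $\prod_{i=-1}^{-\infty}CF^*_{V,(a,b)}(H^{\tau_i};\Gamma)[\bm\theta]$ of $\widehat{SC_*}(M;\Lambda)$ carved out by Lemmas \ref{lem:subcomplex} and \ref{lem:continuation}. First I would make the Hamiltonians match: conditions (1)--(4) on $Ad_-(V,M)$ say that each $H^{\tau_i}$, restricted to $V\cup[0,\epsilon_V]\times\dd_-W$, looks exactly like an admissible Hamiltonian for $V$ with a conical end of slope $\tau_i$ cut off at $\epsilon_i$ — i.e. the restriction $H^{\tau_i}|_{V\cup[0,\epsilon_V]\times\dd_-W}$ can be taken to \emph{be} (after the harmless adjustment in condition (1) that $\cg H|_{\dd V}=0$) a cofinal sequence in $Ad_-(V)$. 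So one gets a bijection between the one-periodic orbits counted by $CF^*_{V,(a,b)}(H^{\tau_i};\Gamma)$ and the one-periodic orbits of the corresponding Hamiltonian in $Ad_-(V)$: all such orbits lie in $V\cup(0,\epsilon_i)\times\dd_-W$, a region on which the two Hamiltonians literally agree, and similarly their orientation lines are canonically identified. This gives a $\Gamma$-module isomorphism on the level of generators, degree by degree in $i$ and in $\bm\theta$.

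Next I would check that this bijection is a chain map. The differential $\delta^*$ has three pieces (see (\ref{eq:diff})): the Floer differential $\dd^{fl}$ on each factor, the continuation terms $c^i - \mathrm{id}$, and the $\bm\theta$-component. For the Floer differential, the point is that any Floer trajectory contributing to $\dd^{fl}$ between two orbits inside $V\cup[0,\epsilon_V]\times\dd_-W$ stays inside that region — this is exactly the content of Lemma \ref{lem:subcomplex}(1), proved via the integrated maximum principle against the convex collar $(\epsilon_i,R)\times\dd_-W$ — and inside that region the Floer data (Hamiltonian, and a cylindrical $J$ chosen compatibly) agrees with the data used for $V$, so the moduli spaces coincide and so do the induced orientation-line maps and the topological $T$-weights $\omega(-\widetilde{x_+}\#w\#\widetilde{x_-})$ (the gluing and the spheres live in $V$). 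The same localization argument applied to $H_s$ shows the continuation maps $c^i$ match up, which is precisely Lemma \ref{lem:continuation}; one should note that the shifts $\kappa_i$ used to build the continuation in $M$ differ from the shift used in $V$ only by a global constant that is absorbed into the choice of lifts $\tilde x\mapsto\tilde x$, hence does not affect the filtered complex (cf. the remark after (\ref{eq:filtration})). So the identification commutes with $\delta^*$ before completion.

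Finally, I would pass to the action-window limits. The identification of generators intertwines the action functionals $\cg A_{H^{\tau_i}}$ on $V$ and on $M$ up to the same global constant (again absorbable, and in any case locally constant in $i$), so it carries the filtration $CF^*_{V,(a,b)}$ on the $M$-side to the filtration defining $SC_*^{(a,b)}(V;\Gamma)$, commutes with the structure maps (\ref{eq:filtration}) — inclusions and projections — and therefore descends to an isomorphism after $\lim_{\to a}\lim_{\leftarrow b}$; here one uses, as in Subsection \ref{rfh}, that these are exact limits (Mittag-Leffler via surjectivity of the projections). I expect the main obstacle to be bookkeeping rather than geometry: one must be careful that the \emph{same} cofinal family and the \emph{same} compatible almost-complex structures can be used on both sides simultaneously, and that the constant shifts $\kappa_i$ and the discrepancy between $\cg A_{H^{\tau_i}}|_V$ and the genuine $V$-action functional are genuinely harmless at the level of the $(a,b)$-filtered complex — this is where condition (1) (bounding the action of constant orbits) and the choice $\cg H|_{\dd V}=0$ are doing real work. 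Everything else is a routine naturality check inherited from Lemmas \ref{lem:subcomplex} and \ref{lem:continuation}.
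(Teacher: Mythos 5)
Your proposal is correct and follows essentially the same route as the paper: identify generators with the restricted Hamiltonian, use the integrated maximum principle (from Lemma \ref{lem:subcomplex}) to identify differentials and continuation maps, and then pass to the $(a,b)$-limits. The only small caveat is that you invoke Lemma \ref{lem:subcomplex}(1) as asserting that trajectories between orbits of $CF^*_{V,(a,b)}$ stay entirely in $V\cup[0,\epsilon_V]\times\dd_-W$ --- the lemma as stated only gives the subcomplex property, but the same Stokes/maximum-principle estimate applied to $\Sigma = u^{-1}([r_3,R]\times\dd_-W\cup W_+)$ for $r_3$ close to $\epsilon_V$ does give the confinement you need, which is what the paper's ``the differentials are canonically identified'' is implicitly leaning on.
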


\begin{proof}
Abuse notation slightly, and let $CF^*_{(a, b)}(H^{\tau_i}\big|_V; \Gamma)$ be the Floer complex associated to the restricted Hamiltonian $H^{\tau_i}:V\cup [0, \epsilon_V)\times\dd V\longrightarrow\RR$.  The elements of $CF^*_{(a, b)}(H^{\tau_i}\big|_V; \Gamma)$ are in clear bijection with the elements of $CF^*_{V, (a, b)}(H^{\tau_i}; \Gamma)$.  Furthermore, by the proof of Lemma \ref{lem:subcomplex}, the differentials are canonically identified.  Similarly, continuation maps are canonically identified, yielding a chain isomorphism
\[
\prod_{i=-1}^{-\infty} CF^*_{(a, b)}(H^{\tau_i}\big|_V; \Gamma)[{\bm \theta}] \cong \prod_{i=-1}^{-\infty} CF^*_{V, (a, b)}(H^{\tau_i}; \Gamma)[{\bm \theta}]. 
\]
After taking action limits, the left-hand side agrees with the completed symplectic chain complex of $V$.

\end{proof}

Choose a continuation map $c: CF^*_{(a, b)}(H^{\tau_{-1}}; \Gamma)\longrightarrow CF^*_{(a, b)}(H^{\tau_0}; \Gamma)$.  This choice induces a chain map $c: CF^*_{V, (a, b)}(H^{\tau_{-1}}; \Gamma)\longrightarrow CF^*_{(a, b)}(H^{\tau_0}; \Gamma)$.  Define a map $\frak{c}$ by the commutative diagram
\begin{equation}
\label{eq:continuation}
\begin{tikzcd}
SC_*^{V, (a, b)}(M; \Gamma) \arrow{r}{\mathfrak{c}} \arrow[twoheadrightarrow]{d}{\pi} & SC^*_{(a, b)}(M; \Gamma) \\ CF^*_{V, (a, b)}(H^{\tau_{-1}}; \Gamma){\bm \theta} \arrow{r}{c} & CF^*_{(a, b)}(H^{\tau_0}; \Gamma) \arrow[hookrightarrow]{u}{\iota}
\end{tikzcd}
\end{equation}
As $\dd^{fl}$ commutes with continuation maps, $\mathfrak{c}$ is a chain map.
\begin{definition}
The $(a, b)$-truncated symplectic cochain complex of $W$ is the cone of $\mathfrak{c}$.  Denote it by
\[
SC^*_{(a, b)}(W; \Gamma) := Cone\left(\mathfrak{c}:SC_*^{V, (a, b)}(M; \Gamma)\longrightarrow SC^*_{(a, b)}(M; \Gamma)\right).
\]
\end{definition}
\begin{definition}
The completed symplectic cochain complex of $W$ is
\[
\widehat{SC^*}(W; \Lambda) := \lim_{\substack{\longrightarrow \\ a}}\lim_{\substack{\longleftarrow \\ b}}SC^*_{(a, b)}(W; \Gamma)
\]
The homology of this complex is denoted by $\widehat{SH^*}(W; \Lambda)$.
\end{definition}

Analogously to the computations in Section \ref{rfh}, there is a long exact sequence
\begin{equation}
...\longrightarrow \widehat{SH_n}(V; \Lambda)\xlongrightarrow{\mathfrak{i}} \widehat{SH^n}(M; \Lambda)\xlongrightarrow{q} \widehat{SH^n}(W; \Lambda)\longrightarrow \widehat{SH_{n+1}}(V; \Lambda)\longrightarrow...
\label{diag:les-cob}
\end{equation}
which shows Theorem \ref{thm:les}.

\begin{remark}
As $M$ is monotone, the symplectic chain and cochain complexes are well-defined without truncating each Floer complex by action.  Denote these complexes by $SC_*(V;\Gamma)$ and $SC^*(M;\Gamma)$, respectively.  The map $\mathfrak{c}$ is also well-defined without truncating by action; call the cone of $\mathfrak{c}$ the symplectic cochain complex of $W$, denoted by $SC^*(W; \Gamma)$.  These three ''uncompleted'' complexes form a triangle analogous to (\ref{diag:exact-trunc}), and taking homology results in a long exact sequence analogous to (\ref{diag:les-cob}).
\end{remark}

\section{A non-vanishing theorem}
\label{non-vanish}

Computing symplectic cohomology is quite difficult; it has only been computed (in the monotone case) for negative line bundles by Ritter in \cite{ritter1}.  An easier line of inquiry is to ask, ``is symplectic cohomology non-zero?''  One method of answering this question affirmatively is to find a Lagrangian submanifold $L\subset M$ with non-vanishing Floer homology and show that there exists a map of unital rings $SH^*(M)\longrightarrow HF^*(L)$ from the symplectic cohomology of $M$ to the Floer homology of $L$.

For example, equation (6.4) of \cite{ritter-s} says that a monotone Lagrangian $L$ contained in a monotone manifold $M$ admits a map of unital rings
\[
SH^*(M; \Lambda)\longrightarrow HF^*(L; \Lambda).
\]
We will show that if, under suitable conditions, a Lagrangian $L$ is contained in the Liouville cobordism $W\subset M$, then this map factors through $\widehat{SH^*}(W; \Lambda)$ via the map $q:\widehat{SH^*}(M; \Lambda)\longrightarrow \widehat{SH^*}(W; \Lambda)$ appearing in the long-exact sequence (\ref{diag:les-cob}).
\[\begin{tikzcd}[column sep=scriptsize]
\widehat{SH^*}(M; \Lambda) \arrow[dr] \arrow[rr]
& & HF^*(L; \Lambda) \\
& Im(q) \arrow[ur] \arrow[draw = none]{d}[sloped, auto=false]{\subset}\\
& \widehat{SH^*}(W; \Lambda)
\end{tikzcd}\]
From this we will deduce the following theorem.

\nonvanish*

The conditions on $L$, orientability and monotonicity, control the behavior of Maslov discs.  Recall that a Lagrangian $L\subset M$ is {\it monotone} if the area and the Maslov index of any $J$-holomorphic disc with boundary on $L$ are positively proportional.  That is, there exists a constant $c > 0$ associated to $L$ such that for every $J$-holomorphic map $u:(D^2, \dd D^2)\longrightarrow (M, L)$ the symplectic area of $u$ and the Maslov index $\mu(u)$ satisfy
\begin{equation}
\label{eq:monotone-L}
\mu(u) = 2c\int_{D^2}u^*\omega.
\end{equation}
If $L$ is also orientable then the Maslov index of any such non-constant disc is at least two.

\subsection{Lagrangian quantum cohomology}
\label{lqc}

Fix a coefficient field $\KK$.  The Lagrangian Floer cohomology of a monotone Lagrangian submanifold $L$ with coefficients in a flat line bundle $E_{\gamma}$ is isomorphic to the Lagrangian quantum cohomology of $L$ with coefficients twisted by $\gamma\in H^1(L)$, where the holonomies of $E_{\gamma}$ are determined by $\gamma$.  (This is stated in Section 2.4 of \cite{biran-c1} and worked out in detail in \cite{biran-c3} in the untwisted case.)  We recall the definition of Lagrangian quantum cohomology.

Define a valuation on $\Lambda$ by
\begin{align}
val:\Lambda&\longrightarrow\RR\cup\{\infty\} \label{val}\\
\sum_{n=1}^{\infty} c_nT^{k_n} &= \left\{\begin{array}{cc} \min\limits_{c_n\neq 0}k_n & \exists\hspace{.1cm} c_n\neq 0 \\ \infty & \text{else}\end{array}\right.
\end{align}

Let $U_{\Lambda} = val^{-1}(0)$, and fix $\gamma \in H^1(L, U_{\Lambda})$.  Fix a Morse-Smale pair $(f, g)$ on $L$ and a generic almost-complex structure $J$ on $M$.

Let $\Phi_t$ be the flow of $-\nabla_g(f)$.  For critical points $x$ and $y$ of $f$ and an integer $\ell \geq 1$, let $\cg{M}_{\ell}(x, y; f, g, J)$ be the moduli space of tuples $(u_1, ..., u_{\ell})$, where
\begin{enumerate}
\item $u_i:(D^2, \dd D^2)\longrightarrow (M, L)$ is a non-constant $J$-holomorphic disc for all $1\leq i \leq \ell$,
\item for every $1 \leq i < \ell$ there exists $-\infty < t < 0$ such that $\Phi_t(u_{i+1}(1)) = u_i(-1)$, and
\item $u_1(1)$ lies in the unstable manifold of $x$ and $u_{\ell}(-1)$ lies in the stable manifold of $y$.
\end{enumerate}
Let $Aut(D^2, \pm 1)$ be the automorphisms of the disc fixing $-1$ and $1$, so that $Aut(D^2, \pm 1)^{\ell}$ acts on $\cg{M}_{\ell}(x, y; f, g, J)$.  Let $\cg{M}_0(x, y; f, g)$ be the moduli space of gradient flow lines of $f$ with negative asymptotic limit $x$ and positive asymptotic limit $y$.  $\RR$ acts on elements of $\cg{M}_0(x, y; f, g)$ by translation.  Denote by $\cg{M}^0(x, y; f, g, J)$ the rigid elements of 
\[
\bigslant{\cg{M}_0(x, y; f, g)}{\RR}\cup\bigcup_{\ell\geq 1}\bigslant{\cg{M}_{\ell}(x, y; f, g, J)}{Aut(D^2, \pm 1)^{\ell }}.
\]

\begin{remark}
\label{rmk:quantum-trans}
Transversality of the moduli spaces $\cg{M}_{\ell}(x, y; f, g, J)$ for generic triples $(f, g, J)$ is not automatic.  The discs may not be simple, or they may not be absolutely distinct.  However, Biran-Cornea showed in \cite{biran-c2} that somewhere-injectivity does not fail for dimension 0 and 1 strata of $\bigslant{\cg{M}_{\ell}(x, y; f, g, J)}{Aut(D^2, \pm 1)^{\ell }}$.  We can therefore use the moduli spaces $\cg{M}^0(x, y; f, g, J)$ to define a Floer homology theory, invariant up to generic choice of data $(f, g, J)$.
\end{remark}

Define a chain complex
\[
CF^*(L, E_{\gamma}) := \bigoplus_{x\in\Crit(f)}\Lambda\cdot x.
\]
A $\ZZ/2\ZZ$-grading on critical points is given by the Morse index, and we grade $T$ by $|T| = 0$.  The differential $\dd$ is given by
\[
\dd(y) = \sum_{\substack{x \in\Crit(f) \\ {\bm u} = (u_1, ..., u_{\ell}) \\ \in\cg{M}^0(x, y; f, g, J)}} \pm T^{\omega([u_1] + ... + [u_{\ell}])}\la\gamma, [\dd u_1] + ... + [\dd u_{\ell}]\ra\cdot x.
\]
The differential counts weighted `pearly trajectories' between $x$ and $y$ (see Figure \ref{fig:pearly-diff1}).  The sign is determined by a choice of orientations on the unstable manifolds of critical points of $f$ and a choice of spin structure on $L$.  (See the Appendix of \cite{biran-c1} for a careful discussion of orientations, in particular Section A.2.)  As shown in \cite{biran-c1}, $\dd$ is well-defined and squares to zero.  The homology of $CF^*(L, E_{\gamma})$ is the Lagrangian Floer homology $HF^*(L, E_{\gamma})$.

\begin{figure}[htbp!]
\center
\includegraphics[scale=.7]{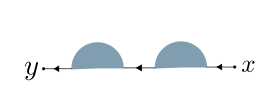}
\caption{The Lagrangian quantum differential}
\label{fig:pearly-diff1}
\end{figure}

Define the action $\cg{A}$ of an element $T^kx$, where $x\in\Crit(f)$ and $k\in\RR$, to be $\cg{A}(T^kx) = k$.  As $J$-holomorphic discs have non-negative area, the quantum differential increases $\cg{A}$.  We may thus consider the subcomplex
\[
CF^*_a(L, E_{\gamma}) := \left\la T^kx\hspace{.1cm}\big|\hspace{.1cm} k\in\RR,\hspace{.05cm}, x\in\Crit(f),\hspace{.05cm} \cg{A}(T^kx) > a\right\ra,
\]
with cohomology denoted by $HF^*_a(L, E_{\gamma})$, and the quotient complex
\[
CF^*_{(a, b)}(L, E_{\gamma}) := \bigslant{CF^*_a(L, E_{\gamma})}{CF^*_b(L, E_{\gamma})},
\]
with cohomology denoted by $HF^*_{(a, b)}(L, E_{\gamma})$.

Let $\Lambda_{>0} = val^{-1}\left((0, \infty]\right)$, where $val$ is as defined in (\ref{val}).
As we will be working with action-truncated complexes, define
\[
\Lambda_a = T^a\Lambda_{>0}
\]
and let
\[
\Lambda_{(a, b)} = \bigslant{\Lambda_a}{\Lambda_b}.
\]
Note that, by definition, $\Lambda = \lim\limits_{\substack{\longrightarrow \\ a}}\lim\limits_{\substack{\longleftarrow \\b}}\Lambda_{(a, b)}$, and
\begin{equation}
CF^*_{(a, b)}(L, E_{\gamma}) = \bigoplus_{x\in\Crit(f)} \Lambda_{(a, b)}\cdot x
\label{eq:finitesum}
\end{equation}

\begin{lemma}
\label{lem:equiv-quantum}
\[
\lim\limits_{\substack{\longrightarrow \\ a}}\lim\limits_{\substack{\longleftarrow \\b}}HF^*_{(a, b)}(L, E_{\gamma})\cong HF^*(L, E_{\gamma}) \cong H^*\left(\lim\limits_{\substack{\longrightarrow \\ a}}\lim\limits_{\substack{\longleftarrow \\b}}CF^*_{(a, b)}(L, E_{\gamma})\right).
\]
\end{lemma}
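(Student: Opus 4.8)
The plan is to prove the two isomorphisms separately, with the middle object $HF^*(L, E_\gamma)$ as the anchor. Throughout, the key structural input is equation (\ref{eq:finitesum}): each truncated complex $CF^*_{(a,b)}(L, E_\gamma)$ is a \emph{finite} direct sum over $\Crit(f)$ of copies of $\Lambda_{(a,b)}$, because $L$ is compact and $(f,g)$ is Morse-Smale, so $\Crit(f)$ is a finite set. This finiteness is what makes the limits interact well with both homology and the direct sum.

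First I would establish the right-hand isomorphism $HF^*(L, E_\gamma)\cong H^*\bigl(\lim_{\to a}\lim_{\gets b}CF^*_{(a,b)}(L, E_\gamma)\bigr)$. The point is that the chain complex $CF^*(L,E_\gamma)=\bigoplus_{x\in\Crit(f)}\Lambda\cdot x$ is already a finite sum, and by the displayed identity $\Lambda=\lim_{\to a}\lim_{\gets b}\Lambda_{(a,b)}$, together with the fact that the inverse limit over $b$ and the direct limit over $a$ each commute with finite direct sums, one gets a chain isomorphism $CF^*(L,E_\gamma)\cong\lim_{\to a}\lim_{\gets b}CF^*_{(a,b)}(L,E_\gamma)$ on the nose. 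I would need to check that the differential is compatible: since the quantum differential strictly increases the action $\cg{A}$ and the matrix coefficients lie in $\Lambda$, the differential on the limit complex agrees with the original $\dd$ under this identification. Taking homology of both sides then gives the right-hand isomorphism immediately.

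Next, the left-hand isomorphism $\lim_{\to a}\lim_{\gets b}HF^*_{(a,b)}(L,E_\gamma)\cong HF^*(L,E_\gamma)$. Here I would argue that $\lim_{\gets b}$ is exact on the inverse system $\{CF^*_{(a,b)}\}_b$, because the projection maps $CF^*_{(a,b)}\twoheadrightarrow CF^*_{(a,b')}$ for $b'\le b$ are surjective, so the Mittag-Leffler condition holds and $\lim^1$ vanishes; hence $\lim_{\gets b}$ commutes with taking homology. The direct limit $\lim_{\to a}$ is always exact, so it too commutes with homology. Combining, $\lim_{\to a}\lim_{\gets b}HF^*_{(a,b)}\cong H^*\bigl(\lim_{\to a}\lim_{\gets b}CF^*_{(a,b)}\bigr)$, which by the previous paragraph is $HF^*(L,E_\gamma)$.

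The main obstacle I anticipate is making the exactness-of-$\lim_{\gets b}$ step fully rigorous at the level of the whole bi-directed system rather than one system at a time: one must ensure that the homologies $H^*(CF^*_{(a,b)})$ themselves form a Mittag-Leffler system in $b$ (or invoke that the complexes do, degreewise, which suffices since everything is finitely generated over $\Lambda_{(a,b)}$), and that passing $\lim_{\gets b}$ through $H^*$ is compatible with the subsequent $\lim_{\to a}$. Given (\ref{eq:finitesum}) and the surjectivity of the structure maps, this is a routine but slightly fiddly homological-algebra bookkeeping exercise; I would cite the standard $\lim^1$ vanishing criterion and, if needed, Theorem 5.6 of \cite{frei-m} (already invoked in Remark \ref{rem:order}) to handle the order of limits. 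The geometric content — transversality and well-definedness of $HF^*(L,E_\gamma)$ — is entirely supplied by the setup in Subsection \ref{lqc} and Remark \ref{rmk:quantum-trans}, so nothing new is needed there.
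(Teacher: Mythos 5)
Your handling of the right-hand isomorphism is exactly the paper's argument: $\Crit(f)$ is finite, so by (\ref{eq:finitesum}) both limits pass through the finite direct sum, $\lim_{\to a}\lim_{\gets b}\Lambda_{(a,b)}=\Lambda$ gives a chain-level identification, and $T$-linearity of the quantum differential finishes it. For the left-hand isomorphism you take a genuinely different route from the paper, which simply declares it a canonical identification analogous to Remark \ref{rmk:novikov}; you instead want to deduce it from the chain-level statement by commuting the two limits with homology. That route can work, but your justification of the key step is not valid as written: surjectivity of the projections $CF^*_{(a,b)}\twoheadrightarrow CF^*_{(a,b')}$ only gives the Mittag-Leffler condition at the \emph{chain} level, which produces the Milnor sequence $0\to{\lim}^1_b\, HF^{*-1}_{(a,b)}\to H^*\bigl(\lim_{\gets b}CF^*_{(a,b)}\bigr)\to\lim_{\gets b}HF^*_{(a,b)}\to 0$; it does not by itself make $\lim_{\gets b}$ commute with homology. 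You acknowledge this, but both of your proposed patches fail: invoking chain-level ML ``degreewise'' is circular (that is precisely what the Milnor sequence already consumes), and ``finitely generated over $\Lambda_{(a,b)}$'' does not force the decreasing chain of images in $HF^*_{(a,b_0)}$ to stabilize, since $\Lambda_{(a,b)}$ is not Artinian --- its ideals are indexed by real cut-offs, so finitely generated modules admit strictly decreasing infinite chains of submodules.

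The gap is repairable, but the correct input is finite rank over the Novikov \emph{field}: because $\Lambda$ is a field and $CF^*(L,E_\gamma)$ is a finite-rank filtered complex, it splits into finitely many elementary filtered pieces (a normal-form/Barannikov-type decomposition), so there are only finitely many boundary-depth thresholds; this is what makes $\{HF^*_{(a,b)}\}_b$ Mittag-Leffler, kills the ${\lim}^1$ term, and lets your exactness argument go through. So: second isomorphism, same as the paper; first isomorphism, a different and workable strategy, but the ML-of-homology step must be argued from finite dimensionality over $\Lambda$ rather than from the reasons you give. (The paper itself silently relies on this same vanishing of ${\lim}^1 HF^*_{(a,b)}(L,E_\gamma)$ in the bottom row of the diagram in the proof of Theorem \ref{thm:nonvanish}, so spelling it out would not be wasted effort.)
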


\begin{proof}
The first isomorphism is a canonical identification, analogous to Remark \ref{rmk:novikov}.  To see the second isomorphism, note that the right-hand sum of (\ref{eq:finitesum}) is finite and so commutes with both inverse and direct limits.  Thus,
\begin{align}
\label{eq:lim}\lim\limits_{\substack{\longrightarrow \\ a}}\lim\limits_{\substack{\longleftarrow \\b}}CF^*_{(a, b)}(L, E_{\gamma})
&= \lim\limits_{\substack{\longrightarrow \\ a}}\lim\limits_{\substack{\longleftarrow \\b}}\bigoplus_{x\in\Crit(f)} \Lambda_{(a, b)}\cdot x\\
&\cong \bigoplus_{x\in\Crit(f)} \lim\limits_{\substack{\longrightarrow \\ a}}\lim\limits_{\substack{\longleftarrow \\b}}\Lambda_{(a, b)}\cdot x\\
&\cong  \bigoplus_{x\in\Crit(f)}\Lambda\cdot x\\
&\cong CF^*(L, E_{\gamma}).
\end{align}
As the quantum differential is $T$-linear, the result follows.
\end{proof}

\begin{remark}
As the Lagrangian Floer cohomology of $L$, $HF^*(L, E_{\gamma})$ is a unital ring.  Suppose $HF^*(L, E_{\gamma})\neq 0$.  If $f$ has a unique minimum $\mathfrak{m}$, then $\mathfrak{m}$ represents the unit, and therefore survives in cohomology.
\label{rmk:quantum-neq0}
\end{remark}

\subsection{Proof of Theorem \ref{non-vanish}}

We first define a map $SC^*_{(a, b)}(M; \Gamma)\longrightarrow CF^*_{(a, b)}(L, E_{\gamma}).$  This map will count 'half-cylinder' solutions to Floer's equation that rise asymptotically to generators of some $CF^*_{(a, b)}(H^{\tau_i};\Gamma)$ and whose boundary lies in $L$.

To these ends, fix a Hamiltonian $H^{\tau_i}$.  Recall the function $\cg{H}$ built into the definition of $H^{\tau_i}$ (see Section \ref{cohomology}), and without loss of generality assume that $\cg{H} < 0$ in a neighborhood of $L$.  Let $\{H_s^{\tau_i}\}_{s\in[0, \infty)}$ be a one-parameter family of Hamiltonians such that $\lim\limits_{s\rightarrow\infty} H_s^{\tau_i} = H^{\tau_i}$, and $H^{\tau_i}_s(x) = 0$ when both $s$ is close to zero and $x$ is close to $L$.  Further assume that $H_s^{\tau_i}$ is monotone decreasing in $s$.  These conditions will ensure that we create a well-defined chain map.

For a periodic solution $x$ of $X_{H^{\tau_i}}$ and generic one-parameter family of cylindrical almost-complex structures $\{J_s\}_{s\in[0, \infty)}$, let $\cg{M}(x, L; H^{\tau_i})$ be the moduli space of maps $w:[0, \infty)\times S^1\longrightarrow M$ satisfying
\begin{enumerate}
\item $\lim\limits_{s\rightarrow\infty} w(s, t) = x(t)$,
\item \begin{minipage}{.9\textwidth}$w\big|_{\{0\}\times S^1}\in L$, and \end{minipage}
\begin{minipage}{.05\textwidth}($\star$)\end{minipage}
\item $\dd_s(w) + J_s(\dd_t(w) - X_{H_s^{\tau_i}}) = 0$.
\end{enumerate}

Fix a Morse-Smale pair $(f, g)$ on $L$ so that $f$ has a unique minimum $\mathfrak{m}$.  Let $\Phi_t$ be the flow of $-\nabla_g(f)$.  For each integer $\ell\geq 1$ and $p\in\Crit(f)$, let $\cg{M}_{\ell}(p, x; H^{\tau_i})$ be the moduli space of tuples $(u_1, ..., u_{\ell})$, where
\begin{enumerate}
\item $u_i:(D^2, \dd D^2)\longrightarrow (M, L)$ is a non-constant $J$-holomorphic disc for all $1\leq i \leq \ell - 1$,
\item $u_{\ell}\in\cg{M}(x, L; H^{\tau_i})$,
\item $u_1(1)$ is in the unstable manifold of $p$,
\item for every $1 \leq i < \ell$ there exists $-\infty < t < 0$ such that $\Phi_t(u_{i+1}(1)) = u_i(-1)$.
\end{enumerate}
Let $Aut(D^2, \pm 1)$ be the automorphisms of the disc fixing $-1$ and $1$, so that $Aut(D^2, \pm 1)^{\ell -1}$ acts on $\cg{M}_{\ell}(p, x; H^{\tau_i})$.  Let 
\[
\cg{M}(p, x; H^{\tau_i}) = \bigcup_{\ell\geq 1}\bigslant{\cg{M}_{\ell}(p, x; H^{\tau_i})}{Aut(D^2, \pm 1)^{\ell -1}}.
\]
Denote by $\cg{M}^0(p, x; H^{\tau_i})$ the rigid elements of $\cg{M}(p, x; H^{\tau_i})$.  Fix a class $\gamma\in H^1(L; U_{\Lambda})$.  Fix a spin structure on $L$ and Morse orientations, so that any element ${\bm u} \in\cg{M}^0(p, x; H^{\tau_i})$ determines a map $d_{{\bm u}}:o_x\longrightarrow\pm 1$.  Let
\begin{equation}
\label{eq:omega-u}
\omega({\bm u}) = \omega(-\tilde{x}\#u_{\ell}) + \omega(u_1) + ... + \omega(u_{\ell-1})
\end{equation}
and
\begin{equation}
\label{eq:bound-u}
[\dd{\bm u}] = [\dd u_1] + ... + [\dd u_{\ell -1}] + [u_{\ell}(0, \cdot)].
\end{equation}
Define a $T$-linear map
\begin{align*}
\iota^{\tau_i}_{(a, b)}: CF^*_{(a, b)}(H^{\tau_i};\Gamma)&\longrightarrow CF^*_{(a, b)}(L, E_{\gamma})\\
\zeta_x &\mapsto \sum_{p\in\Crit(f)}\sum_{\substack{{\bm u} = (u_1, ..., u_{\ell})\\\in\cg{M}^0(p, x; H^{\tau_i})}}d_{{\bm u}}(\zeta_x)T^{\omega({\bm u})}\la\gamma,[\dd{\bm u}]\ra \cdot p.
\end{align*}
Geometrically, we are mapping a cylinder $\tilde{x}\in M$ to the sum of cylinders $-\tilde{x}\#w$ with boundary on $L$ formed by gluing $\tilde{x}$ to the cylinder $w$, and adding in 'pearls' from pearly trajectories between $w(0, 1)$ and $p$.  See Figure \ref{fig:pearly-map1}.

\begin{figure}[htbp!]
\center
\includegraphics[scale=.7]{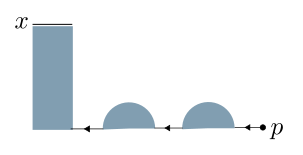}
\caption{The map $\iota^{\tau_i}$}
\label{fig:pearly-map1}
\end{figure}

\begin{lemma} $\iota^{\tau_i}_{(a, b)}$ is well-defined and descends to a map on homology.
\label{lemma:discs}
\end{lemma}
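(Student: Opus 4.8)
The plan is to verify three things in turn: that the point counts entering the definition of $\iota^{\tau_i}_{(a,b)}$ are finite, so the formula defines a map of the stated complexes; that the map respects the action filtration, so that it descends to the quotient complexes $CF^*_{(a,b)}$; and that it intertwines $\dd^{fl}$ with the pearly differential $\dd_L$ of $CF^*(L,E_\gamma)$ up to sign, so that it descends to homology (this map being a half-cylinder, closed--open type analogue of the maps in \cite{ritter-s}). For the first point I would fix the auxiliary data $(f,g)$, $\{J_s\}$, $\{H^{\tau_i}_s\}$ generically. The disc components $u_1,\dots,u_{\ell-1}$ of a configuration in $\cg{M}_{\ell}(p,x;H^{\tau_i})$ are exactly those of Remark \ref{rmk:quantum-trans}: by Biran--Cornea \cite{biran-c2}, somewhere-injectivity does not fail on the zero- and one-dimensional strata, so these are cut out transversally for generic $(f,g,J)$. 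The half-cylinder component $u_{\ell}\in\cg{M}(x,L;H^{\tau_i})$ is a Floer problem on $[0,\infty)\times S^1$ with a totally real boundary condition along $L$, regular for a generic family $\{J_s\}$ after perturbing if necessary within the allowed class of families $\{H^{\tau_i}_s\}$; here the hypothesis that $H^{\tau_i}_s$ vanishes near $L$ for small $s$ guarantees that near the boundary circle the map is genuinely $J_s$-holomorphic, so standard boundary regularity applies, and the necklace matching conditions are made transverse by the usual perturbation of the gradient data. Since $\Crit(f)$ is finite and, by Gromov--Floer compactness together with the monotonicity of $M$ and of $L$, there are only finitely many rigid configurations with total symplectic area below any fixed bound, each $\iota^{\tau_i}_{(a,b)}(\zeta_x)$ is a finite sum, hence a well-defined element of $CF^*_{(a,b)}(L,E_\gamma)$.

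For the second point I would run the usual energy identity for the half-cylinder. Expanding $0\le E(u_{\ell})=\int_{[0,\infty)\times S^1}|\dd_s u_{\ell}|^2\,ds\wedge dt$ using Floer's equation for $H^{\tau_i}_s$ and the vanishing of $H^{\tau_i}_s$ near $L$ at $s=0$ gives
\[
0\le E(u_{\ell})=\int u_{\ell}^*\omega-\int_{S^1}H^{\tau_i}(x(t))\,dt+\int_{[0,\infty)\times S^1}(\dd_s H^{\tau_i}_s)(u_{\ell})\,ds\wedge dt .
\]
Because $H^{\tau_i}_s$ is monotone decreasing in $s$ the last integral is nonpositive, so $\int u_{\ell}^*\omega\ge\int_{S^1}H^{\tau_i}(x(t))\,dt$; combining this with $\omega(u_1),\dots,\omega(u_{\ell-1})\ge 0$ and the definition (\ref{eq:omega-u}) of $\omega({\bm u})$ unwinds to $\cg{A}(T^{\alpha+\omega({\bm u})}p)\ge\cg{A}_{H^{\tau_i}}(T^{\alpha}\zeta_x)$ for every contributing configuration ${\bm u}$. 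Hence $\iota^{\tau_i}$ carries $CF^*_a(H^{\tau_i};\Gamma)$ into $CF^*_a(L,E_\gamma)$ and $CF^*_b(H^{\tau_i};\Gamma)$ into $CF^*_b(L,E_\gamma)$, so it is well-defined on the quotient complexes $CF^*_{(a,b)}$.

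For the third point I would read off the codimension-one boundary of the compactified one-dimensional moduli space $\overline{\cg{M}}^1(p,x;H^{\tau_i})$. Its faces are of three kinds: (a) a Floer cylinder breaks at the asymptotic end $s\to+\infty$, producing a rigid trajectory in $\hat{\cg{M}}^0(x',x)$ glued to an element of $\cg{M}^0(p,x';H^{\tau_i})$, and these assemble to $\iota^{\tau_i}\circ\dd^{fl}$; (b) a negative gradient segment of the pearl necklace breaks --- including the limiting case in which a disc bubbles off at a necklace node, which reattaches to give a pearly trajectory --- and these assemble to $\dd_L\circ\iota^{\tau_i}$; (c) strata containing a sphere bubble or a Maslov-$0$ disc, which are excluded for generic data since $M$ and $L$ are monotone and $L$ is orientable, so non-constant discs have Maslov index at least two and such degenerations have codimension at least two. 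Orienting all moduli spaces via the spin structure on $L$, the Morse orientations, and the orientation isomorphisms of \cite{abouzaid} --- organized as in the appendix of \cite{biran-c1} --- a sign count gives $\iota^{\tau_i}_{(a,b)}\circ\dd^{fl}=\pm\,\dd_L\circ\iota^{\tau_i}_{(a,b)}$, with the sign absorbed into the definition of the map; since all differentials preserve the action windows, this identity persists after truncation, so $\iota^{\tau_i}_{(a,b)}$ descends to a map $HF^*_{(a,b)}(H^{\tau_i};\Gamma)\to HF^*_{(a,b)}(L,E_\gamma)$ on homology.

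The main obstacle I anticipate is the third step: carrying out a complete and correct enumeration of the codimension-one degenerations of this mixed pearl/half-cylinder moduli space --- in particular, simultaneously controlling disc bubbling at the interior necklace nodes and at the boundary circle $\{0\}\times S^1$ of the half-cylinder --- and maintaining coherent orientations throughout, while checking that the precise conditions imposed on the data ($H^{\tau_i}_s$ monotone in $s$, vanishing near $L$ for small $s$, and $\cg{H}<0$ near $L$, which makes a monotone decreasing interpolation from $0$ down to $\cg{H}$ possible) are exactly what is needed both to exclude the unwanted strata in (c) and to make the energy estimate of the second step go through.
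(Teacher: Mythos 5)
Your skeleton matches the paper's proof: the action estimate for the half-cylinder using $\dd_s H^{\tau_i}_s\leq 0$ together with $\omega(u_i)\geq 0$ and $\gamma\in H^1(L;U_{\Lambda})$, transversality via Biran--Cornea for the pearl strata and Albers for the half-tube, and a boundary analysis of the zero- and one-dimensional strata in a fixed relative homology class. The gap is in your enumeration of the codimension-one faces in the third step. You list only (a) cylinder breaking, (b) gradient-segment breaking ``including'' disc bubbling at a necklace node, and (c) sphere/disc bubbling excluded by codimension two. This accounting is wrong at exactly the delicate point: disc bubbling at the special points $u_i(\pm 1)$ and $u_{\ell}(1)$ is \emph{not} excluded by the codimension-two argument (unlike side bubbling at a non-special boundary point, it does not impose an extra incidence condition, so it is a genuine codimension-one face), and it does \emph{not} contribute to $\dd\circ\iota^{\tau_i}_{(a,b)}$, since the pearly differential only counts configurations broken at a critical point of $f$. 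If you fold these faces into $\dd\circ\iota^{\tau_i}_{(a,b)}$ as in your (b), the chain-map identity (\ref{eq:chainmap}) acquires spurious terms and fails as stated.

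The missing ingredient, which is the heart of the paper's argument, is a sixth boundary type: Morse-trajectory shrinking, where the gradient segment between $u_i$ and $u_{i+1}$ (or between $u_{\ell-1}$ and the half-cylinder) collapses so that the marked points collide. There is a canonical bijection between these shrinking faces and the faces where a disc bubbles off at $u_i(\pm1)$ or $u_{\ell}(1)$, and an orientation analysis (Section A.2.1 of \cite{biran-c1}) shows the two members of each pair contribute with opposite signs, so their total contribution is zero; only after this cancellation do the remaining faces assemble into $\iota^{\tau_i}_{(a,b)}\circ\dd^{fl}$ and $\dd\circ\iota^{\tau_i}_{(a,b)}$, in dimension zero giving well-definedness and in dimension one the descent to homology. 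You flag this as your anticipated obstacle, but the proposal as written neither lists the shrinking faces nor supplies the pairing/cancellation, so the key step of the proof is not actually carried out.
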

\begin{proof}
The techniques sketched to prove this Lemma appear in detail in \cite{biran-c1} and \cite{biran-c2}.

We first check that $\iota^{\tau_i}_{(a, b)}$ respects the action filtration.  A standard computation shows that, if $(u_1, ..., u_{\ell})\in\cg{M}^0(p, x; H^{\tau_i})$, then the energy of $u_{\ell}$ is
\begin{align*}
E(u_{\ell}) = -\omega(\tilde{x}\#-u_{\ell}) - \cg{A}_{H^{\tau_i}}(x) + \int_{\RR\times S^1}(\dd_sH_s^{\tau_i})(u_{\ell})dsdt. 
\end{align*}
Since the energy is always non-negative,
\begin{align*}
\cg{A}_{H^{\tau_i}}(x) - \int_{\RR\times S^1}(\dd_sH_s^{\tau_i})(u_{\ell})dsdt &\leq -\omega(\tilde{x}\#-u_{\ell}), \text{ so} \\
\cg{A}_{H^{\tau_i}}(x) &\leq \omega(-\tilde{x}\#u_{\ell})
\end{align*}
by the assumption that $H_s^{\tau_i}$ is monotone decreasing in $s$.  Finally, $\gamma\in H^1(L; U_{\Lambda})$ implies that $\la\gamma, w(0, \cdot)\ra\in U_{\Lambda}$, and each $u_i$ is $J$-holomorphic, so  $\omega(u_i) \geq 0$.  It follows that 
\[
T^{\omega({\bm u})}\la\gamma, w(0, \cdot)\ra\in \Lambda_{\omega(-\tilde{x}\#u_{\ell})}.
\]

Let $\cg{S}$ be a stratum of the set of maps $\{{\bm u} = (u_1, ..., u_{\ell}) \in\cg{M}(p, x; H^{\tau_i})\}$ of fixed homology class $A$: $[-\tilde{x}\#u_{\ell}]+[u_1]+...+[u_{\ell}-1]=A \in \bigslant{H_2(M, L\cup\beta)}{\ker(\omega)}$, where $\beta$ is the fixed representative in $[x]$ (see Subsection \ref{set-up}).  Assume that $\dim(\cg{S})\leq 1$, as these are the only strata contributing to the study of $\iota^{\tau_i}_{(a, b)}$.  The transversality for pearly trajectories proved in Section 3 of \cite{biran-c2} and the transversality for half-tubes discussed in \cite{albers} show that $\cg{S}$ is cut out transversely whenever the virtual dimension of $\cg{S}$ is less than or equal to $1$, and thus, by regularity, whenever $\dim(\cg{S})\leq 1$.  We therefore only need to show that bubbling does not contribute to compactification.  

There are six types of limit points that, a priori, contribute to the compactification of $\cg{S}$ (see Figure \ref{fig:bubbling}).
\begin{enumerate}[label=\alph*)]
\item cylinder breaking contributing to $\iota^{\tau_i}_{(a, b)}\circ\dd^{fl}$, \\
\item pearly-trajectory breaking contributing to $\dd\circ \iota^{\tau_i}_{(a, b)}$ \\
\item sphere-bubbling, \\
\item side-bubbling, where a disc bubbles off at a boundary point $u_i(q)$, where $q\neq \pm 1$ if $i< \ell$ and $q \neq 1$ if $i = \ell$, \\
\item disc bubbling at $q = u_i(\pm 1)$ (when $i < \ell$) or at $q = u_{\ell}(1)$, and \\
\item Morse-trajectory shrinking, where the trajectory between some $u_i$ and $u_{i+1}$ collapses, causing $u_i(-1)$ and $u_{i+1}(1)$ to collide.
\end{enumerate}

\begin{figure}
\centering
\includegraphics[scale=.5]{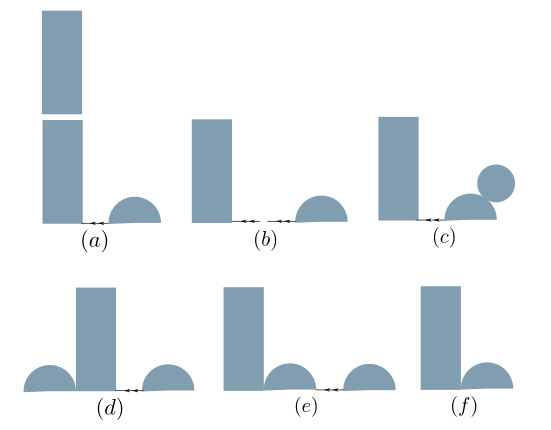}
\caption{Degenerations of $\cg{S}$}
\label{fig:bubbling}
\end{figure}
  
It thus suffices to show that, if $\cg{S}$ has dimension less than 2, the sum contribution of types (c), (d), (e), and (f) is zero.

We first tackle (c) and (d).  By monotonicity and orientability of $L$, the virtual dimension of side-bubbling or sphere-bubbling is at least 2.  If either occurs in a limit, then the other component of the limit is a stratum $\cg{S}'$ of $\cg{M}(p, x; H^{\tau_i})$ of virtual dimension two less than the virtual dimension of $\cg{S}$.  But by regularity this implies that $\dim(\cg{S}') \leq \dim(\cg{S}) - 2 < 0$.  This shows that (c) and (d) cannot occur.

There is a canonical bijection between elements of type (e) and elements of type (f).  An analysis of signs shows that an element of type (e) contributes with the opposite sign of its type (f) partner.  See Section A.2.1 in \cite{biran-c1} for a careful treatment of signs.  Thus, counting the limit points of both types (e) and types (f) yields zero.

If $\dim(\cg{S}) = 0$ then limits of types (a) and (b) cannot occur for index reasons, proving that $\iota^{\tau_i}_{(a, b)}$ is well-defined.

If $\dim(\cg{S}) = 1$ then the analysis of the boundary yields the equivalence 
\begin{equation}
\label{eq:chainmap}
\iota^{\tau_i}_{(a, b)}\circ \dd^{fl}(x) = \dd\circ\iota^{\tau_i}_{(a, b)}(x),
\end{equation}
as desired.

\end{proof}

Let $(\iota^{\tau_i}_{(a, b)})^*: HF^*_{(a, b)}(H^{\tau_i}; \Gamma)\longrightarrow HF^*_{(a, b)}(L, E_{\gamma})$ be the descent of $\iota^{\tau_i}_{(a, b)}$ to cohomology.  
\begin{lemma}
The collection of maps $\left\{(\iota^{\tau_i}_{(a, b)})^*\right\}_{i\in\NN}$ induces a map $\lim\limits_{\substack{\longrightarrow \\ i}} HF^*_{(a, b)}(H^{\tau_i}; \Gamma)\longrightarrow HF^*_{(a, b)}(L, E_{\gamma})$.
\label{lem:descent}
\end{lemma}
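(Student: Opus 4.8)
The plan is to verify the one condition that the universal property of a directed colimit requires: that the maps $(\iota^{\tau_i}_{(a,b)})^*$ are compatible with the continuation maps $c^i$ that define the directed system, i.e. $(\iota^{\tau_{i+1}}_{(a,b)})^*\circ (c^i)^* = (\iota^{\tau_i}_{(a,b)})^*$ on $HF^*_{(a,b)}(H^{\tau_i};\Gamma)$. I would prove this at chain level by producing an explicit chain homotopy, respecting the action filtration, between $\iota^{\tau_{i+1}}_{(a,b)}\circ c^i$ and $\iota^{\tau_i}_{(a,b)}$ — the standard ``homotopy of homotopies'' argument.

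First I would recall that $c^i$ is defined by counting Floer cylinders for an $s$-dependent Hamiltonian interpolating monotonically from $H^{\tau_i}$ at $s=+\infty$ to $H^{\tau_{i+1}}$ at $s=-\infty$, while $\iota^{\tau_{i+1}}_{(a,b)}$ counts half-cylinders-with-pearls for $H^{\tau_{i+1}}$; gluing, the composite $\iota^{\tau_{i+1}}_{(a,b)}\circ c^i$ is computed by a broken configuration consisting of pearls, followed by an element of some $\cg{M}(x',L;H^{\tau_{i+1}})$, followed by a continuation cylinder. Then I would introduce a parameter $\rho\in[0,1]$ and a two-parameter family $\{H_{s,\rho},J_{s,\rho}\}$ of Hamiltonians and cylindrical almost-complex structures on the half-cylinder $[0,\infty)\times S^1$ with boundary on $L$, arranged so that at $\rho=1$ the equation degenerates (after a neck-stretching) to the glued picture just described, at $\rho=0$ it is exactly the equation defining $\iota^{\tau_i}_{(a,b)}$, and $H_{s,\rho}$ remains monotone decreasing in $s$ with the $\rho$-dependence chosen so that the action estimate in the proof of Lemma \ref{lemma:discs} holds uniformly in $\rho$. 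Writing $\cg{M}_\rho(p,x)$ for the associated parametrized moduli space of (pearls followed by a parametrized half-cylinder) and $K^i_{(a,b)}$ for the $T$-weighted, $\gamma$-twisted count of its rigid elements, exactly as $\iota^{\tau_i}_{(a,b)}$ was defined, I would analyze the compactification of the one-dimensional strata of $\cg{M}_\rho(p,x)$ verbatim as in Lemma \ref{lemma:discs}: transversality for the pearly part is \cite{biran-c2} and for the half-tube part \cite{albers}; sphere- and side-bubbling have virtual codimension $\geq 2$ by monotonicity and orientability of $L$; disc bubbling at the distinguished marked points cancels in pairs by the sign analysis of \cite{biran-c1}; Morse-trajectory shrinking cancels with the breaking terms. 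The residual boundary is (i) the fiber over $\rho=0$, namely $\iota^{\tau_i}_{(a,b)}$; (ii) the fiber over $\rho=1$, namely $\iota^{\tau_{i+1}}_{(a,b)}\circ c^i$; and (iii) cylinder breaking at the top and pearly breaking at the bottom, contributing $K^i_{(a,b)}\circ\dd^{fl}$ and $\dd\circ K^i_{(a,b)}$. Counting signed boundary points of this compact one-manifold, just as the chain-map relation (\ref{eq:chainmap}) was obtained, yields
\[
\iota^{\tau_{i+1}}_{(a,b)}\circ c^i - \iota^{\tau_i}_{(a,b)} = \dd\circ K^i_{(a,b)} + K^i_{(a,b)}\circ\dd^{fl},
\]
and the uniform action estimate shows $K^i_{(a,b)}$ descends to the $(a,b)$-truncated complexes. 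Passing to cohomology gives $(\iota^{\tau_{i+1}}_{(a,b)})^*\circ(c^i)^* = (\iota^{\tau_i}_{(a,b)})^*$, so the family $\{(\iota^{\tau_i}_{(a,b)})^*\}$ factors through a unique map out of $\lim\limits_{\substack{\longrightarrow \\ i}}HF^*_{(a,b)}(H^{\tau_i};\Gamma)$.

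I expect the main obstacle to be the familiar one for such arguments: choosing $\{H_{s,\rho},J_{s,\rho}\}$ so that the one-dimensional strata of the parametrized moduli space are cut out transversely even when the discs are multiply covered — this is precisely where one must appeal to the somewhere-injectivity results of Biran--Cornea for the low-dimensional strata rather than to a generic perturbation — and so that the no-escape argument (Condition (\ref{cond:noescape}) and the integrated maximum principle) still confines every relevant curve to $\widetilde{M}$ uniformly in $\rho$, preserving Gromov compactness. Arranging the $\rho$-dependence so that the monotonicity-in-$s$ and hence the action inequality of Lemma \ref{lemma:discs} survive uniformly is a routine but necessary bookkeeping step, and independence of $K^i_{(a,b)}$ (up to a further homotopy) from the auxiliary choices follows by the same method applied one dimension higher.
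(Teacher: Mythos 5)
Your proposal is correct and follows essentially the same route as the paper: the paper also proves the compatibility $(\iota^{\tau_{i+1}}_{(a,b)})^*\circ(c^i)^*=(\iota^{\tau_i}_{(a,b)})^*$ by a parametrized ("homotopy of homotopies") moduli space of pearls-plus-half-cylinders, with the gluing parameter $\mathfrak{s}\in[0,\infty)$ playing the role of your $\rho\in[0,1]$ (the broken configuration computing $\iota^{\tau_{i+1}}_{(a,b)}\circ c^i$ appearing in the limit $\mathfrak{s}\to\infty$), the chain homotopy $S$ defined by counting the rigid virtual-dimension $-1$ configurations at interior parameter values, and bubbling ruled out exactly as in Lemma \ref{lemma:discs}. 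Your explicit attention to the uniform action estimate and to transversality via Biran--Cornea matches the hypotheses the paper imposes on the family of Hamiltonians, so no gap.
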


\begin{proof}
We must show that $(\iota^{\tau_{i+1}}_{(a, b)})^*\circ(c^i)^* =(\iota^{\tau_i}_{(a, b)})^*$.  We will do this by finding a chain homotopy $S$ such that
\[
\iota^{\tau_{i+1}}_{(a, b)}\circ c^i - \iota^{\tau_i}_{(a, b)} = S\circ\dd^{fl} + \dd\circ S.
\]
Let $x\in \cg{P}(H^{\tau_i})$.  Choose a regular homotopy $\{H_s\}_{s\in\RR}$ with $H_s = H^{\tau_i}$ when $s > 0$ and $H_s = H^{\tau_{i+1}}$ when $s < -1$.

Choose a generic smooth family of Hamiltonians $H:[0, \infty)\times\RR\times S^1\times M\longrightarrow \RR$ such that $H\big|_{\{\mathfrak{s}\}\times\{s\}\times S^1\times M}$ is equal to $H^{\tau_{i+1}}_s$ when $s < \mathfrak{s} - 1$ and $H^{\tau_i}_s$ when $s > \mathfrak{s}$.  Assume that, for $s\in[-1, 0]$,
\[
\lim\limits_{\mathfrak{s}\rightarrow\infty}\left(H(\mathfrak{s}, s+\mathfrak{s}, t, x) - H_s(t, x)\right) = 0.
\]

Choose a generic $[0, \infty)$-family of domain-dependent cylindrical almost-complex structures $J_{s, \mathfrak{s}}$.  Fix $p\in\Crit(f)$.  Let $\cg{M}^1(p, x, A; H)$ be the one-dimensional strata of the space of tuples $\left(\mathfrak{s}, (u_1, ..., u_{\ell})_{\mathfrak{s}}\right)$, where $\mathfrak{s}\in[0, \infty)$ and $(u_1, ..., u_{\ell})_{\mathfrak{s}}\in\cg{M}^0(p, x, A; H_{\mathfrak{s}})$.
Also require that $[-\tilde{x}\#u_{\ell}]_{\mathfrak{s}}+[u_1]_{\mathfrak{s}}+...+[u_{\ell}]_{\mathfrak{s}} = A$ is a fixed class in $\bigslant{H_2(M, L\cup\beta)}{\ker(\omega)}$ for every $\mathfrak{s}$, where $\beta$ is the fixed representative in $[x]$ (see Subsection \ref{set-up}).  This is a 1-dimensional manifold with a compactification given by cylinder-breaking and disc-bubbling.  A priori, the (0-dimensional) boundary components of the compactification take one of six forms.
\begin{enumerate}[label=\alph*)]
\item On the boundary $\mathfrak{s}=0$ appears the elements of the moduli space $\cg{M}^0(p, x, A; H^{\tau_i})$.  This corresponds to the part of $\iota^{\tau_i}_{(a, b)}(\zeta_x)$ that contributes terms of action $\omega(A)$, twisted by $\gamma$.
\item In the limit $\mathfrak{s}\longrightarrow\infty$ appear elements of the product
\[
\bigsqcup\limits_{\substack{z\in\cg{P}(H^{\tau_{i+1}}) \\ B\#C = A}}\cg{M}^0(p, z, B; H^{\tau_{i+1}})\times \cg{M}^0(z, x, C; H_s),
\]
where $\cg{M}^0(z, x, C; H_s)$ is the space of index-0 Floer solutions between $x$ and $z$ induced by the family of Hamiltonians $H_s$, and contributing terms in `$T^{\omega(C)}o_z$' to $c^i(\zeta_x)$.  This product contributes terms of action $\omega(A)$ to $\iota^{\tau_{i+1}}_{(a, b)}\circ c^i(\zeta_x)$, twisted by $\gamma$.
\item The moduli space can degenerate at an interior point $\mathfrak{s}\in(0, \infty)$, and near $s= \infty$, yielding elements of the form
\[
\bigsqcup\limits_{\substack{z\in\cg{P}(H^{\tau_{i}}) \\ \mu(z) - \mu(x) = 1 \\ B\#C = A}}\cg{M}^{-1}(p, z, B; H^{\tau_{i}}_{\mathfrak{s}})\times \cg{M}^0(z, x, C; H^{\tau_{i}}),
\]
where $\cg{M}^{-1}(p, z, B; H^{\tau_{i}}_{\mathfrak{s}})$ is the moduli space of rigid Floer/pearly trajectory amalgamates of virtual dimension $-1$ that can occur between $z$ and $p$ if $H_{\mathfrak{s}}$ is not regular (restricted, of course, to the relative homology class $B$).
\item The moduli space can degenerate at an interior point $\mathfrak{s}\in(0, \infty)$ and within a pearly trajectory, yielding elements of the form
\[
\bigsqcup\limits_{\substack{q\in\Crit(f) \\ B\#C = A \\ |p| - |q| - \mu(C) = 0 }}\cg{M}^0(p, q, C)\times\cg{M}^{-1}(q, x, B; H^{\tau_{i}}_{\mathfrak{s}}).
\]
\item Finally, bubbling may occur.  However, as in the proof of Lemma \ref{lemma:discs}, the contribution of disc and sphere bubbling is zero.
\end{enumerate}
Standard gluing techniques show that the degenerations of types (a) -- (d) do indeed appear.  In the notation of equations (\ref{eq:omega-u}) and (\ref{eq:bound-u}), define $S$ on generators by
\begin{align*}
S: \bigoplus_{i\in\NN}CF^*_{(a, b)}(H^{\tau_i}; \Gamma)&\longrightarrow CF^*_{(a, b)}(L, E_{\gamma}) \\
\zeta_x &\mapsto \sum_{\substack{\mathfrak{s}\in(0, \infty) \\ B\in H_2(M, L) \\ q\in\Crit(f)}}\sum_{{\bm u}\in\cg{M}^{-1}(q, x, B; H^{\tau_i}_{\mathfrak{s}})}d_{{\bm u}}(\zeta_x)T^{\omega({\bm u})}\la \gamma, [\dd{\bm u}]\ra\cdot q
\end{align*}
and extend $T$-linearly.

As the limiting degenerations at $\mathfrak{s} = \infty$ are regular it follows from Gromov compactness that there are finitely many degenerations of types (c), (d), and (e), and so $S$ is well-defined.

Counting boundary components of type (c) yields the ``$T^{\omega(A)}$'' component of $S\circ\dd^{fl}$ and counting boundary components of type (d) yields the ``$T^{\omega(A)}$'' component of $\dd\circ S$.  From this we deduce that
\[
\iota^{\tau_{i+1}}_{(a, b)}\circ c^i - \iota^{\tau_i}_{(a, b)} = S\circ\dd^{fl} + \dd\circ S.
\]
\end{proof}

There is a surjective map
\[
\Psi: \widehat{SH^*}(M; \Lambda)\longrightarrow \lim_{\substack{\longrightarrow \\ a}}\lim_{\substack{\longleftarrow \\ b}}\lim_{\substack{\longrightarrow \\ i}} HF^*_{(a, b)}(H^{\tau_i}; \Gamma),
\]
defined using the natural commutativity of direct limits with cohomology and the projection $H^*\left(\lim\limits_{\substack{\longleftarrow\\ b}}SC^*_{(a, b)}(M; \Gamma)\right)\longrightarrow\lim\limits_{\substack{\longleftarrow \\ b}} SH^*_{(a, b)}(M; \Gamma)$ that appears in the Milnor exact sequence
\[
0\longrightarrow\lim_{\substack{\longleftarrow \\ b}}{}^1SH^*_{(a, b)}(M)\longrightarrow H^*\left(\lim\limits_{\substack{\longleftarrow\\ b}}SC^*_{(a, b)}(M; \Gamma)\right)\longrightarrow\lim_{\substack{\longleftarrow \\ b}} SH^*_{(a, b)}(M; \Gamma)\longrightarrow 0.
\]
Under the equivalence $HF^*(L, E_{\gamma})\cong \lim\limits_{\substack{\longrightarrow \\ a}}\lim\limits_{\substack{\longleftarrow \\b}}HF^*_{(a, b)}(L, E_{\gamma})$ proved in Lemma \ref{lem:equiv-quantum}, let
\[
\widehat{\iota^*}:\lim_{\substack{\longrightarrow \\ a}}\lim_{\substack{\longleftarrow \\ b}}\lim_{\substack{\longrightarrow \\ i}} HF^*_{(a, b)}(H^{\tau_i}; \Gamma)\longrightarrow HF^*(L, E_{\gamma})
\]
be the induced map formed by taking limits over the maps $\left\{\iota^{\tau_i}_{(a, b)}\right\}_{i, a, b}$.  Define
\[
\widehat{\cg{I}^*} = \widehat{\iota^*}\circ\Psi: \widehat{SH^*}(M; \Lambda)\longrightarrow HF^*(L, E_{\gamma}).
\]

\begin{lemma}
The map $\widehat{\cg{I}^*}$ is non-vanishing.
\label{lem:nonvanish}
\end{lemma}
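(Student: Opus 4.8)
The plan is to show that the composite $\widehat{\cg{I}^*} = \widehat{\iota^*}\circ\Psi$ sends the unit of $\widehat{SH^*}(M;\Lambda)$ to the unit of $HF^*(L, E_{\gamma})$, which is nonzero by hypothesis and Remark \ref{rmk:quantum-neq0}. Since $\widehat{SH^*}(M;\Lambda)$ is a unital ring, its unit is nonzero, and a unital ring map out of it is automatically non-vanishing; the real content is tracing the unit through all the limits. First I would identify the unit of $\widehat{SH^*}(M;\Lambda)$ at the chain level: since each $\cg{H}$ is $\cg{C}^2$-small, the bottom Hamiltonian $H^{\tau_0}$ has its time-one orbits the critical points of a Morse function, and the standard PSS-type argument identifies the image of the fundamental class under $H^*(M)\to \widehat{SH^*}(M;\Lambda)$ as the unit. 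Concretely, the unit is represented by a cocycle supported (modulo higher-action correction terms forced by the completion) on the maximum of $\cg{H}$ in the $i=0$ summand.

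Next I would track this class under $\Psi$. By construction $\Psi$ is the composite of the natural map to $\lim_b SH^*_{(a,b)}$ (from the Milnor sequence) with the identification $SC^*_{(a,b)}(M;\Gamma)\to$ the cone and then projection onto the $i=0$ component followed by $\lim_i$; concretely, $\Psi$ of the unit lands in $\lim_a\lim_b\lim_i HF^*_{(a,b)}(H^{\tau_i};\Gamma)$ as the class of the unit of each $HF^*_{(a,b)}(H^{\tau_i};\Gamma)$, compatibly under the continuation maps $(c^i)^*$ (these are unital). Then I would compute $\widehat{\iota^*}$ on this class. By Lemma \ref{lem:descent} it suffices to compute a single $(\iota^{\tau_i}_{(a,b)})^*$ on the unit; and since the $\cg{C}^2$-small Hamiltonian $\cg{H}$ is $C^0$-small near $L$ and can be taken with $H^{\tau_i}_s\equiv 0$ near $L$ for small $s$, the moduli spaces $\cg{M}(x, L; H^{\tau_i})$ with $x$ the maximum degenerate — for the rigid count contributing to the image of the unit — to constant half-cylinders at points of $L$ together with pearly trajectories, i.e. precisely the moduli spaces computing the PSS map $H^*(M)\to QH^*(L)\cong HF^*(L, E_\gamma)$, or even more simply the moduli spaces whose count is the image of the unit under the unital ring map of \cite{ritter-s}, equation (6.4). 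Hence $(\iota^{\tau_i}_{(a,b)})^*$ sends the unit to the unit (with possibly higher-action corrections in $\Lambda_{>0}\cdot\mathfrak{m}$, which do not affect the leading term), so $\widehat{\cg{I}^*}(1) = 1 \neq 0$.

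The main obstacle is the chain-level bookkeeping under the completion: because $\widehat{SC^*}(M;\Lambda)$ consists of Novikov-type infinite sums, the ``unit'' is not literally a single generator but a cocycle whose leading ($T^0$) term is the maximum and whose tail lies in strictly positive action; I must check that $\widehat{\iota^*}$ preserves this valuation filtration (which follows from the action estimate $\cg{A}_{H^{\tau_i}}(x)\le \omega(-\tilde x\# u_\ell)$ established in the proof of Lemma \ref{lemma:discs}, together with $\omega(u_j)\ge 0$ and $\la\gamma,\cdot\ra\in U_\Lambda$) and that the leading term of the image is the unit $\mathfrak{m}$ of $HF^*(L, E_\gamma)$ with coefficient $1$, so that it cannot be killed by the tail. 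Concretely: the unique rigid half-cylinder of symplectic area $0$ asymptotic to the maximum with boundary on $L$ is the constant one (using $H^{\tau_i}_s\equiv 0$ near $L$ and the $C^0$-smallness of $\cg{H}$), it contributes $+\mathfrak{m}$, and all other contributions carry strictly positive powers of $T$; hence $\widehat{\cg{I}^*}(1)$ has nonzero image in $HF^*(L, E_\gamma)$ by Remark \ref{rmk:quantum-neq0}. A clean alternative, avoiding a from-scratch PSS computation, is to observe that $\widehat{\cg{I}^*}$ factors the ring map $SH^*(M;\Lambda)\to HF^*(L;\Lambda)$ of \cite{ritter-s} through the completion (both are defined by counting the same half-cylinder-with-pearls configurations), so unitality of $\widehat{\cg{I}^*}$ is inherited from unitality of that map; I would then just need the compatibility of the completion map $SH^*\to\widehat{SH^*}$ with units, which is immediate from the construction.
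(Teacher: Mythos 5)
Your overall strategy is the paper's: reduce to showing that the half-cylinder-with-pearls map sends a PSS-type ``unit'' class to the minimum $\mathfrak{m}$, which represents the unit of $HF^*(L, E_{\gamma})$ and is nonzero by Remark \ref{rmk:quantum-neq0}, then push this through the action and continuation limits via Lemma \ref{lem:descent}. But the pivotal step --- that the area-zero part of the image is exactly $\mathfrak{m}$ with coefficient $1$ --- is where your justification breaks down. You claim that ``the unique rigid half-cylinder of symplectic area $0$ asymptotic to the maximum with boundary on $L$ is the constant one.'' The relevant asymptote is a constant orbit of the $\cg{C}^2$-small Hamiltonian $\cg{H}$, and there is no reason for that point to lie on $L$; a constant half-cylinder asymptotic to it does not have boundary on $L$, and the actual configurations contributing at leading order are not constant. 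What the paper does instead is work with the explicit PSS cycle $Z\in CF^*(H^{\tau_0};\Lambda)$ counting caps, glue the caps to the half-cylinders so that the relevant moduli space $\cg{M}^0(p, A)$ consists of pearly trajectories ending in a (possibly constant) $J$-holomorphic disc with a free interior marked point, and then run a rigidity analysis: a nonconstant last disc is never rigid, and a constant one based at a critical point $p\neq\mathfrak{m}$ is never rigid either, so the total count is exactly $\{\mathfrak{m}\}$; a chain homotopy as in Lemma \ref{lem:descent} then gives $\iota^{\tau_0}(Z) = \mathfrak{m} + \dd S(Z)$ on the nose. Your ``degeneration to the PSS moduli spaces'' gestures at this, but the rigidity argument is the content you would still have to supply; likewise, the alternative of factoring the Ritter--Smith map of \cite{ritter-s} through the completion is not ``immediate'' --- the compatibility you would need is essentially this same computation.

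A secondary issue: you lean on $\widehat{SH^*}(M;\Lambda)$ being a unital ring and on identifying its unit at the chain level (``supported on the maximum of $\cg{H}$ modulo completion tails''), neither of which is established in the paper, and you then need a valuation-filtration argument to rule out the tail killing $[\mathfrak{m}]$. None of this is necessary: since $\Psi$ is surjective, it suffices to show $\widehat{\iota^*}\neq 0$, and the paper does so by feeding the explicit class $[Z]$ (defined at the level of $HF^*(H^{\tau_0};\Lambda)$, before any completion subtleties) through the commutative diagrams relating truncated, completed, and colimit versions; at the cohomology level its image is exactly the unit $[\mathfrak{m}]$, so no leading-term/tail analysis is needed. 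Rewriting your argument around such an explicit cycle, rather than around a putative unit of the completed ring, would both close the second issue and force you to confront the gluing-plus-rigidity computation that is the real heart of the lemma.
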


\begin{proof}
As $\Psi$ is a surjection, it suffices to prove that $\widehat{\iota^*}$ is non-vanishing.

Let $H_s$ be an $\RR$-family of Hamiltonians that is equal to $H^{\tau_0}$ when $s << 0$ and identically zero when $s >> 0$.  Choose a generic $\RR$-family of cylindrical almost-complex structures $J_s$.  Let $\cg{M}^0(x)$ be the set of rigid maps $w:\RR\times S^1 \cup\{\infty\}\longrightarrow M$ satisfying $\dd_sw + J_s(\dd_tw -X_{H_s}) = 0$, and such that $\lim\limits_{s\rightarrow-\infty}w(s, \cdot) = x(\cdot)$.  Each $w$ determines an element $\zeta_x^w\in o_x$.  Let
\[
Z = \sum\limits_{\substack{ x \in \cg{P}(H^{\tau_0}) \\ w \in \cg{M}^0(x)}} T^{\omega(\tilde{x}\#w)}\cdot \zeta_x^w \in CF^*(H^{\tau_0}; \Lambda).
\]
The usual analysis of the boundary of a dimension-one moduli space of curves shows that $Z$ is a well-defined cycle.

Recall that we chose the Morse function $f$ on $L$ to have a unique minimum $\mathfrak{m}$.  For $p\in\Crit(f)$, let $\cg{M}^{0}(p, A)$ be the space of rigid pearly trajectories $\{(u_1, ..., u_{\ell})\}_{\ell\geq 1}$, defined in the same way as $\cg{M}^0(p, x, A; H^{\tau_0})$, but where $u_{\ell}$ is also now a (possibly constant) $J$-holomorphic disc with one interior marked point.  If $u_{\ell}$ is not constant, the sequence $(u_1, ..., u_{\ell})$ is not rigid.  If $u_{\ell}$ is constant and $p\neq\mathfrak{m}$ then either
\begin{enumerate}
\item there is a non-constant gradient trajectory $\beta(t)$ with $\beta(0) = Im(u_{\ell})$, in which case sliding the image of $u_{\ell}$ along $\beta(t)$ shows that $(u_1, ..., u_{\ell})$ is not rigid, or
\item $Im(u_{\ell})\in\Crit(f)\setminus\mathfrak{m}$.  Then $Im(u_{\ell}) = p$, and $u_{\ell}$ sits inside a strata of $\cg{M}(p, A)$ containing the moduli space
\[
\{(u_1); u_1 \text{ is a constant disc mapping into the unstable manifold of } p\},
\]
which, as $\mathfrak{m}$ is the unique minimum, precludes the rigidity of $u_{\ell}$.
\end{enumerate}
Thus, 
\[
\bigsqcup\limits_{\substack{p\in\Crit(f)\\ A\in H_2(M, L)}}\cg{M}^{0}(p, A) = \{\mathfrak{m}\}.
\]

Define $\iota^{\tau_0}: CF^*(H^{\tau_0}; \Gamma)\longrightarrow CF^*(L, E_{\gamma})$ analogously to $\iota^{\tau_0}_{(a, b)}$, but without truncating by action.  As in the proof of Lemma \ref{lem:descent},
\[
\iota^{\tau_0}(Z) = \mathfrak{m} + \dd\circ S(Z)
\]
for some chain homotopy $S$, and so $(\iota^{\tau_0})^*([Z]) = [\mathfrak{m}]$.  

By assumption, $HF^*(L, E_{\gamma})$ is a non-zero unital ring with unit represented by $\mathfrak{m}$ (see Remark \ref{rmk:quantum-neq0}).  It follows that $(\iota^{\tau_0})^*(Z)\neq 0$.

As $Z$ is a cycle, $\iota^{\tau_0}$ descends to a non-trivial map $(\iota^{\tau_0})^*$ on homology.  We will show that the non-vanishing of this map implies Lemma \ref{lem:nonvanish}.

Let $\widehat{\iota^{\tau_0}} = \lim\limits_{\substack{\longrightarrow \\ a}}\lim\limits_{\substack{\longleftarrow \\b}}(\iota_{(a, b)}^{\tau_0})$.  Analogously to Lemma \ref{lem:equiv-quantum} there is a quasi-isomorphism 
\[
\lim\limits_{\substack{\longrightarrow \\ a}}\lim\limits_{\substack{\longleftarrow \\b}}HF^*_{(a, b)}(H^{\tau_0}; \Gamma) \cong HF^*(H^{\tau_0}; \Lambda).
\]
This isomorphism induces a commutative diagram
\[
\begin{tikzcd}
\lim\limits_{\substack{\longrightarrow \\ a}}\lim\limits_{\substack{\longleftarrow \\b}}HF^*_{(a, b)}(H^{\tau_0}; \Gamma) \arrow{d}{(\widehat{\iota^{\tau_0}})^*}\arrow{r}{\simeq} & HF^*(H^{\tau_0}; \Lambda) \arrow{d}{(\iota^{\tau_0})^*}\\
\lim\limits_{\substack{\longrightarrow \\ a}}\lim\limits_{\substack{\longleftarrow \\b}}HF^*_{(a, b)}(L, E_{\gamma})\arrow{r}{\simeq} & HF^*(L, E_{\gamma})
\end{tikzcd}
\]
It follows that $(\widehat{\iota^{\tau_0}})^*\neq 0$.

The inclusion $CF^*_{(a, b)}(H^{\tau_0}; \Gamma)\hookrightarrow \lim\limits_{\substack{\longrightarrow \\ i}} CF^*_{(a, b)}(H^{\tau_i}; \Gamma)$ induces a map 
\[
\lim\limits_{\substack{\longrightarrow \\ a}}\lim\limits_{\substack{\longleftarrow \\b}}HF^*_{(a, b)}(H^{\tau_0}; \Gamma)\longrightarrow \lim\limits_{\substack{\longrightarrow \\ a}}\lim\limits_{\substack{\longleftarrow \\b}}\lim\limits_{\substack{\longrightarrow \\ i}} HF^*_{(a, b)}(H^{\tau_i}; \Gamma).
\]
By the construction of $\widehat{\iota^{\tau_0}}$, the following diagram commutes
\[
\begin{tikzcd}
\lim\limits_{\substack{\longrightarrow \\ a}}\lim\limits_{\substack{\longleftarrow \\b}}HF^*_{(a, b)}(H^{\tau_0}; \Gamma) \arrow{dr}[left]{\widehat{(\iota^{\tau_0}})^*\hspace{10pt}} \arrow{r} & \lim\limits_{\substack{\longrightarrow \\ a}}\lim\limits_{\substack{\longleftarrow \\b}}\lim\limits_{\substack{\longrightarrow \\ i}} HF^*_{(a, b)}(H^{\tau_i}; \Gamma) \arrow{d}{\widehat{\iota^*}} \\
& HF^*(L, E_{\gamma})
\end{tikzcd}
\]
from which we deduce that $\widehat{\iota^*}$, and therefore $\widehat{\cg{I}^*}$, is non-vanishing.

\end{proof}

We are now in position to prove Theorem \ref{thm:nonvanish}.

\begin{proof}
The long-exact sequence (\ref{diag:les-cob}) induces a short-exact sequence
\[
0\longrightarrow \widehat{SH_*}(V; \Lambda)/Ker(\mathfrak{i}^*)\longrightarrow \widehat{SH^*}(M; \Lambda)\longrightarrow Im(q^*)\longrightarrow 0
\]
where $Im(q^*)\subset \widehat{SH^*}(W; \Lambda).$

We want to show that $\widehat{\cg{I}^*}$ factors through $Im(q^*).$  By the universal property of quotients, it suffices to show that $\widehat{\cg{I}^*}\circ \mathfrak{i}^*$ is zero on $\widehat{SH_*}(V; \Lambda).$  Fix $A\in\RR$.  The Mittag-Leffler condition is trivially satisfied for each inverse system $\{SC_*^{(A, b)}(V; \Gamma)\}_b$ and $\{\Lambda_{(A, b)}\}_b$ (the maps defining the inverse systems are surjective).  To each inverse system is therefore associated a Milnor $\lim^1$ short exact sequence.  By naturality of this sequence, and as direct limits preserve exactness, there is a commutative diagram of short exact sequences.
\[
\begin{tikzcd}
0 \arrow{r} &\lim\limits_{\substack{\longrightarrow \\ a}}\lim\limits_{\substack{\longleftarrow \\b}}{}^1SH_*^{(a, b)}(V; \Lambda) \arrow{r} \arrow{d}{\mathfrak{i}^*} & \widehat{SH_*}(V; \Lambda) \arrow{r} \arrow{d}{\mathfrak{i}^*}&\lim\limits_{\substack{\longrightarrow \\ a}}\lim\limits_{\substack{\longleftarrow \\b}}SH_*^{(a, b)}(V; \Lambda) \arrow{r} \arrow{d}{\mathfrak{i}^*} & 0 \\
0 \arrow{r} &\lim\limits_{\substack{\longrightarrow \\ a}}\lim\limits_{\substack{\longleftarrow \\b}}{}^1SH^*_{(a, b)}(M; \Lambda) \arrow{r} \arrow{d} & \widehat{SH^*}(M; \Lambda) \arrow{r} \arrow{d}{\widehat{\cg{I}^*}}&\lim\limits_{\substack{\longrightarrow \\ a}}\lim\limits_{\substack{\longleftarrow \\b}}SH^*_{(a, b)}(M; \Lambda) \arrow{r} \arrow{d}{\widehat{\iota^*}} & 0 \\
0 \arrow{r} & 0  \arrow{r} & HF^*(L, E_{\gamma}) \arrow{r}{\simeq} & \lim\limits_{\substack{\longrightarrow \\ a}}\lim\limits_{\substack{\longleftarrow \\b}}HF^*_{(a, b)}(L, E_{\gamma})\arrow{r} & 0
\end{tikzcd}
\]
It thus suffices to show that the map $\lim\limits_{\substack{\longrightarrow \\ a}}\lim\limits_{\substack{\longleftarrow \\b}}SH_*^{(a, b)}(V; \Lambda)\longrightarrow\lim\limits_{\substack{\longrightarrow \\ a}}\lim\limits_{\substack{\longleftarrow \\b}}HF^*_{(a, b)}(L, E_{\gamma})$ is zero.  In particular, it suffices to show that each map $SH_*^{(a, b)}(V; \Lambda)\longrightarrow HF^*_{(a, b)}(L, E_{\gamma})$ is zero.

Let ${\bf X}\in SH_*^{(a, b)}(V; \Lambda)$.  A representative cochain of ${\bf X}$ is of the form
\[
X:=\{\zeta_i\} + \{\eta_i\}{\bm \theta}\in\prod_{i=-1}^{-\infty} CF^*_{V, (a, b)}(H^{\tau_i}; \Gamma)[{\bm \theta}].
\]
We will show that $\left(\iota_{(a, b)}\circ \mathfrak{i} \right)(X) = 0$.

Define a map $\iota^{\tau_i}_{(a, b)}:CF^*_{(a, b)}(H^{\tau_i}; \Gamma)\longrightarrow CF^*_{(a, b)}(L, E_{\gamma})$ by extending the construction of the maps defined in Section \ref{non-vanish} to the Floer cochain complexes of negatively-indexed Hamiltonians.  We choose Hamiltonians $H_s^{\tau_i}$ to define each $\iota^{\tau_i}_{(a, b)}$ so that $X_{H_s^{\tau_i}}$ agrees with $X_{H_s^{\tau_0}}$ on $W_+\cup(\epsilon_{i+1}, R)\times\dd_-W$ for all $s$.  In particular, $H_s^{\tau_i}(x)$ is constant when both $x$ is close to $L$ and $s$ is close to $0$, and $|H_s^{\tau_i}(q) - H^{\tau_i}(q)|$ is small for all $s$ and $q\in L$. 

By definition,
\begin{equation}
\left(\iota_{(a, b)}\circ \mathfrak{i} \right)^*([X]) = (\iota_{(a, b)}\circ c^{-1})^*([\eta_{-1}]) = (\iota^{\tau_0}_{(a, b)}\circ c^{-1})^*([\eta_{-1}]) = (\iota_{(a, b)}^{\tau_{-1}})^*([\eta_{-1}]),
\label{eq:eta1-neq0}
\end{equation}
where the last equality follows from the equality
\begin{equation}
(\iota^{\tau_i}_{(a, b)}\circ c^{i-1})^* = (\iota^{\tau_{i-1}}_{(a, b)})^*
\label{eq:iota-cont}
\end{equation}
derived in the proof of Lemma \ref{lem:descent}.

The first term in the equation $0 = \delta^*(X)$ is
\[
0 = \left\{\dd^{fl}(\zeta_i) + c^{i-1}(\eta_{i-1}) - \eta_i)\right\}_{i = -1}^{-\infty}
\]
Composing with $\iota^{\tau_i}_{(a, b)}$ yields the componentwise equalities
\[
\iota^{\tau_i}_{(a, b)}(\eta_i) = \iota^{\tau_i}_{(a, b)}\circ\dd^{fl}(\zeta_i) + \iota^{\tau_i}_{(a, b)}\circ c^{i-1}(\eta_{i-1}).
\]
The proof of Lemma \ref{lemma:discs} shows that $(\iota^{\tau_i}_{(a, b)}\circ\dd^{fl})^* = 0$, and so
\begin{equation}
(\iota^{\tau_i}_{(a, b)})^*([\eta_i]) = (\iota^{\tau_i}_{(a, b)}\circ c^{i-1})^*([\eta_{i-1}]).
\label{eq:etai-neq0}
\end{equation}
for any $i$.  If $(\iota^{\tau_0}_{(a, b)}\circ c^{-1})^*([\eta_{-1}])\neq 0$, then by equations (\ref{eq:eta1-neq0}), (\ref{eq:iota-cont}), and (\ref{eq:etai-neq0}), 
\begin{equation}
(\iota^{\tau_{i}}_{(a, b)})^*([\eta_{i}])=(\iota^{\tau_{-1}}_{(a, b)})^*([\eta_{-1}])\neq 0.
\label{eq:eta-contradiction}
\end{equation}

Recall that action is decreased by Floer trajectories.  Thus, if $w(s, t)$ is a Floer solution of $X_{H^{\tau_i}}$ with positive asymptotic limit $x\in\cg{P}(H^{\tau_i})$, then for any $s\in\RR$
\begin{equation}
\label{action-est}
\cg{A}_{H^{\tau_i}}(x) \leq \omega(-\tilde{x}\#w\big|_{(s, \infty)\times S^1}) + \int_0^1H^{\tau_i}(w(s, t))dt.
\end{equation}
By assumption, the action $\cg{A}$ of every non-zero summand of $\iota^{\tau_i}(\eta_i)$ is in $(a, b)$.  As $\lim\limits_{i\rightarrow\infty}\tau_i = -\infty$, we may choose $i$ so that 
\[
\sup\limits_{q\in L}H_0^{\tau_i}(q) < a - b.
\]  

Write $\eta_i = \sum_{\ell = 1}^{m_i} \eta_i^{\ell}$, where $\eta_i^{\ell} = a_{\ell}T^{\alpha_{\ell}}\mathfrak{y}_i^{\ell}$ for some $\mathfrak{y}_i^{\ell}\in o_x; x\in\cg{P}(H^{\tau_i})$, $\alpha_{\ell}\in\RR$, and $a_{\ell}\in\KK^*$.  As the $J$-holomorphic discs in a pearly trajectory have non-negative area, and thus contribute non-negatively to action, (\ref{action-est}) implies that at least one of the $\mathfrak{y}_i^{\ell}$ satisfies
\[
\cg{A}_{H^{\tau_i}}(T^{\alpha_{\ell}}\mathfrak{y}_i^{\ell}) < b + \sup\limits_{q\in L}H_0^{\tau_i}(q) < b + a - b = a.
\]
This contradicts the assumption that $T^{\alpha_{\ell}}\mathfrak{y}_i^{\ell} \in CF^*_{(a, b)}(H^{\tau_i}; \Gamma)$.  It follows that 
\[
(\iota^{\tau_0}_{(a, b)}\circ c^{-1})^*([\eta_{-1}]) = (\iota^{\tau_i}_{(a, b)})^*([\eta_i]) = 0.
\]

\end{proof}

\begin{example}
The total space of the line bundle $\cg{O}(-k)\longrightarrow \CC P^m$ is monotone whenever $1 \leq k \leq m$, and contains a Lagrangian torus in the radius-$\frac{1}{\sqrt{\pi(1+m-k)}}$ sphere bundle that satisfies the conditions of Theorem \ref{thm:nonvanish} \cite{ritter-s}.  It follows that $\widehat{SH^*}(W; \Lambda)\neq 0$ for any Liouville cobordism $W$ containing the sphere bundle of radius $\frac{1}{\sqrt{\pi(1+m-k)}}$.
\end{example}

\section{Computations for some annulus bundles over $\CC P^1$}
\label{example}

The tautological bundle over $\CC P^1$ contains a Lagrangian torus in the radius--$\frac{1}{\sqrt{\pi}}$ sphere bundle that satisfies all of the conditions of Theorem \ref{thm:nonvanish} (see Figure \ref{fig:polytope}) \cite{smith}.  If $W$ is a Liouville cobordism between two sphere bundles, then Theorem \ref{thm:nonvanish} guarantees that $\widehat{SH^*}(W; \Lambda) \neq 0$ if $W$ contains the radius--$\frac{1}{\sqrt{\pi}}$ sphere bundle.  It transpires that the converse is true.

\examplee*

Let $M$ be a Liouville cobordism with empty negative boundary (i.e. a Liouville domain).  Cieliebak-Frauenfelder-Oancea showed in \cite{cieliebak-f-o} that the uncompleted symplectic cohomology of a trivial cobordism $W \subset \widetilde{M}$ containing $\dd M$ is isomorphic to the uncompleted Rabinowitz Floer homology of $\dd M$.  We expect a relationship in this flavor between $\widehat{RFH^*}(\Sigma; \Lambda)$ and the Rabinowitz Floer homology of a contact hypersurface $\Sigma$ in a negative line bundle $E$ studied by Albers-Kang \cite{albers-k}.  In particular, Albers-Kang showed that the Rabinowitz Floer homology of a sphere bundle in $E$ of radius less than $\frac{1}{\sqrt{\pi}}$ vanishes.  While they claim that this vanishing result extends to the sphere bundle of radius $\frac{1}{\sqrt{\pi}}$, Theorem \ref{prop:example} implies otherwise.

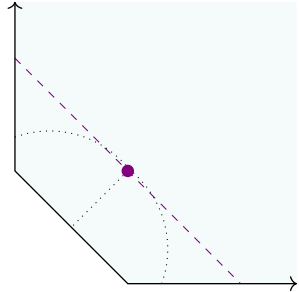
\begin{figure}[htbp!]
\center
\begin{tikzpicture}
\begin{tikzpicture}[scale=1.5]

    \draw [color=white, fill=teal!4] (1,0) -- (2.5,0) -- (2.5,2.5) -- (0,2.5) -- (0,1) -- cycle;

    \coordinate  (A) at (1,0);
    \coordinate (B) at (2.5,0);
    \coordinate (C) at (0,2.5);
    \coordinate (D) at (0,1);
    
    \coordinate (E) at (1, 1);
    \coordinate (F) at (.5, .5);
    \draw[dotted] (E) -- (F);
    \draw[dotted] (1.3, 0) arc (-20:55:.9);
    \draw[dotted] (0, 1.3) arc (110:35:.9);
    \draw (1, 1) node[circle, fill=violet, scale=.5] {};
    
    \draw[dashed, violet] (0, 2) -- (2, 0);

    \draw [<->, line width=.5pt] (B) -- (A) -- (D) -- (C);
  \end{tikzpicture}
\end{tikzpicture}
\caption{The moment polytope of $E$.  The projection of the Floer-essential Lagrangian $L$ is the purple dot, the projection of the families of Maslov-2 discs with boundary on $L$ are roughly the dashed black lines, and the contact hypersurface corresponding to the sphere bundle of radius $\frac{1}{\sqrt{\pi}}$ is the dashed purple line.}
\label{fig:polytope}
\end{figure}

\subsection{Setting up Floer theory}
\label{set-up-floer}
For simplicity, we set $\KK = \ZZ/2\ZZ$ in this section.  Thus, each orientation line $o_x$ appearing as a generator of a Floer complex is replaced by the corresponding critical point $x$.

$E$ is the complex line bundle 
\[
\begin{tikzcd}
\CC \arrow{r} & E \arrow{d}{\rho} \\
& \CC P^1
\end{tikzcd}
\]
with Chern class $c_1(E) = -[\omega_{FS}]$, where $\omega_{FS}$ is the Fubini-Studi form rescaled to give $\CC P^1$ an area of one.  The unit sphere bundle $SE$ has a contact form $\alpha$ satisfying $d\alpha = \rho^*\omega_{FS}$.  From the data $(SE, \alpha)$, $(\CC P^1, \omega_{FS})$, and the natural radial coordinate $r$ on the complex fibers, we equip $E\setminus\{r=0\}$ with the symplectic form $\Omega = \rho^*\omega_{FS} + d(\pi r^2\alpha)$ and extend to the zero section by $\Omega\big|_{\{r=0\}} = \rho^*\omega_{FS}\big|_{\CC P^1}$ (see \cite{geiges}, for example).

Away from the zero-section, $\Omega = d((1 + \pi r^2)\alpha)$.  The Reeb orbits of the contact form $(1+ \pi r^2)\alpha\big|_{r=1}$ traverse the fibers of the subbundle $SE\xlongrightarrow{\rho}\CC P^1$, and the simple orbits have period $1 + \pi$, \cite{ritter1}.

Fix a radius $R\in(0, \infty)$ and constant $C > 0$.  Let $\{H_n: E \longrightarrow\RR\}_{n\in\NN}$ be a family of Hamiltonians defined as follows.  We assume that each $H_n$ is everywhere of the form $h^n(\pi r^2)$ for a function $h^n:\RR\longrightarrow\RR$ that is 
\begin{enumerate}
\item convex and monotone increasing on $\RR_{\geq 0}$, 
\item bounded in absolute value by $C$ on $[0, \pi R^2]$, 
\item of slope $n(1 + \pi) + 1$ on $(\pi R_n^2, \infty)$, for some $R_n < R$,
\item and equal to $h^{n-1}$ on $[0, \pi R_{n-1}^2]$.  
\end{enumerate}
Further assume that the sequence $\{R_n\}$ tends to $R$ as $n$ tends to $\infty$ (see Figure \ref{fig:unbdd}).

\begin{figure}[htpb!]
\centering
\begin{tikzpicture}[scale=5]
\draw (0, -1.03660231) -- (0, -.07) node [left = 1pt] {$\RR$};
\draw (0, -1.03660231) -- (1, -1.03660231)node [right = 1pt] {$\pi r^2$};
\draw[violet] (0, -1.03660231) -- (.267948, -1);
\draw[->, violet]  (.267948, -1) -- (1, -.9)node [right=1pt] {$h^0$};
\draw[->, violet] (.267948, -1) -- (1, -.57735)node [right=1pt] {$h^1$};
\draw[->, violet]  (.641751, -.784185) -- (1, -.37797)node [right=1pt] {$h^2$};
\draw[->, violet] (.822168, -.5796118) -- (1, -.2582)node [right=1pt] {$h^3$};
\draw[->, violet] (.911356, -.418414) -- (1, -.1796)node [right=1pt] {.};
\draw[->, violet] (.955751, -.2988102) -- (1, -.12599) node [right=1pt] {.};
\draw[->, violet] (.977884, -.212366) -- (1, -.08873) node [right=1pt] {.};
\end{tikzpicture}
\caption{The family of Hamiltonians $\{h^n(\pi r^2)\}$}
\label{fig:unbdd}
\end{figure}
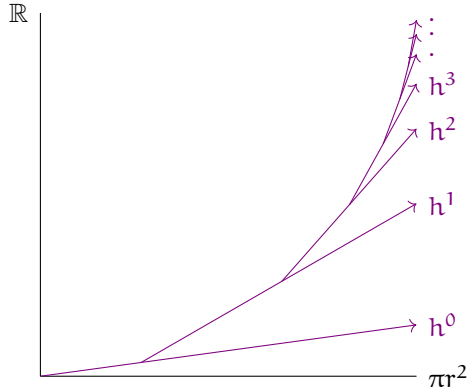

Choose $h^0$ so that $(h^0)'(0) = 0$.  This, together with conditions (1), (3), and (4) imply that the solutions of the differential equation
\begin{equation}
\dot{x}(t) = X_{H_n}(x(t))
\label{eq:pd}
\end{equation}
are the $\CC P^1$ family of constant orbits on the zero section and the $S^3$-worth of Reeb orbits for each period $k(1+\pi)$, $1\leq k\leq n$.  Condition (2) keeps the action of a critical point close to its symplectic area  and condition (4) also yields easier analysis of continuation maps.

To be able to set up Floer theory we need to either perturb the Hamiltonian or use Morse-Bott methods.  We apply a Morse perturbation directly to $\CC P^1$ to rid ourselves of the ``horizontally-degenerate'' critical points and then use Morse-Bott methods on the remaining $S^1$-degenerate families.

Let $F:\CC P^1\longrightarrow\RR$ be a $\cg{C}^2$-small Morse function with two critical points: a maximum value at $N$ (north) and a minimum value at $S$ (south).  The function $F$ has a Hamiltonian vector field $X_F$ defined through $\omega_{FS}$.  Consider solutions to the equations
\begin{equation}\left\{\begin{array}{c}
\frac{d}{dt}x(t) = X_{H_n}(x(t)) \\
\frac{d}{dt}(\rho\circ x)(t) = X_F(x(t))
\end{array}\right.\label{eq:pdpert}\end{equation}
Solutions of (\ref{eq:pdpert}) are precisely the solutions of (\ref{eq:pd}) with projection to $\CC P^1$ equal to $N$ or to $S$.

Each non-constant solution of (\ref{eq:pdpert}) occurs in an $S^1$-family.  Apply Morse-Bott methods to fix a maximum $M$ and a minimum $m$ of each $S^1$ family.  The soon-to-be-discussed differential will count solutions of a Floer equation using cascades.
Define $CF^*(H_n;\Gamma)$ to be the chain complex over $\Gamma$ generated by the maximum $M$ and minimum $m$ of each family of solutions of (\ref{eq:pdpert}). 

The generators of $CF^*(H_n;\Gamma)$ occur in six flavors: there are 4 Reeb orbits of period $k(1+\pi)$ for each $k\in\{1, ..., n\}$ that correspond to a unique choice of $S$ or $N$ and $m$ or $M$.  There are also the minimum and maximum constant orbits $N$ and $S$ themselves.  Let us fix notation.  $x^0_-$ is the constant orbit mapping to $N$ and $y^0_-$ is the constant orbit mapping to $S$.  $x^k_+$ is the maximum $M$ of the family of period-$k(1+\pi)$ orbits lying above $N$ and $x^k_-$ is the minimum $m$.  $y^k_+$ is the maximum $M$ of the family of period-$k(1+\pi)$ orbits lying above $S$ and $y^k_-$ is the minimum $m$.  

An element
\[
T^sz; z\in\{x^k_{\pm}, y^k_{\pm}\}, s\in\RR
\]
of $CF^*(H_n; \Gamma)$ is $\RR$-graded by
\begin{equation}
\label{eq:cz}
\mu(T^sz) = -2k + 2s + \mu_F(\rho\circ z) \pm \frac{1}{2},
\end{equation}
where $\mu_F(\rho\circ z)$ is the Morse index of the critical point of $F$ corresponding to $\rho\circ z.$  Note that these are cohomological gradings.

Lift a period-$k$ Reeb orbit $z$ to the $k$-fold fiber disc $\tilde{z}$.  Choose a generic, cylindrical almost-complex structure $J$.  The differential $\dd^{fl}$ counts rigid solutions of
\begin{equation}
\frac{\dd u}{\dd s} + J\left(\frac{\dd u}{\dd t} - X_{H_n} - \rho^*X_F\right) = 0,
\label{eq:floer2}
\end{equation}
so that if $u$ is a rigid solution of (\ref{eq:floer2}) with positive limit $y$ and negative limit $x$, $u$ contributes a term $T^{\omega(-\tilde{y}\#w\#\tilde{x})}x$ to $\dd^{fl}(y)$.

The one-form $\alpha$ on $E$ induces a splitting $TE\cong V\oplus H$ into vertical and horizontal components, where, for $e\in E$, $V_e\cong\CC$ and $H_e\cong T_{\rho(e)}\CC P^1$.  Let $(i_t)_{t\in S^1}$ be an $S^1$-family of almost-complex structures on $\CC$, each compatible with the standard symplectic structure, and each agreeing with the standard complex structure outside of a small neighborhood of zero.  Let $(j_t)_{t\in S^1}$ be an $S^1$-family of almost-complex structures on $\CC P^1$, each compatible with $\omega_{FS}$.  Finally, denoting the space of linear maps $H\longrightarrow V$ by $L(H, V)$, let $(B_t)_{t\in S^1}\in\Gamma(E\times S^1, L(H, V))$ satisfy $i_tB_t + B_tj_t = 0$ for each $t\in S^1$.  Further assume that the support of each $B_t$ lies close to the zero section and outside of a neighborhood of the fibers above $N$ and $S$. 

We restrict to the space $\cg{J}$ of $\Omega$-tame almost-complex structures of the form
\[
J = (J_t)_{t\in S^1} = \left(\left[\begin{array}{cc}
i_t & B_t \\
0 & j_t\end{array}\right]\right)_{t\in S^1}
\]
with respect to the splitting $TE\cong V\oplus H$.  We call $J$ {\it standard} at $e\in E$ if, for each $J_t$,
\[
(J_t)_e = \left[\begin{array}{cc} i & 0 \\ 0 & j_t\end{array}\right].
\]

\begin{lemma}[Albers-Kang, \cite{albers-k}]
The set $\cg{J}_{reg}$ of almost-complex structures $J\in\cg{J}$ such that
\begin{enumerate}
\item finite energy solutions of (\ref{eq:floer2}) are regular,
\item finite energy solutions $u:\RR\times S^1\longrightarrow\CC P^1$ of $\dd_s u + j_t(\dd_t u - X_F) = 0$ are regular, and
\item $S^1$-families of simple $J_t$-holomorphic spheres are regular
\end{enumerate}
is of Baire second category.
\end{lemma}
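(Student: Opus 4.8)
The plan is to run the usual Sard--Smale universal-moduli-space argument, following Albers--Kang \cite{albers-k}, taking advantage of the block-upper-triangular shape of the elements of $\cg{J}$ so that the analysis on the base $\CC P^1$ and in the fibers can be carried out separately. Since $\cg{J}$ is not itself a Banach manifold, I would first replace it, in Floer's standard way, by the separable Banach manifold of $C^{\varepsilon}$-perturbations of a fixed reference structure $J_0\in\cg{J}$ (perturbing each of the blocks $i_t$, $j_t$, $B_t$ within its allowed class), establish the three transversality statements there, obtain three residual subsets by applying Sard--Smale to the projections of the corresponding universal moduli spaces — each stratified into countably many pieces by action, Fredholm index and homotopy class — and finally intersect them and pass back to the $C^{\infty}$ category. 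Because each of the three conditions constrains only part of the block data, the three residual sets can be intersected freely.

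For condition (2) the equation $\dd_s u + j_t(\dd_t u - X_F)=0$ is the Floer equation on $(\CC P^1,\omega_{FS})$ for the $C^2$-small Morse Hamiltonian $F$, whose only one-periodic orbits are the non-degenerate critical points $N$ and $S$; after a generic time-dependent perturbation of $j$ all finite-energy solutions are honest Floer cylinders with asymptotics in $\{N,S\}$, and since $\CC P^1$ is a complex curve every non-constant such solution is somewhere injective, so the universal moduli space over the $j_t$-block is cut out transversally and Sard--Smale produces a residual set $\cg{J}^{(2)}$. For condition (3), a $J_t$-holomorphic sphere in $E$ projects under $\rho$ to a $j_t$-holomorphic sphere, so one forms the parametrized universal moduli space of simple spheres over $S^1\times\cg{J}$ and runs the standard somewhere-injectivity argument, perturbing $(i_t,B_t,j_t)$ on the $S^1$-invariant open locus where the $B_t$-block is unconstrained; Sard--Smale then yields a residual set $\cg{J}^{(3)}$.

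Condition (1) is the crux. The key structural remark is that if $u$ solves (\ref{eq:floer2}) then $\bar u:=\rho\circ u$ solves the base equation of condition (2), since $d\rho$ kills $X_{H_n}$ and the $B_t$-block and sends the horizontal lift $\rho^*X_F$ to $X_F$; hence for $j\in\cg{J}^{(2)}$ the base component is automatically regular. With respect to $TE\cong V\oplus H$ the linearized operator $D_u$ is block-upper-triangular with vanishing lower-left corner (the base equation does not see the vertical direction and $j_t$ does not see the fibers), so $\coker D_u\cong\coker D^{VV}_u$, where $D^{VV}_u$ is the vertical Cauchy--Riemann-type operator on $u^*V$; it therefore suffices to make $D^{VV}_u$ surjective by varying the vertical almost-complex structure $i_t$. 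Here one splits into two cases. If $\bar u$ is non-constant, then by unique continuation $u$ spends an open set of parameters near the zero section — precisely the fiber region where $i_t$ may be perturbed — and there $u$ has an injective point, so the usual argument (perturb $i_t$ in a small ball in that fiber region around the value of $u$ at such a point) makes the universal moduli space a Banach manifold. If instead $\bar u$ is constant, equal to $N$ or $S$, then $u$ lies in a single fiber and, by the choice $(h^0)'(0)=0$ and the shape of $h^n$, is (up to the cascade decomposition) a multiple cover of the fiber disc or of a Reeb cylinder; for these one computes $D^{VV}_u$ directly and invokes automatic transversality for covers of the fiber in a negative line bundle over $\CC P^1$. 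Intersecting with $\cg{J}^{(2)}$ and applying Sard--Smale gives a residual $\cg{J}^{(1)}$, and $\cg{J}_{reg}=\cg{J}^{(1)}\cap\cg{J}^{(2)}\cap\cg{J}^{(3)}$ is of second category. I expect the delicate point to be exactly this last case-split together with the constrained nature of the $i_t$-perturbations — they act on all fibers simultaneously and only near the zero section — which is why the explicit normal form of the Hamiltonians near $r=0$ and near the fibers over $N$ and $S$ is built into the setup; the non-constant case is routine, but the in-fiber case requires the automatic-transversality input to be pinned down with care.
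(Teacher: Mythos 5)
The paper does not prove this lemma: it is stated as a cited result of Albers--Kang (\cite{albers-k}), so there is no in-paper argument to measure your sketch against. Read on its own, your outline follows the expected strategy --- Banach-manifold completion of $\cg{J}$, Sard--Smale on universal moduli spaces stratified by index and homotopy class, and use of the block-triangular form of $J$ relative to $TE\cong V\oplus H$ to reduce surjectivity of $D_u$ to surjectivity of the vertical block $D^{VV}_u$ once $D^{HH}_u$ is known surjective from condition (2). (Strictly, surjectivity of $D^{HH}_u$ exhibits $\coker D_u$ as a quotient of $\coker D^{VV}_u$ rather than an isomorphism, but that direction is all you need.)

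There is, however, a genuine gap in your treatment of condition (1). You assert that when $\bar u=\rho\circ u$ is non-constant, unique continuation forces $u$ to spend an open set of the domain near the zero section, i.e.\ in the region where $i_t$ and $B_t$ may be perturbed. This does not follow: unique continuation for $\bar u$ produces an injective point of $\bar u$ in $\CC P^1$, but says nothing about the radial coordinate of $u$. A Floer cylinder between two Reeb-type orbits can lie entirely in an annulus bundle of strictly positive radius --- indeed the winding/maximum-principle Lemma 7 forces exactly this for some of the configurations in Figure 6 --- which is outside the support of the allowed $(i_t,B_t)$-perturbations, even though $\bar u$ is non-constant. An injective point of $u$ outside the perturbation region is useless for Sard--Smale, so the non-constant case is not ``routine'' as claimed. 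Similarly, in the fiber-confined case the relevant cylinders see only the \emph{fixed} standard $i$ (away from $r=0$), so there is no perturbation freedom at all; ``automatic transversality for covers of the fiber in a negative line bundle'' is not the right statement --- these are Floer cylinders in $\CC$ for a radial Hamiltonian, not holomorphic covers, and what is actually needed is a direct verification that the un-perturbable linearized operator in $\CC$ is already surjective. Your sketch gestures at both difficulties but, as written, resolves neither.
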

\begin{remark}
Fix $J = (J_t)_{t\in S^1}\in\cg{J}_{reg}$.  We must check that $J_t$-sphere bubbles do not contribute to limit points of the relevant moduli spaces.  We assume that each $J_t$ is standard on the annulus bundle containing all non-constant periodic orbits of the family $\{H_n\}_{n\in\NN}$.  The maximum principle ensures that the non-constant periodic orbits do not intersect the moduli space of $J_t$-holomorphic spheres, for any $t\in S^1$.  As $E$ is monotone, standard energy and index arguments show that the bubbling off of $J_t$-holomorphic spheres of Chern number not equal to one does not occur.  So consider the moduli space $\cg{M}(J; 1)$ of $S^1$-families of $J_t$-holomorphic spheres of Chern number equal to one.  The elements of $\cg{M}(J; 1)$ form a codimension-one subset of $E$.  If an element of $\cg{M}(J; 1)$ appears as a limit point of a sequence of Floer solutions of virtual dimension 1, then this gives rise to a Floer solution of virtual dimension $-1$, which is therefore a one-periodic orbit of some Hamiltonian $H$.  The sphere bubble must intersect this one-periodic orbit, and so this one-periodic orbit is constant.  But the constant one-periodic orbits form a dimension-zero subset of $E$, and so, after a small, generic perturbation of $H$, do not intersect $\cg{M}(J; 1)$.  (See Chapter 3 in \cite{salamon} for a thorough discussion of bubbling in monotone manifolds.)
\end{remark}

\subsection{Computing the differential}
We use Lemmas \ref{lemma:a-k} -- \ref{lemma:ritter} to determine the differential of $\widehat{SC^*}(E; \Lambda)$.  A cartoon of the Floer complex is given in Figure \ref{fig:fullcomplex}.  The ``horizontal'' differentials correspond to Floer trajectories in the fiber above a critical point.  The ``diagonal'' differentials correspond to Floer trajectories whose projection onto $\CC P^1$ either covers all of $\CC P^1\setminus\{N, S\}$ (in the case of an arrow from $x$ to $y$) or is a Morse flow-line of $F$ (in the case of an arrow from $y$ to $x$).

\begin{figure}[htbp!]
\[
\begin{tikzcd}[column sep=1cm]
x^0_- & x^1_+ \arrow{l} \arrow{dl} & Tx^1_- & \arrow{l} \arrow{dl} \dots & \dots & T^{n-1}x_+^n \arrow{dl} \arrow{l} & T^nx_-^n \\
Ty^0_- & Ty^1_+ \arrow{l} \arrow{ur} & T^2y^1_- & \arrow{l} \dots & \dots & T^ny_+^n \arrow{l} \arrow{ur} & T^{n+1}y_-^n
\end{tikzcd}
\]
\caption{The trajectories contributing to $CF^*(H_n; \Gamma)$}
\label{fig:fullcomplex}
\end{figure}
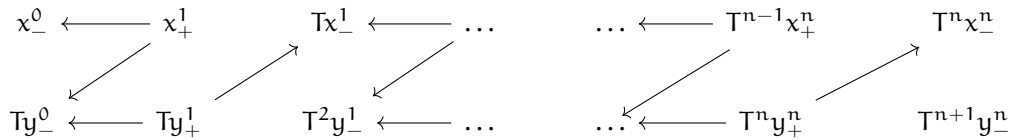

\begin{lemma}[Albers-Kang, \cite{albers-k}]
Any trajectory of $X_{H_n}+\rho^*X_F$ with vanishing symplectic area and with both asymptotic limits contained in the same fiber remains wholly in that fiber.  Thus, any such trajectory is identified with a trajectory of $X_{h^n(\pi r^2)}$ in $\CC$.
\label{lemma:a-k}
\end{lemma}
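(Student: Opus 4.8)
The plan is to project the trajectory to $\CC P^1$ along $\rho$, note that the projection is a Floer cylinder for the Morse function $F$ with both asymptotes at a single critical point, and then use the splitting $\Omega = \rho^*\omega_{FS} + d(\pi r^2\alpha)$ to identify the symplectic area of the trajectory with the energy of its projection. Vanishing symplectic area will then force the projection to be constant, confining the trajectory to a fiber.

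Let $u\colon\RR\times S^1\to E$ solve (\ref{eq:floer2}) and put $v := \rho\circ u$. The first step is to observe that $v$ solves the Floer equation $\dd_s v + j_t(\dd_t v - X_F) = 0$ on $(\CC P^1,\omega_{FS})$. This follows formally from three facts: each admissible $J_t$ is block upper-triangular for the splitting $TE\cong V\oplus H$ with vanishing lower-left block, so $d\rho\circ J = j\circ d\rho$; the field $X_{H_n}$ is vertical, being the Hamiltonian vector field of a function of $r$ alone, so $d\rho\,X_{H_n}=0$; and $\rho^*X_F$ is by construction the horizontal lift of $X_F$, so $d\rho(\rho^*X_F)=X_F$. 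Applying $d\rho$ to (\ref{eq:floer2}) gives the claim.

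Every orbit generating $CF^*(H_n;\Gamma)$ lies in the fiber over $N$ or over $S$ --- the constant orbits trivially, the Reeb orbits because they traverse the fiber circles --- so the hypothesis that both asymptotic limits of $u$ lie in a single fiber $E_p$ says exactly that $v$ converges at both ends to the constant orbit at a critical point $p\in\{N,S\}$ of $F$. The standard energy identity for the $F$-Floer equation then gives $E(v) = \int v^*\omega_{FS} \pm (F(p)-F(p)) = \int v^*\omega_{FS}$, with $E(v)\geq 0$ and equality iff $v$ is constant. On the other hand, since $\pi r^2\alpha$ is a smooth $1$-form on all of $E$ (it extends across the zero section) and the capping discs $\tilde x,\tilde y$ sit inside fibers --- so $\rho$ is constant on each of them --- the capped sphere $-\tilde y\#u\#\tilde x$ is closed, Stokes' theorem kills the exact summand $d(\pi r^2\alpha)$ of $\Omega$, and what survives is $\int_u\rho^*\omega_{FS} = \int v^*\omega_{FS}$. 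Thus
\[
0 \;=\; \omega\!\left(-\tilde y\#u\#\tilde x\right) \;=\; \int v^*\omega_{FS} \;=\; E(v),
\]
so $v\equiv p$ and $u(\RR\times S^1)\subseteq E_p$.

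It remains to recognize $u$ as a fiber trajectory. Along $E_p$ the horizontal lift $\rho^*X_F$ vanishes (because $X_F(p)=0$), each $J_t$ preserves the vertical subspace and restricts there to $i_t$, and $X_{H_n}$ restricts to the Hamiltonian vector field $X_{h^n(\pi r^2)}$ of the fiber; hence $u$ satisfies $\dd_s u + i_t(\dd_t u - X_{h^n(\pi r^2)}) = 0$ in $E_p\cong\CC$, which is what was claimed. (For the Morse--Bott cascades used to define $\dd^{fl}$, one applies this to each Floer cylinder in a cascade and notes that the intervening gradient segments run inside the $S^1$-families of orbits, which already lie in single fibers.) The one genuinely non-formal point is the area identity $\omega(-\tilde y\#u\#\tilde x) = \int v^*\omega_{FS}$: one needs $\pi r^2\alpha$ to be an honest smooth global primitive of $\Omega - \rho^*\omega_{FS}$ on $E$ --- in particular across the zero section, where $\alpha$ itself is undefined --- so that Stokes' theorem applies to the closed capped sphere, and one needs the caps to contribute nothing to $\int\rho^*\omega_{FS}$ because $\rho$ collapses them to points. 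Everything else is bookkeeping with the block form of $J$ and the usual Floer energy identity.
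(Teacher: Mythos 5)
The paper does not prove this lemma itself---it cites Albers--Kang---so there is no internal argument to compare against. Your proof is correct and is surely the intended one: project along $\rho$, use the block-triangular form of $J\in\cg{J}$ to see that $v=\rho\circ u$ solves Floer's equation for $F$ on $\CC P^1$ (this is exactly the content of the paper's Lemma~\ref{lemma:arnold}, also attributed to Albers--Kang), observe that the fiber hypothesis forces $v$ to be asymptotic at both ends to a single critical point $p\in\{N,S\}$ so that the Hamiltonian terms in the energy identity cancel, and identify $\omega(-\tilde y\#u\#\tilde x)$ with $\int v^*\omega_{FS}$ by applying Stokes' theorem to the exact summand $d(\pi r^2\alpha)$ over the closed capped sphere. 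You correctly isolate the points that actually need checking: that $\pi r^2\alpha$ extends smoothly across the zero section so Stokes is legitimate; that the caps are fiber discs and so contribute nothing to $\int\rho^*\omega_{FS}$; and that $B_t$ is supported away from the fibers over $N$ and $S$, so that $J\big|_{E_p}$ is block diagonal and the restricted equation is the Floer equation of $h^n(\pi r^2)$ on $\CC$. The parenthetical about cascades is the right way to reconcile the statement with the Morse--Bott differential actually used in the paper.

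One convention to make explicit: you read $\rho^*X_F$ as the horizontal lift, so that $d\rho(\rho^*X_F)=X_F$ on the nose. This is consistent with Lemma~\ref{lemma:arnold} and almost certainly what Albers--Kang mean. If instead one read $\rho^*X_F$ as the $\Omega$-Hamiltonian vector field of $F\circ\rho$, then---since $\Omega=(1+\pi r^2)\rho^*\omega_{FS}+2\pi r\,dr\wedge\alpha$ away from the zero section---one finds $d\rho(X_{F\circ\rho})=(1+\pi r^2)^{-1}X_F$, so the projected equation acquires an $(s,t)$-dependent conformal factor through $r\circ u$ and the clean identity $E(v)=\int v^*\omega_{FS}$ no longer drops out directly. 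It costs a sentence to state your convention and close this off, and it is worth doing since the two readings of $\rho^*X_F$ genuinely differ.
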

Utilizing grading considerations and the fact that $SH^*(\CC; \Lambda) = 0$ (see \cite{seidel}), the differential restricted to generators in the complex lines above $N$ and $S$ are the horizontal differentials shown in Figure \ref{fig:fullcomplex}.

\begin{lemma}
The winding number of the one-periodic orbits of $X_{H_n} +\rho^*X_F$ is decreased by the differential.  The only Floer trajectories with both asymptotes contained in the same Reeb orbit have image identically equal to this Reeb orbit.
\label{lemma:winding1}
\end{lemma}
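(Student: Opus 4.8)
\emph{Proof proposal.} The plan is to exploit the fibred structure of $E$ over $\CC P^1$ together with the fact that every one--periodic orbit of $X_{H_n} + \rho^*X_F$ lies in the fibre over $N$ or over $S$. Since the almost--complex structures in $\cg{J}$ are block upper--triangular with respect to the splitting $TE\cong V\oplus H$, the horizontal component of equation (\ref{eq:floer2}) shows that for any solution $u$ the base projection $\rho\circ u$ solves the $S^1$--dependent Floer equation $\dd_s(\rho\circ u) + j_t\big(\dd_t(\rho\circ u) - X_F\big) = 0$ on $\CC P^1$. In particular, as $F$ is $\cg{C}^2$--small, the usual energy estimate gives $\deg(\rho\circ u)\geq 0$, and each of $(\rho\circ u)^{-1}(N)$ and $(\rho\circ u)^{-1}(S)$ is either everything or a finite set of points of positive local multiplicity, by positivity of intersections applied near the critical points $N,S$ where $X_F$ vanishes.

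Next I would encode the winding number homologically. An orbit $z$ in the fibre over $p$ at radius $r_k$ has winding number $k$, and $k$ equals the intersection number of the $k$--fold fibre disc $\tilde z$ with the zero section $B=\CC P^1$. Capping $u$ off with the discs $\tilde z_{\pm}$ bounding its asymptotes produces a sphere $\Sigma\subset E$ with $[\Sigma]=\deg(\rho\circ u)\,[B]$ in $H_2(E;\ZZ)\cong\ZZ[B]$; evaluating $\Sigma\cdot B$ via this class and via the decomposition $\Sigma=\tilde z_-\#u\#(-\tilde z_+)$, and using $B\cdot B=-1$, yields
\[
\mathrm{wind}(z_+) - \mathrm{wind}(z_-) = u\cdot B + \deg(\rho\circ u).
\]
The right--hand side is non--negative: $\deg(\rho\circ u)\geq 0$ by the previous paragraph, and $u\cdot B\geq 0$ because $X_{H_n}$ vanishes along $B$ (since $(h^0)'(0)=0$ and $h^n\equiv h^0$ near the zero section) while $X_F$ is tangent to $B$, so in a tubular neighbourhood of $B$ equation (\ref{eq:floer2}) is a compact perturbation of a Cauchy--Riemann equation for which $B$ is an invariant almost--complex divisor; its intersection points with $u$ are then isolated with positive multiplicity. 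This proves that $\dd^{fl}$ does not raise the winding number.

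For the second statement, suppose both asymptotes $z_{\pm}$ of $u$ have the same image circle $S^1_{r_k}$ inside one fibre, say over $N$; then $\mathrm{wind}(z_+)=\mathrm{wind}(z_-)=k$, so the displayed identity forces $\deg(\rho\circ u)=0$ and $u\cdot B=0$. The first makes $\rho\circ u$ a degree--zero Floer cylinder from $N$ to $N$ for the $\cg{C}^2$--small $F$, hence constant, so $u$ maps into $\rho^{-1}(N)\cong\CC$, where $B_t\equiv 0$ and $u$ therefore solves the genuine Floer equation of the convex radial Hamiltonian $h^n(\pi r^2)$; the second says $u$ avoids the origin. Finally $E(u)=\cg{A}_{H_n}(z_-)-\cg{A}_{H_n}(z_+)$, and since $z_+$ and $z_-$ differ only by a point of the Morse--Bott manifold $S^1_{r_k}$ this difference can be made arbitrarily small, so a uniform lower bound on the energy of non--constant Floer cylinders between the Morse--Bott families — or, alternatively, the integrated maximum principle applied to the radial coordinate as in the proof of Lemma \ref{lem:subcomplex} — forces $E(u)=0$. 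Hence $u$ is independent of $s$ and its image is exactly the Reeb orbit.

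I expect the main difficulty to be the rigorous justification of $u\cdot B\geq 0$ in the presence of both the Hamiltonian term $X_{H_n}+\rho^*X_F$ and the off--diagonal term $B_t$. The clean route is the Carleman similarity principle applied fibrewise in a tubular neighbourhood of $B$: after trivialising the normal bundle and using that $X_{H_n}$ vanishes to second order along $B$ and $X_F$ is tangent to $B$, the vertical part of $u$ satisfies a perturbed $\bar\dd$--equation with only a zeroth--order term, so its zero set is isolated and each zero contributes positively to $u\cdot B$; the term $B_t$, being linear in the horizontal directions, feeds only into that zeroth--order term and does not affect the sign. The index bookkeeping on $\CC P^1$ giving $\deg(\rho\circ u)\geq 0$ and constancy of the degree--zero cylinders is routine once one identifies the small--Hamiltonian Floer complex of $\CC P^1$ with its Morse complex.
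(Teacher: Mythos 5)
Your route is genuinely different from the paper's (the paper never uses intersection theory: it parallel-transports the cylinder into a single fibre via the connection $\alpha$, integrates the radial part of the resulting equation over $S^1$, and uses convexity of $h^n$ together with the maximum principle, the constant-radius alternative plus exactness of $\Omega$ on sphere bundles giving the second statement). The homological identity $\mathrm{wind}(z_+)-\mathrm{wind}(z_-)=u\cdot B+\deg(\rho\circ u)$ is fine, and $\deg(\rho\circ u)\geq 0$ follows from Lemma \ref{lemma:arnold} and an energy estimate as you say. But the step $u\cdot B\geq 0$ — which you yourself flag as the main difficulty — does not go through for the almost-complex structures actually allowed here, and this is a genuine gap. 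You treat the zero section $B$ as a $J$-invariant divisor and claim the off-diagonal term $B_t$ only feeds into a zeroth-order term. However $B_t\in L(H,V)$ is required to be supported \emph{close to the zero section} (that is exactly where it is needed to regularize the holomorphic spheres), and nothing makes it vanish along $B$; since the horizontal subbundle coincides with $TB$ along $B$, the condition $J(TB)\subset TB$ fails wherever $B_t\big|_B\neq 0$, so $B$ is not an almost-complex divisor for generic $J\in\cg{J}_{reg}$. Concretely, writing $u=(u^v,u^h)$ and eliminating $\dd_tu^h-X_F$ via the horizontal part of (\ref{eq:floer2}), the vertical part becomes
\[
\dd_su^v+i_t\dd_tu^v \;=\; i_tX_{H_n}(u)\;-\;B_t\,j_t\,\dd_su^h .
\]
The first term on the right is $O(|u^v|)$ and can be absorbed as you intend, but the second is a genuine inhomogeneity that does not vanish on $u^{-1}(B)$, so the Carleman similarity principle gives neither isolated zeros of $u^v$ nor positive local intersection indices. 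Without $u\cdot B\geq 0$ your identity carries no sign information, and both halves of the lemma collapse (in particular, in the second part you can no longer conclude $\deg(\rho\circ u)=0$ and $u\cdot B=0$ from equality of winding numbers). A secondary gap: the constant orbits $x^0_-,y^0_-$ lie \emph{inside} $B$, so for trajectories asymptotic to them $u\cdot B$ is not a finite transverse count and the decomposition of $\Sigma\cdot B$ into cap and cylinder contributions needs an asymptotic winding analysis you do not supply; yet the first statement of the lemma concerns these orbits too.

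Your argument would become correct if you could additionally arrange $B_t\big|_B=0$ (or work with a $J$ that is standard near the zero section), but then you must revisit the regularity of the Chern-number-one spheres for which $B_t$ was introduced, so this is not a cosmetic fix within the stated setup. Two smaller points: in the final step the energy is exactly $\cg{A}_{H_n}(z_-)-\cg{A}_{H_n}(z_+)=0$ because the action is constant along the Morse--Bott circle, so no ``arbitrarily small difference'' limiting argument or uniform energy gap is needed; and note that the paper's radial/convexity argument is also invoked later for continuation maps, a use your intersection identity would cover only after the positivity issue above is repaired.
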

\begin{proof}
We adapt Lemma 2.3 in \cite{cieliebak-o} to this setting and use the notation of Section 5.1 in \cite{frauenfelder}.  For $p\in\CC P^1$ let $E_p = \rho^{-1}(p)$.  For a path $\gamma(t)\in\CC P^1$, $T'\in\RR$, and $v\in E_{\gamma(T')}$, let $P^T_{\gamma(t)}v$ be the parallel transport of $v$ along $\gamma$ to $E_{\gamma(T)}$ with respect to the fixed connection given by $\alpha$.

Let $u(s, t)$ be a Floer trajectory with negative asymptote a non-constant orbit $x(t)$.  Let $v(s, t) = \rho\circ u(s, t)$.  For fixed $\mathfrak{s}\in\RR$ denote by $v^{\mathfrak{s}}(t)$ the path $v(\mathfrak{s}, t)$, and for fixed $\mathfrak{t}\in S^1$ denote by $v^{\mathfrak{t}}(s)$ the path $v(s, \mathfrak{t})$.  Let $\dd_su^v$, respectively $\dd_tu^v$, be the vertical component of $\dd_su$, respectively $\dd_tu$, under the splitting $TE\cong V\oplus H$ determined by $\alpha$.

Let $\sigma'$ be the largest value in $\RR\cup\{\infty\}$ so that $J_{u(s, t)}$ is standard for all $s < \sigma'$ and all $t\in S^1$.  Note that if $u(s, t)$ stays at constant radius then $\sigma' = \infty$.  Choose $\sigma < \sigma'$ so that $u(\sigma, t)$ lies in a neighborhood of $u(\sigma', t)$ for each $t\in S^1$.  Define a map $u_0:\RR\times S^1\longrightarrow E_{u(\sigma, 0)} \cong\CC$ by $u_0(s, t) = P_{v^0(s)}^{\sigma}P_{v^s(t)}^0u(s, t)$.  If $F_{\alpha}$ is the curvature of $(E, \alpha)$ and $R_{\alpha}$ is the Reeb vector field of $\alpha$, then
\[
\dd_su_0(s, t) = -\int_0^t F_{\alpha}(\dd_sv, \dd_tv)dt\cdot R_{\alpha}(u_0) + P_{v^0(s)}^{\sigma}P_{v^s(t)}^0\dd_su^v(s, t),
\]
and
\[
\dd_tu_0 = P_{v^0(s)}^{\sigma}P_{v^s(t)}^0\dd_tu^v(s, t).
\]
As $u$ is a Floer trajectory and $R_{\alpha}$ is invariant under parallel transport, we deduce that $u_0\big|_{\RR_{\leq\sigma}\times S^1}$ satisfies a Floer equation 
\begin{equation}
\label{eq:floer-fiber}
\dd_su_0 + \int_0^t F_{\alpha}(\dd_sv, \dd_tv)dt\cdot R_{\alpha} + i\left(\dd_tu_0 - X_{H_n}\right).
\end{equation}
Write $u_0(s, t) = (a(s, t), f(s, t))$ in the coordinates $\RR\times S^1$ on $\CC^*$ induced by the standard Hermitian metric.  Integrating over the radial direction of (\ref{eq:floer-fiber}),
\begin{equation}
\int_{S^1}\dd_s a(s, t)dt = \int_{S^1}\alpha(\dd_tf(s, t))dt - \int_{S^1}(h^n)'(\pi a(s, t)^2)dt
\end{equation}
for fixed $s\in\RR_{\leq\sigma}$.  As $\int_{S^1}\alpha(\dd_tf(s, t))$ is constant and $h^n$ is convex, we deduce that either
\[
\int_{S^1}\dd_s a(s, t)dt > 0 \text{ for some } s \in(-\infty, \sigma)
\]
or 
\[
\int_{S^1}\dd_s a(s, t)dt = 0 \text{ for all } s\in(-\infty, \sigma).
\] 
In the former case, there exists $(\mathfrak{s}, \mathfrak{t})\in\RR\times S^1$ such that $a(\mathfrak{s}, \mathfrak{t}) > \lim\limits_{s\rightarrow-\infty}a(s, t)$.  As parallel transport preserves radius, $u(\mathfrak{s}, \mathfrak{t})$ leaves the disc bundle containing $x(\cdot)$.  The maximum principal implies that $\lim\limits_{s\rightarrow\infty}u(s, \cdot)$ lives at a larger radius than $x(\cdot)$.  The convexity of $h^n$ now proves the lemma.

In the latter case, $u_0(s, t)$ remains in the sphere bundle containing $x(t)$ for all $(s, t)\in(-\infty, \sigma)$.  Letting $\sigma\rightarrow\sigma'$, we can either argue as above, or $\sigma' = \infty$ and $u_0(s, t)$ stays at constant radius for all $s\in(-\infty, \infty)$.  This implies that $u(s, t)$ remains in a sphere bundle of constant radius.

If $\lim\limits_{s\rightarrow\pm\infty}u(s,\cdot) = x$, then the image of $u(s, t)$ is contained in a sphere bundle $S_R$ of some radius $R$.  By the exactness of $\Omega$ on $S_R$, $u$ must have energy $E(u) = 0$, and so $u(s, t)$ is constant.

\end{proof}

\begin{lemma}[Albers-Kang, \cite{albers-k}]
If $u$ is a Floer solution of $X_{H_n} + \rho^*X_F$ then $\rho\circ u$ is a Floer solution of $X_F$; in particular, the Conley-Zhender index of critical points of $X_F$ increases from the positive to the negative asymptote of the trajectory $\rho\circ u$.
\label{lemma:arnold}
\end{lemma}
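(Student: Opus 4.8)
The plan is to push the Floer equation (\ref{eq:floer2}) forward along the bundle projection $\rho$ and recognise the result as Floer's equation for $X_F$ on $\CC P^1$. The first ingredient is the compatibility $d\rho\circ J_t = j_t\circ d\rho$, valid for every $t\in S^1$: since $J_t = \left[\begin{array}{cc} i_t & B_t \\ 0 & j_t\end{array}\right]$ relative to the splitting $TE\cong V\oplus H$ and $B_t$ takes values in $L(H,V)$ with $V = \ker d\rho$, only the $j_t$-block survives after applying $d\rho$. The second ingredient is the pair of identities $d\rho(X_{H_n}) = 0$ and $d\rho(\rho^*X_F) = X_F$. The second is just the definition of the horizontal lift of $X_F$ determined by $\alpha$. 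For the first, $H_n = h^n(\pi r^2)$ depends only on the fibre radius, so on $E\setminus\{r=0\}$ its $\Omega$-Hamiltonian vector field is a radial function times the Reeb vector field of $\alpha$, which is tangent to the fibres; since $(h^0)'(0) = 0$ (and each $h^n$ agrees with $h^0$ near $0$) this vector field extends smoothly by zero across the zero section, and in all cases $d\rho(X_{H_n}) = 0$.

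Granting these, I would apply $d\rho$ to (\ref{eq:floer2}) and set $v := \rho\circ u$; the chain rule gives
\[
\frac{\dd v}{\dd s} + j_t\!\left(\frac{\dd v}{\dd t} - X_F\right) = 0,
\]
which is precisely Floer's equation on $(\CC P^1,\omega_{FS})$ for the Hamiltonian $F$ with the $S^1$-family $(j_t)_{t\in S^1}$. Since $u$ has finite energy, $u(s,\cdot)$ converges as $s\to\pm\infty$ to one-periodic orbits of $X_{H_n}+\rho^*X_F$, and hence $v(s,\cdot)$ converges to their projections; these are one-periodic orbits of $X_F$, so, $F$ being $\cg{C}^2$-small and autonomous, they are critical points of $F$, necessarily $N$ or $S$ — consistent with the construction of (\ref{eq:pdpert}). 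A Floer cylinder with such non-degenerate asymptotics automatically has finite energy, so $v$ is a genuine Floer trajectory of $X_F$. This proves the first assertion.

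For the final clause I would invoke the action/index monotonicity recorded in Subsection \ref{set-up}: the Floer differential strictly raises the action functional and, under the cohomological grading conventions fixed there, raises the grading, so the negative asymptote ($s\to-\infty$) of any nonconstant Floer trajectory carries strictly larger Conley-Zehnder index than the positive asymptote ($s\to+\infty$). Applied to the Floer complex of $(F,j_t)$ on $\CC P^1$ — where, $F$ being $\cg{C}^2$-small, the Conley-Zehnder index of the constant orbit at a critical point agrees, up to the global shift appearing in (\ref{eq:cz}), with its Morse index ($0$ at $S$, $2$ at $N$) — this is exactly the assertion that the Conley-Zehnder index of the critical point under $\rho\circ u$ increases from the positive to the negative asymptote. (Equivalently: for generic $(j_t)$ the trajectory $v$ is $C^0$-close to a negative gradient flow line of $F$, which runs from the higher critical point down to the lower one as $s$ increases.)

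The step I expect to cost the most care is the verticality of $X_{H_n}$: one must unpack $\Omega = \rho^*\omega_{FS} + d(\pi r^2\alpha)$ together with the connection and curvature attached to $\alpha$ in order to confirm that $X_{H_n}$ is a radial multiple of the Reeb field on all of $E\setminus\{r=0\}$ and extends smoothly by zero over the zero section, since this is what makes the pushed-forward equation well-defined on the whole cylinder $\RR\times S^1$. The remaining ingredients — the block form of $J$, the transfer of finiteness of energy, the asymptotic analysis, and the index monotonicity — are routine.
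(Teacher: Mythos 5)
The paper cites this lemma to Albers--Kang \cite{albers-k} and gives no internal proof, so there is nothing to compare against directly; I will assess your argument on its own.

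Your proof of the first assertion is correct and is the standard projection argument: the block-upper-triangular form of $J_t$ relative to $TE\cong V\oplus H$ gives $d\rho\circ J_t = j_t\circ d\rho$; $X_{H_n}$ is a radial multiple of the Reeb field $R_\alpha$ and hence vertical, extending smoothly by zero over the zero section because $(h^n)'(0)=0$; and $d\rho(\rho^*X_F)=X_F$ by definition of the horizontal lift. Applying $d\rho$ to (\ref{eq:floer2}) then gives Floer's equation for $(F,j_t)$ on $\CC P^1$, and the asymptotics project to critical points of $F$. All good.

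Your treatment of the ``in particular'' clause, however, has a genuine gap. You identify the Conley--Zehnder index of the constant orbit at a critical point with its Morse index ``up to the global shift appearing in (\ref{eq:cz}),'' and conclude that the Morse index increases from the positive to the negative asymptote — equivalently (your parenthetical), that for generic $(j_t)$ the projection $v$ is $C^0$-close to a negative gradient flow line of $F$. But the shift $-2k + 2s \pm \tfrac{1}{2}$ in (\ref{eq:cz}) is \emph{not} constant along a trajectory: it changes precisely because $v$ can sweep out a nontrivial class in $H_2(\CC P^1)$. The paper's own Figure \ref{fig:fullcomplex} exhibits this: the diagonal arrows from the $x$-row to the $y$-row (for instance $x^1_+\to Ty^0_-$) are differential contributions whose positive asymptote lies over $N$ (Morse index $2$) and whose negative asymptote lies over $S$ (Morse index $0$), so the Morse index \emph{decreases}; and, as the caption says, the projection ``covers all of $\CC P^1\setminus\{N,S\}$,'' i.e.\ it wraps, rather than running along a gradient flow line. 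The monotonicity that actually holds is for the full capped index (or equivalently the non-negativity of the Fredholm index of $v$), which is supplied by the regularity requirement on $(j_t)$ — condition (2) in the paper's preceding Albers--Kang lemma — not by the naive identification of Conley--Zehnder index with Morse index. As written, both your ``global shift'' step and the gradient-flow parenthetical are false for exactly the trajectories that the paper needs this lemma to control.
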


\begin{lemma}[Ritter, \cite{ritter1}]
\label{lemma:ritter}
$HF^*(H_n; \Lambda)\cong\bigslant{\Lambda[x]}{(x^2 + T)}.$  The induced continuation map on cohomology is 
\begin{align*}
HF^*(H_n; \Lambda)&\longrightarrow HF^*(H_{n+1}; \Lambda) \\
1 &\mapsto 1 \\
x &\mapsto T.
\end{align*}
\end{lemma}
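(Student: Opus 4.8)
Following Ritter \cite{ritter1}, the plan is to read the answer off the complex $(CF^*(H_n;\Lambda),\dd^{fl})$ once its differential has been determined from Lemmas~\ref{lemma:a-k}--\ref{lemma:arnold}. The ``horizontal'' part of $\dd^{fl}$---the piece supported in the fibres over $N$ and over $S$---has already been pinned down above, via Lemma~\ref{lemma:a-k} and the vanishing $SH^*(\CC;\Lambda)=0$; it remains to treat the trajectories that leave a fibre. By Lemma~\ref{lemma:winding1} any such trajectory does not increase the winding number, and by Lemma~\ref{lemma:arnold} its projection to $\CC P^1$ is a Floer trajectory of the $\cg{C}^2$-small $X_F$, hence---sphere bubbles being excluded on rigid curves by monotonicity of $E$---a negative gradient flowline of $F$, along which the Morse index of $F$ increases from the positive to the negative asymptote. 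Feeding these constraints into the grading formula (\ref{eq:cz}) leaves exactly the ``diagonal'' arrows of Figure~\ref{fig:fullcomplex}: one issuing from each $x^k_+$ (the winding drops by one and the base degree is one) and one issuing from each $y^k_+$ (the winding is preserved and the base component is a gradient flowline $S\to N$). The base-degree-one contribution is carried by the unique holomorphic line in $\CC P^1$, counted with multiplicity one and weighted by $T$ (its area); the other is a Morse count equal to one. Thus, in the normalisation of Figure~\ref{fig:fullcomplex}, $\dd^{fl}x^k_+=x^{k-1}_-+Ty^{k-1}_-$ and $\dd^{fl}y^k_+=x^k_-+y^{k-1}_-$ for $1\le k\le n$ (with $x^0_-$ and $y^0_-$ denoting the two constant orbits), while $\dd^{fl}$ annihilates every other generator.

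What remains is linear algebra. The $2n$ elements $\dd^{fl}x^k_+$ and $\dd^{fl}y^k_+$ ($1\le k\le n$) are $\Lambda$-linearly independent, so $HF^*(H_n;\Lambda)$ is free of rank two over $\Lambda$; chasing the relations $x^{k-1}_-\equiv Ty^{k-1}_-$ and $x^k_-\equiv y^{k-1}_-$ gives $[y^j_-]=T^{-j}[y^0_-]$ for $j\le n-1$ and $[x^j_-]=T^{1-j}[y^0_-]$, so that $[y^0_-]$ together with the normalised class $[T^{n+1}y^n_-]$---which is unconstrained, there being no $x^{n+1}_+$---generate. Setting $1:=[y^0_-]$ and $x:=[T^{n+1}y^n_-]$ and matching gradings via (\ref{eq:cz}) identifies $HF^*(H_n;\Lambda)$ with $\bigslant{\Lambda[x]}{(x^2+T)}$, the same module (and ring, via Ritter's TQFT structure) as $QH^*(\CC P^1;\Lambda)$. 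For the continuation map, on chains $c^n$ fixes the lowest-action constant orbit $y^0_-$ modulo higher-action terms, which gives $1\mapsto 1$; and it sends the period-$n(1+\pi)$ orbit $y^n_-$ of $H_n$ to the corresponding orbit of $H_{n+1}$ modulo higher-action terms, so that $c^n(x)$ equals $[T^{n+1}y^n_-]$ computed in $HF^*(H_{n+1};\Lambda)$. In that complex the relation $y^{n-1}_-\equiv Ty^n_-$ now holds, forcing $[y^n_-]=T^{-n}[y^0_-]$; hence $c^n(x)=T^{n+1}\cdot T^{-n}\cdot 1=T\cdot 1$, as claimed.

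I expect the main obstacle to be the enumerative input to the first paragraph: verifying that the structure constants of $\dd^{fl}$ are all $1$ over $\KK=\ZZ/2\ZZ$, that the $T$-weights are exactly those recorded in Figure~\ref{fig:fullcomplex}, and that no other trajectories contribute---which is exactly where the analysis of Ritter \cite{ritter1} and Albers--Kang \cite{albers-k} (Lemmas~\ref{lemma:a-k}--\ref{lemma:arnold} and the accompanying discussion of bubbling) is doing the work. A subsidiary point is to keep the capping-disc bookkeeping consistent across the complexes for $H_n$ and $H_{n+1}$, so that the $T$-powers entering the computation of $c^n$ come out as stated; once $\dd^{fl}$ is in hand, the computation of $HF^*(H_n;\Lambda)$ and of the continuation map is the elementary manipulation above.
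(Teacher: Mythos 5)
Note first that the paper does not prove this statement at all: it is imported wholesale from Ritter \cite{ritter1}, where it is established by identifying $HF^*(H_n;\Lambda)$ with the quantum cohomology of the total space and the continuation maps with quantum multiplication by the Seidel element of the fibrewise rotation, via Gromov--Witten invariants. Your proposal instead tries to prove it directly from the explicit complex, and its second half (the rank-two linear algebra, the relations $[x^{k-1}_-]=T[y^{k-1}_-]$, $[x^{k}_-]=[y^{k-1}_-]$, and the evaluation $c^n(x)=T\cdot 1$) is fine \emph{given} the differential of Figure \ref{fig:fullcomplex}. The genuine gap is exactly at that input. Lemmas \ref{lemma:a-k}--\ref{lemma:arnold} only constrain the differential (winding non-increasing, projection a Floer solution of $X_F$); they do not produce the counts. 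Your reduction ``hence a negative gradient flowline of $F$'' is not correct for the arrows out of $x^k_+$: a Floer solution of a $\cg{C}^2$-small Hamiltonian on $\CC P^1$ need not be a gradient line precisely because of solutions in a nontrivial homotopy class, and those are the ones you later need (``base degree one''). Moreover ``the unique holomorphic line in $\CC P^1$, counted with multiplicity one'' is not a count of the relevant rigid Floer cylinders in $E$: one must count lifts with prescribed winding numbers and asymptotics, check regularity, and compute the $T$-weights from the capped areas. You flag this as ``the main obstacle'' and defer it to Ritter and Albers--Kang --- but that enumeration \emph{is} the content of the lemma, so the proposal defers its own core.

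There is also a circularity relative to the paper's own logic: the paper derives the differential of Figure \ref{fig:fullcomplex} by combining Lemmas \ref{lemma:a-k}--\ref{lemma:arnold}, $(\dd^{fl})^2=0$, \emph{and} Lemma \ref{lemma:ritter} (``\dots and that yield Lemma \ref{lemma:ritter}\dots''), i.e.\ the known answer is used to pin down the arrows, not the other way around. So taking that figure's differential as the starting point of a proof of Lemma \ref{lemma:ritter} is circular unless you establish the counts independently, which is Ritter's theorem. Two smaller points in the same direction: the presentation $\bigslant{\Lambda[x]}{(x^2+T)}$ is a ring statement, while your chain-level argument only yields a free rank-two $\Lambda$-module --- the product is again imported (``Ritter's TQFT structure''); and the continuation-map computation needs $c^n$ to be the canonical inclusion on the nose (as the paper arranges by extending Lemma \ref{lemma:winding1} to monotone continuation data), not merely ``modulo higher-action terms'', since an unidentified correction term in the same degree could a priori change the class of $c^n(x)$.
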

Lemma \ref{lemma:winding1} also holds for continuation maps induced by $\RR$-families of Hamiltonians that are monotone-decreasing in $\RR$.  We may thus choose continuation maps that act as the canonical inclusions.  Allowing all trajectories of index one that satisfy Lemmas \ref{lemma:a-k}, \ref{lemma:winding1}, and  \ref{lemma:arnold}, that define a differential that squares to zero, and that yield Lemma \ref{lemma:ritter}, produces the complex shown in Figure \ref{fig:fullcomplex}.

Let $E_R\subset E$ be the disc bundle of radius $R$.  The completed symplectic cochain complex of $E_R$ in degree $k$ is
\[
\widehat{SC^k}(E_R; \Lambda) =\Lambda\left\la\left\{\sum_{i=0}^{\infty}a_iT^{s_i}z_i\hspace{.1cm}\bigg|\hspace{.1cm} a_i\in\KK; \exists\text{ $n$ s.t. }T^{s_i}z_i\in CF^{k}(H_n; \Gamma); \lim_{i\rightarrow\infty}\cg{A}(T^{s_i}z_i)=\infty\right\}\right\ra.
\]
Following Albers-Kang, we can rephrase the action in terms of symplectic area \cite{albers-k}.  The first observation is that, for a critical point $T^sz$, where $z = x^n_{\pm}$ or $y^n_{\pm}$, the index formula can be manipulated:
\[
k = -2n + 2s\omega(A) + \mu_F(\rho(z)) \pm \frac{1}{2}\iff
n = -\frac{1}{2}\left(k - 2s - \mu_F(\rho(z)) \mp \frac{1}{2}\right).
\]
Thus, the action of $T^sz$ can be reformulated as
\begin{align*}
\cg{A}_{H_n}(T^sz) &= s -\int_D \tilde{z}^*\Omega + \int_0^1 H_n(z(t)) + \rho^*F(z(t)) dt 
\\ &\cong s -\int_D (d^n)^*d(\pi R_n^2\alpha) + \int_0^1 H_n(z(t)) + \rho^* F(z(t)) dt\\
&= s -n\pi R_n^2 + \int_0^1 H_n(z(t)) + \rho^*F(z(t))dt \\
&= s + \frac{\pi R_n^2}{2}\left(k - 2s - \mu_F(\rho(z)) \mp\frac{1}{2}\right) + \int_0^1 H_n(z(t)) + \rho^* F(z(t))dt\\
&= (1 - \pi R_n^2)s + C(z),
\end{align*}
where $C(z) = \frac{\pi R_n^2}{2}\left(k - \mu_F(\rho(z)) \mp \frac{1}{2} \right)+ \int_0^1 H_n(z(t)) + \rho^* F(z(t))dt$ is uniformly bounded.

\begin{lemma}
The completed symplectic cohomology of a disc bundle of radius $R$ is 
\[
\widehat{SH^*}(E_R; \Lambda) \cong \left\{\begin{array}{cc} 0 & R < \frac{1}{\sqrt{\pi}} \\ \Lambda & R \geq \frac{1}{\sqrt{\pi}}\end{array}\right..
\]
\end{lemma}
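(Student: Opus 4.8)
The plan is to read everything off the explicit model already assembled in Section~\ref{set-up-floer}. Since the continuation maps were chosen to act as the canonical inclusions, the cone of $\{c^i-\mathrm{id}\}$ defining $\widehat{SC^*}(E_R;\Lambda)$ is, by Remark~\ref{rmk:novikov}, quasi-isomorphic to the $\cg{A}$-completion of the colimit complex $CF^*(H_\infty;\Gamma):=\varinjlim_n CF^*(H_n;\Gamma)=\bigcup_n CF^*(H_n;\Gamma)$, whose generators are the $T^s$-multiples of $x^k_\pm,y^k_\pm$ ($k\geq1$) and of $x^0_-,y^0_-$ and whose differential is the staircase of Figure~\ref{fig:fullcomplex}. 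Its uncompleted cohomology is $\varinjlim_n HF^*(H_n;\Lambda)$, which by Lemma~\ref{lemma:ritter} is the colimit of $\Lambda[x]/(x^2+T)$ along $x\mapsto T$, hence $\cong\Lambda$, with the unit represented by the cycle $x^0_-$. The whole content of the lemma is how the $\cg{A}$-completion modifies this $\Lambda$, and this is governed by the reformulated action $\cg{A}_{H_n}(T^sz)=(1-\pi R_n^2)s+C(z)$, with $C(z)$ uniformly bounded and $R_n\to R$.

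For $R<\tfrac1{\sqrt\pi}$ I would show $\widehat{SH^*}(E_R;\Lambda)=0$ by contracting the completed complex. Since $R_n<R$, the effective slopes satisfy $1-\pi R_n^2\geq 1-\pi R^2=:\delta>0$ uniformly. Reading off Figure~\ref{fig:fullcomplex}, $\dd^{fl}(x^1_+)=x^0_-+(\text{a term one step up the staircase})$, and every term so produced is again hit, with one extra power of $T$, by the next generator of the staircase; summing these relations yields a formal sum $\xi=\sum_{k\geq1}\xi_k$ with $\cg{A}(\xi_k)=(1-\pi R_k^2)s_k+C(\xi_k)\geq\delta s_k-\mathrm{const}\to+\infty$, so $\xi\in\widehat{SC^*}(E_R;\Lambda)$ and $\dd^{fl}\xi=x^0_-$. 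The same telescoping applied to an arbitrary cycle kills its class, so the completed complex is acyclic. The only labour is the bookkeeping of coefficients and $T$-exponents along the two rows of Figure~\ref{fig:fullcomplex} and checking the convergence condition $\cg{A}\to\infty$.

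For $R\geq\tfrac1{\sqrt\pi}$, non-vanishing is free: $E_R$ contains the radius-$\tfrac1{\sqrt\pi}$ sphere bundle and hence the Floer-essential monotone Lagrangian torus of \cite{smith},\cite{ritter-s}, so Theorem~\ref{thm:nonvanish} gives $\widehat{SH^*}(E_R;\Lambda)\neq0$. It remains to prove the upper bound $\dim_\Lambda\widehat{SH^*}(E_R;\Lambda)\leq1$. When $R>\tfrac1{\sqrt\pi}$ this is easy: the top effective slopes $1-\pi R_n^2\to1-\pi R^2<0$ are eventually negative, so the telescoping sum $\xi$ above has terms of action tending to $-\infty$ and no longer lies in the completed complex; one then checks, via the Milnor $\varprojlim^1_b$ sequence for $H^*(\varprojlim_b SC^*_{(a,b)})$ together with $SH^*_{(a,b)}(E_R)=\varinjlim_n HF^*_{(a,b)}(H_n)$ and its stabilization in $b$, that the natural map from the uncompleted $SH^*(E_R;\Lambda)\cong\Lambda$ is an isomorphism.

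The remaining and genuinely delicate case is the borderline $R=\tfrac1{\sqrt\pi}$, where $1-\pi R_n^2>0$ for all $n$ but $1-\pi R_n^2\to0$; this is exactly where the ``surprising'' behaviour advertised in the introduction is concentrated. Here whether the telescoping sum $\xi$ converges is a priori sensitive to the rate $R_n\to R$, so one cannot argue through a single chain. The plan is instead to compute $\varprojlim_b SH^*_{(a,b)}(E_R)$ and $\varprojlim^1_b SH^{*-1}_{(a,b)}(E_R)$ directly from Figure~\ref{fig:fullcomplex} and the action formula, and to show that — whatever happens to the unit at chain level — the two together contribute exactly one copy of $\Lambda$, so that $\widehat{SH^*}(E_{1/\sqrt\pi};\Lambda)\cong\Lambda$ in accordance with the non-vanishing already obtained from Theorem~\ref{thm:nonvanish}. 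Identifying which inverse system of action-truncated groups carries this $\Lambda$ (as a genuine limit of surviving classes, or as a $\varprojlim^1$ term coming from the failure of Mittag--Leffler at slope $0$) and pinning its rank to $1$ is the main obstacle; everything else is routine given Lemmas~\ref{lemma:a-k} through~\ref{lemma:ritter}.
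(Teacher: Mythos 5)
Your handling of the two non-critical cases tracks the paper: for $\pi R^2<1$ the telescoping cochain $Z=\sum_i T^{i-1}x^i_+ + T^i y^i_+$ of equation (\ref{ob:cobound}) has divergent action, lies in the completed complex, and bounds $x^0_-$ (whence vanishing), while for $\pi R^2>1$ it fails to converge and the theory reduces to the uncompleted $\Lambda$. The genuine gap is the borderline case $R=\tfrac{1}{\sqrt{\pi}}$, which you explicitly leave unresolved: your $\varprojlim_b/\varprojlim^1_b$ computation is only a plan, and your stated worry --- that convergence of the telescope is ``sensitive to the rate $R_n\to R$'' --- is exactly the point the paper settles by a quantitative constraint you never use. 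The admissible Hamiltonians are required to be bounded in absolute value by $C$ on $[0,\pi R^2]$, convex, and of slope $n(1+\pi)+1$ beyond $\pi R_n^2$; convexity together with the uniform bound forces $\sum_n \bigl(n(1+\pi)+1\bigr)\bigl(\pi R_{n+1}^2-\pi R_n^2\bigr)<\infty$, hence $1-\pi R_n^2=O(1/n)$ and $(1-\pi R_n^2)\bigl(n(1+\pi)+1\bigr)$ stays bounded. So the actions of the terms of $Z$ remain bounded for \emph{every} admissible choice, $Z$ never lies in $\widehat{SC^*}(E_{1/\sqrt{\pi}};\Lambda)$, and the completed cohomology coincides with the uncompleted one, giving $\Lambda$. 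In other words the rate $R_n\to R$ is not a free parameter; it is pinned down by admissibility, and this observation is what replaces your unresolved ``main obstacle.''

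A secondary problem: for the lower bound at $R\geq\tfrac{1}{\sqrt{\pi}}$ you invoke Theorem \ref{thm:nonvanish}, but that theorem concerns the cone invariant $\widehat{SH^*}(W;\Lambda)$ of a \emph{Liouville cobordism} $W\subset M$, whereas the lemma computes the domain invariant $\widehat{SH^*}(E_R;\Lambda)$ of Subsection \ref{cohomology}; $E_R$ is not exact, as it contains the zero section. One could try to adapt Lemma \ref{lem:nonvanish} (non-vanishing of $\widehat{\cg{I}^*}$ on $\widehat{SH^*}(M;\Lambda)$) to the domain $E_R$, but at the critical radius the Floer-essential torus sits on $\dd E_R$ rather than in its interior, which requires additional care. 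The paper sidesteps all of this: both the vanishing and the rank-one statements come directly from the explicit chain-level model, and in any case non-vanishing alone would not pin the answer to $\Lambda$ at the critical radius without the quantitative argument above.
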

\begin{proof}
\begin{enumerate}
\item Suppose $\pi R^2 < 1$.  Then $\lim\limits_{i\rightarrow\infty}\cg{A}(T^{s_i}z_i) = \lim\limits_{i\rightarrow\infty} s_i$.
Thus, $\widehat{SC^{-\frac{1}{2}}}(E_R; \Lambda)$ includes the element
\begin{equation}
Z = \sum_{i=1}^{\infty} T^{i-1}x_+^{i} + T^{i}y_+^i.
\label{ob:cobound}
\end{equation}
The differential applied to $Z$ yields
\begin{align*}
\dd(Z) &= \sum_{i=1}^{\infty} T^{i-1}x_-^{i-1} + T^{i}y_-^{i-1} + T^{i}x_-^i + T^{i}y_-^{i-1} \\
&= x_-^0.
\end{align*}
By $T$-linearity of the differential, this computation extends to produce an annihilator for any element of the form $T^kx_-^0$.

$T^kx_-^0$, $T^{k+i}x_-^i$, and $T^{k+i+1}y_-^i$ are equivalent in cohomology, and so every cocycle generating $\widehat{SC^*}(E; \Lambda)$ is killed by a completed coboundary.  Similarly, any completed cocycle is killed by formally adding together the annihilators of the individual summands (by construction this formal sum will be an element of $\widehat{SC^*}(E_R; \Lambda)$).  Thus, $\widehat{SH^*}(E_R; \Lambda) = 0.$

\item If $\pi R^2 > 1$ the infinite sum (\ref{ob:cobound}) is no longer an element of $\widehat{SC^*}(E_R; \Lambda).$  The cohomology theory reduces to the uncompleted version and is therefore of rank one.

\item Finally, suppose $\pi R^2 = 1$.  By the assumptions of boundedness and convexity on each $h^n$, as well as the assumption that $(h^n)'(\pi R_n^2) = n(1+\pi) + 1$, it follows that 
\[
\sum\limits_{n=0}^{\infty}\left(n(1+\pi) + 1\right)\left(\pi R_{n+1}^2 - \pi R_n^2\right) < \infty.
\]
Therefore, $O(\pi R_{n+1}^2 - \pi R_n^2) < \frac{1}{n^2}$ as $n\rightarrow\infty$.  Because $\lim\limits_{n\rightarrow\infty} R_n = \frac{1}{\sqrt{\pi}}$, this implies that $1 - \pi R_n^2 < \frac{1}{n}$ for large enough $n$, and so $0 < (1 - \pi R_n^2)\left(n(1+\pi) + 1\right) < 2 + \pi$.  We deduce that the limit of the action of generators comprising the sum $Z$ in equation (\ref{ob:cobound}) is finite.  As in the case $\pi R^2 > 1$, we conclude that $\widehat{SH^*}(E_R; \Lambda) \cong \Lambda$.
\end{enumerate}
\end{proof}

A similar computation shows that 
\begin{lemma}
The completed symplectic homology of the disc bundle of radius $R$ is
\[
\widehat{SH_*}(E_R; \Lambda) \cong \left\{\begin{array}{cc} 0 & R \leq \frac{1}{\sqrt{\pi}} \\ \Lambda & R > \frac{1}{\sqrt{\pi}}\end{array}\right..
\]
\end{lemma}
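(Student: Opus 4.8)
The plan is to mirror the preceding subsection: replace the family $\{H_n\}$ of positively-sloped Hamiltonians, which computes completed symplectic cohomology, by a negatively-sloped family computing completed symplectic \emph{homology}, carry the computation through verbatim, and then isolate the single place where the homological picture diverges from the cohomological one --- the behaviour at the critical radius $R=\tfrac{1}{\sqrt{\pi}}$.

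First I would fix a family $\{G_n:E\longrightarrow\RR\}_{n\in\NN}$ with $G_n=g^n(\pi r^2)$, where $g^n$ is concave and monotone decreasing on $\RR_{\ge0}$, has slope $-(n(1+\pi)+1)$ on $(\pi R_n^2,\infty)$ for a sequence $R_n\nearrow R$, is bounded in absolute value by $C$ on $[0,\pi R_n^2]$, and agrees with $g^{n-1}$ on $[0,\pi R_{n-1}^2]$. Perturbing by the Morse function $F$ on $\CC P^1$ and passing to Morse--Bott cascades exactly as in Subsection~\ref{set-up-floer}, the one-periodic orbits of $X_{G_n}+\rho^*X_F$ are the two constant orbits over $N$ and $S$ together with four generators $x^k_{\pm},y^k_{\pm}$ for each \emph{negatively}-traversed Reeb orbit of period $k(1+\pi)$, $1\le k\le n$. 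A negatively-wound period-$k$ orbit contributes $+2k$ rather than $-2k$ to the Conley--Zehnder index, and lifts to a $(-k)$-fold fibre disc of area $\cong-k\pi R_k^2$; apart from these sign changes Lemmas~\ref{lemma:a-k}, \ref{lemma:winding1}, \ref{lemma:arnold} and \ref{lemma:ritter}, together with the identification of the continuation maps with the canonical inclusions, apply unchanged (equivalently, one transports the cochain-level results through the Poincar\'e-duality chain isomorphism of Subsection~\ref{homology}). The outcome is that $CF^*(G_n;\Gamma)$, together with its monotone continuation maps, is the complex of Figure~\ref{fig:fullcomplex} with all arrows reversed, with $HF^*(G_n;\Lambda)\cong\Lambda[x]/(x^2+T)$.

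Next I would redo the action-versus-area computation of the previous subsection. The two sign changes --- in the period contribution to the index and in the area of the lifting disc --- cancel in that manipulation, so one again obtains $\cg{A}_{G_n}(T^sz)=(1-\pi R_n^2)\,s+C(z)$ with $C(z)$ uniformly bounded (the boundedness of each $g^n$ near the origin controls the constant orbits, as before). Inside the completed symplectic chain complex
\[
\widehat{SC_*}(E_R;\Lambda)=\lim_{\substack{\longrightarrow\\ a}}\lim_{\substack{\longleftarrow\\ b}}\prod_{i=-1}^{-\infty}CF^*_{(a,b)}(G_{|i|};\Gamma)[{\bm\theta}]
\]
sits the analogue of the chain $Z$ from equation~(\ref{ob:cobound}), namely $Z'=\sum_{i=1}^{\infty}\bigl(T^{i-1}x^i_++T^iy^i_+\bigr){\bm\theta}$, which again satisfies $\delta^*(Z')=x^0_-{\bm\theta}$; so whenever $Z'$ and its $T$-translates are genuine elements of $\widehat{SC_*}(E_R;\Lambda)$, they (together with the horizontal differentials) provide null-homotopies for all generators and $\widehat{SH_*}(E_R;\Lambda)=0$. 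The computation then splits into the same three cases as in the cohomological lemma, the crucial difference being the membership condition: because $\widehat{SC_*}$ is assembled from a \emph{product} $\prod_{i=-1}^{-\infty}$ (and the direct limit over $a$ does not commute with this product) rather than from the direct sum $\bigoplus_{i=0}^{\infty}$ of the cochain theory, $Z'$ lies in $\widehat{SC_*}(E_R;\Lambda)$ as soon as the actions $\{\cg{A}(T^{k-1}x^k_+)\}_{k\ge1}$ are \emph{bounded below}, not only when they tend to $+\infty$. For $R<\tfrac1{\sqrt\pi}$ these actions tend to $+\infty$ and $Z'\in\widehat{SC_*}$, giving $0$; for $R>\tfrac1{\sqrt\pi}$ they tend to $-\infty$, so $Z'\notin\widehat{SC_*}$, the theory reduces to the uncompleted symplectic homology, and this is of rank one by Lemma~\ref{lemma:ritter} and the structure of the continuation maps; and for $R=\tfrac1{\sqrt\pi}$ the summability estimate $\sum_n\bigl(n(1+\pi)+1\bigr)(\pi R_{n+1}^2-\pi R_n^2)<\infty$ forces $0<1-\pi R_n^2<\tfrac1n$ for large $n$, so these actions stay bounded --- which now keeps $Z'$ \emph{inside} the completed complex instead of ejecting it, the exact reversal of the cochain-level phenomenon --- whence $\widehat{SH_*}(E_{1/\sqrt\pi};\Lambda)=0$.

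The step I expect to be the real obstacle is this boundary case: justifying carefully that at $R=\tfrac1{\sqrt\pi}$ the chain $Z'$ is a legitimate element of $\widehat{SC_*}(E_R;\Lambda)$. This means correctly untangling the interaction between the product $\prod_{i=-1}^{-\infty}$, the inverse limit over $b$ and the direct limit over $a$ --- invoking Remark~\ref{rem:order} to reorder the limits if convenient --- in order to see that the operative constraint is a uniform lower bound on actions. The remaining ingredients (the Morse--Bott setup, transversality, the identification of the differential and the continuation maps, and the cases $R\ne\tfrac1{\sqrt\pi}$) are a routine transcription of the preceding subsection and of \cite{albers-k}, \cite{ritter1}.
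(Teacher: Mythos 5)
Your proposal is correct, and it fills in exactly the ``similar computation'' that the paper declines to spell out. You have located the one genuine subtlety: the asymmetry at $R=\tfrac{1}{\sqrt{\pi}}$ comes from $\widehat{SC_*}$ being the action-completion of a \emph{product} $\prod_{i=-1}^{-\infty}$ rather than a direct sum, so the direct limit over $a$ imposes only a uniform lower bound on actions across the factors (the inverse limit over $b$ commuting with the product, the direct limit over $a$ not), and the boundary estimate $0<(1-\pi R_n^2)(n(1+\pi)+1)<2+\pi$ then keeps $Z'$ \emph{in} the completed chain complex even though the analogous $Z$ is excluded from $\widehat{SC^*}$; the sign cancellations in the index and lifting-disc area making $\cg{A}_{G_n}(T^sz)=(1-\pi R_n^2)s+C(z)$ persist also check out.
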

Theorem \ref{prop:example} when $R_1\leq \frac{1}{\sqrt{\pi}}$ now follows from the long-exact sequence (\ref{diag:les-cob}).  The case $R_1 > \frac{1}{\sqrt{\pi}}$ follows from the following lemma.

\begin{lemma}
The map $\mathfrak{c}:\widehat{SH_*}(E_{R_1}; \Lambda)\longrightarrow\widehat{SH^*}(E_{R_2}; \Lambda)$ defined in (\ref{eq:continuation}) is an isomorphism whenever $R_1 > \frac{1}{\sqrt{\pi}}$.
\end{lemma}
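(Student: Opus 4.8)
The plan is to reduce the statement to linear algebra over the field $\Lambda$ once explicit generators of the two sides are in hand. Since $R_1 > \frac{1}{\sqrt{\pi}}$ and $R_2 \ge R_1 > \frac{1}{\sqrt{\pi}}$, the two preceding lemmas give $\widehat{SH_*}(E_{R_1};\Lambda) \cong \Lambda \cong \widehat{SH^*}(E_{R_2};\Lambda)$, so both sides are one-dimensional $\Lambda$-vector spaces and it suffices to show that $\mathfrak{c}$ is not the zero map.

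First I would record that, when $\pi R^2 > 1$, the action completion has no effect: exactly as in the proof of the cohomology computation above, the only chains that can annihilate a surviving cocycle---the infinite sums of the shape $Z$ in \eqref{ob:cobound}---have generators whose actions tend to $-\infty$ and hence do not lie in the completed complexes. Consequently $\widehat{SC^*}(E_{R_2};\Lambda)$ and $\widehat{SC_*}(E_{R_1};\Lambda)$ agree with the corresponding uncompleted complexes; in particular the cocycle $x^0_-$ still represents the generator of $\widehat{SH^*}(E_{R_2};\Lambda)$, and, by the inverse-limit analogue of the same argument, the element ${\bf X} = \{\zeta_i\} + \{\eta_i\}{\bm \theta}$ with $\zeta_i = 0$ and $\eta_i = x^0_-$ for all $i$ is a cycle of $\widehat{SC_*}(E_{R_1};\Lambda)$ representing its generator. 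That ${\bf X}$ is a cycle is immediate from \eqref{eq:diff} together with the fact, recorded after Lemma~\ref{lemma:ritter} (via Lemma~\ref{lemma:winding1}), that the continuation maps may be taken to be the canonical inclusions, so that $c^{i-1}(x^0_-) = x^0_-$ and $\dd^{fl}(x^0_-) = 0$; that it is not a boundary amounts to the statement that $x^0_-$ admits no primitive in any single $CF^*(H^{\tau_i};\Gamma)$, which is precisely what the $\pi R_1^2 > 1$ case of the homology computation gives.

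Next I would compute $\mathfrak{c}({\bf X})$ directly from the definition \eqref{eq:continuation}: projecting onto the $CF^*_{V,(a,b)}(H^{\tau_{-1}};\Gamma){\bm \theta}$ summand returns $x^0_-{\bm \theta}$, applying $c^{-1}$ returns $x^0_- \in CF^*_{(a,b)}(H^{\tau_0};\Gamma)$, and including gives the cocycle $x^0_- \in SC^*_{(a,b)}(E_{R_2};\Gamma)$, whose class in $\widehat{SH^*}(E_{R_2};\Lambda)$ is the generator by the previous paragraph. Hence $\mathfrak{c}$ sends a generator of a one-dimensional $\Lambda$-vector space to a nonzero class, so it is an isomorphism. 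Feeding this back into the long-exact sequence \eqref{diag:les-cob} then yields $\widehat{SH^*}(W;\Lambda) = 0$, completing the case $R_1 > \frac{1}{\sqrt{\pi}}$ of Theorem~\ref{prop:example}.

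The part I expect to be the real obstacle is the homology-side input used in the second paragraph: one must carry out the inverse-limit analogue of the cohomology computation carefully enough to be sure that no destabilizing chain enters $\widehat{SC_*}(E_{R_1};\Lambda)$---so that ${\bf X}$ is genuinely nontrivial---bearing in mind that this complex is a product of the $CF^*(H^{\tau_i};\Gamma)$ rather than a direct sum, and simultaneously checking that the cone defining $SC^*(W;\Lambda)$ picks up no such chain either. The cycle/boundary verifications and the on-the-nose identification of $\mathfrak{c}$ are then routine bookkeeping with \eqref{eq:diff} and \eqref{eq:continuation}.
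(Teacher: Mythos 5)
The proposal misses the entire content of the lemma, which is the computation of the continuation map $c^{-1}$ appearing in the definition of $\mathfrak{c}$. You assert that ``applying $c^{-1}$ returns $x^0_-$'' because ``the continuation maps may be taken to be the canonical inclusions.'' But the observation after Lemma~\ref{lemma:ritter} only licenses this for continuation maps within the monotone family $\{H_n\}$ of positive-slope Hamiltonians (and, dually, within the negative-slope family). The map $c^{-1}$ in \eqref{eq:continuation} is the crossover continuation from $CF^*(H^{\tau_{-1}};\Gamma)$ to $CF^*(H^{\tau_0};\Gamma)$, i.e., from a Hamiltonian of negative slope (Poincar\'e-dually, $CF^*(-H_0;\Gamma)$) to one of positive slope. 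This is not an inclusion, and there is no reason a priori that it sends $x_-^0$ to $x_-^0$. Indeed, for Liouville domains the analogous map $SH_*(M)\to SH^*(M)$ factors through ordinary cohomology and frequently vanishes; the nontriviality here is a genuine feature of the negative-line-bundle geometry, and establishing it is precisely what the lemma must do. The paper's proof spends all of its effort here: it introduces the auxiliary maps $\phi^{\pm1}$ (the unital components of the dual quantum inclusion associated to the Lagrangian $L$), explicitly counts the spiked Maslov-$2$ configurations in Figure~\ref{fig:spiked} to get $\phi^0(y_-^0)=1$, $\phi^0(x_-^0)=T$, $\phi^{-1}(y_-^0)=0$, $\phi^{-1}(x_-^0)=T$, and then uses the chain-level compatibility $(\iota^0\circ c^{-1})^* = (\iota^{-1})^*$ together with index constraints to pin $(c^{-1})^*(x_-^0)$ down to a generator of $SH^*(E;\Lambda)$. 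None of this can be sidestepped.

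There is a secondary inaccuracy: the paper identifies the representing cycle for the generator of $\widehat{SH_*}(E_{R_1};\Lambda)$ as $x_-^0 + T^{-1}y_-^0 \in CF_*(H_0;\Gamma)$, not $x_-^0$ alone. Since $HF^*(H_0;\Lambda)\cong\Lambda[x]/(x^2+T)$ has rank two over $\Lambda$ while $\widehat{SH_*}(E_{R_1};\Lambda)$ has rank one, one must isolate the correct linear combination that survives, and the paper's argument (where $(c^{-1})^*(y_-^0)$ is shown to lie in $\{0, x_-^0+Ty_-^0\}$ with $[x_-^0+Ty_-^0]=0$ in $SH^*$) makes essential use of this. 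Choosing the wrong representative could make the computation of $\mathfrak{c}$ on the generator go awry. Your first paragraph (both sides are rank-one $\Lambda$-modules, so nonvanishing suffices) and the structure of reducing to the uncompleted theories when $\pi R^2 > 1$ are in line with the paper; it is the passage ``therefore $\mathfrak{c}$ is nonzero'' that you have not actually proved.
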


\begin{proof}
By Poincar\'e duality, $CF_*(H_0; \Gamma)$ is isomorphic (up to grading) to the cochain complex defined by the Hamiltonian $-H_0 - F\circ\rho$, which we denote by $CF^*(-H_0; \Gamma)$.  We will abuse notation and continue to denote the generators of $CF^*(-H_0; \Gamma)$ by $x_-^0$ and $y_-^0$.  Let $c^{-1}:CF^*(-H_0; \Gamma)\longrightarrow CF^*(H_0; \Gamma)$ be a continuation map.  If $R_1 > \frac{1}{\sqrt{\pi}}$ then $\widehat{SC_*}(E_{R_1}; \Lambda)$ and $\widehat{SC^*}(E_{R_2}; \Lambda)$ are canonically isomorphic to the uncompleted theories, and the map $\widehat{SC_*}(E_{R_1}; \Lambda)\xlongrightarrow{\mathfrak{c}}\widehat{SC^*}(E_{R_2}; \Lambda)$ is determined by the image of $x_-^0 + T^{-1}y_-^0\in CF_*(H_0; \Gamma)$ under the composition 
\[
CF_{-*}(H_0; \Gamma)\cong CF^*(-H_0; \Gamma) \xlongrightarrow{c^{-1}} CF^*(H_0; \Gamma)\xhookrightarrow{} SC^*(E; \Gamma).
\]

Recall the maps $\iota^{-1}_{(a, b)}$ and $\iota^0_{(a, b)}$ from Section \ref{non-vanish}, define through a Morse-Smale pair $(f, g_L)$ on $L$, and suppose that $f$ has a unique minimum $p$.  Analogous maps $\iota^{-1}$, respectively $\iota^0$, are defined from the (untruncated) Floer complexes $CF^*(-H_0; \Gamma)$, respectively $CF^*(H_0; \Gamma)$ to $CF^*(L; \Gamma)$.  The proof of Lemma \ref{lem:descent} extends to the equality 
\begin{equation}
(\iota^0\circ c^{-1})^* = (\iota^{-1})^*.
\label{eq:commute}
\end{equation}
We will use this identity to understand the map $c^{-1}$.

Let $\cg{M}^2(L, p)$ be the Maslov index-2 discs $u:(D^2, \dd D^2)\longrightarrow (M, L)$ with $u(1) = p$.  We have
\[
\sum_{u\in\cg{M}^2(L, p)}[\dd u] = 0
\]
(see \cite{auroux}, \cite{ritter-s}).
An index calculation now shows that the quantum differential $\dd$ on $HF^*(L; \Lambda)$ is the ordinary differential on $H^*(L; \Lambda)$ (Proposition 6.1.4 (a) in \cite{biran-c2}).  Therefore, $p$ is the only representative of the unit of  $HF^*(L; \Gamma)$ in $CF^*(L; \Gamma)$.  To analyze the contributions to the unit of $\iota^0$ and $\iota^{-1}$, it therefore suffices to analyze the pearly/Floer trajectory amalgamates that negatively asymptote to $p$.

Let $g_{\CC}$ be the standard metric on $\CC$ and let $g_{\CC P^1}$ be the standard metric on $\CC P^1$.  Let
\[
g = \left[\begin{array}{cc} g_{\CC} & 0 \\ 0 & g_{\CC P^1}\end{array}\right]
\]
be a metric on $E$ with respect to the splitting $TE\cong V\oplus H$, as in Subsection \ref{set-up-floer}.  Choose a generic almost-complex structure $J$.  Denote the quantum cochain complex associated to a Morse-Smale pair $(F, g)$ on $M$ by $QC^*(F)$.
Consider a map $\phi^{-1}: QC^*(-H - F\circ\rho)\longrightarrow QC^*(L)$, respectively $\phi^0: QC^*(H + F\circ\rho)\longrightarrow QC^*(L)$, that counts rigid configurations of the type shown in Figure \ref{fig:pearly-mod1}.  Explicitly, $\phi^{-1}(x)$, respectively $\phi^0(x)$, is the count of rigid configurations $(u_1, ..., u_{\ell})$ such that
\begin{enumerate}
\item $u_i: (D^2, \dd D) \longrightarrow (M, L)$ is a $J$-holomorphic disc that is non-constant if $i < \ell$,
\item $u_1(1) = p$,
\item there exists $t\in(-\infty, 0)$ such that $\Phi_t(u_{i+1}(1)) = u_i(-1)$ for all $i < \ell$, where $\Phi_t$ is the time-$t$ flow of $f$, and
\item there exists a flow line $\beta(t)$ of $-H^{\tau_{0}} - \rho^* F$, respectively $H^{\tau_0} + \rho^*F$, with $\lim\limits_{t\rightarrow\infty}\beta(t) = x$ and $\beta(0) = u_{\ell}(0, 0)$.
\end{enumerate}
As in Subsection \ref{lqc}, we only consider such configurations up to action by $Aut(D^2, \pm 1)^{\ell -1}$, where $Aut(D^2, \pm 1)$ is the set of automorphisms of $D^2$ fixing $\pm 1$.  The maps $\phi^{-1}$ and $\phi^0$ are the unital component of the dual of the quantum inclusion map studied in Section 5.4 of \cite{biran-c2}.

If $\pi_p:CF^*(L; \Lambda)\rightarrow\Lambda\cdot p$ is the projection onto the $\Lambda$-span of $p$, then under the PSS isomorphism,
\begin{equation}
(\pi_p\circ\iota^{-1})^* = (\phi^{-1})^*\hspace{1cm}\text{and}\hspace{1cm}(\pi_p\circ\iota^0)^* = (\phi^0)^*.
\end{equation}
\begin{figure}[htbp!]
\centering
\includegraphics[scale=.7]{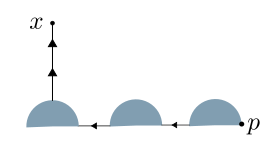}
\caption{Configurations defining $\phi^{-1}(x)$ and $\phi^0(x)$},
\label{fig:pearly-mod1}
\end{figure}

The dimension-zero configurations $(u_1, ..., u_{\ell})$ have $|x| = \mu(u_1) + ... + \mu(u_{\ell})$, where $|x|$ is the Morse grading and $\mu(u_i)$ is the Maslov index of $u_i$, \cite{biran-c2}.  Thus, $\phi^0(y_-^0)$ is a multiple of $T^0 = 1$ and $\phi^0(x_-^0)$ is a multiple of $T$.  In fact, there is precisely one gradient trajectory $\beta(t)$ with $\lim\limits_{t\rightarrow\infty}\beta(t) = y$ and $\beta(0) = p$, and so $\phi^0(y_-^0) = 1$.  This is the yellow curve in Figure \ref{fig:spiked1}.  

The configurations contributing to $\phi^0(x_-^0)$ look like a single Maslov index-2 disc $u$ with $u(1) = p$ and $u(0, 0)$ intersecting a gradient flow line that converges at positive infinity to $x_-^0$.  There is one such configuration, represented by the green curve in Figure \ref{fig:spiked1}.  Thus, $\phi^0(x_-^0) = T$.

Similarly, $\phi^{-1}(y_-^0) = 0$ and $\phi^{-1}(x_-^0) = T$, where the only contributing configuration is represented by the blue curve in Figure \ref{fig:spiked2}.

Using equation (\ref{eq:commute}) and the fact that $c^{-1}$ preserves the Conley-Zehnder index (\ref{eq:cz}), we deduce that 
\[
(c^{-1})^*(y_-^0) \in\left\{ 0, x_-^0 + Ty_-^0 \right\}\hspace{1cm}\text{and}\hspace{1cm}(c^{-1})^*(x_-^0) \in \left\{ T^{-1}x_-^0, y_-^0\right\}.
\]
As $[x_-^0 + Ty_-^0] = 0$ in $SH^*(E; \Lambda)$, $(c^{-1})^*(y_-^0) = 0$.  And $[T^{-1}x_-^0] = [y_-^0]$ generates $SH^*(E; \Lambda)$, so $(c^{-1})^*(x_-^0)$ generates $SH^*(E; \Lambda)$.  Thus, $(c^{-1})^*(x_-^0 + T^{-1}y_-^0)$ generates $SH^*(E; \Lambda)$.  We conclude that the map $\mathfrak{c}:SH_*(E_{R_1}; \Lambda)\longrightarrow SH^*(E_{R_2}; \Lambda)$ is an isomorphism.

\begin{figure}[htbp!]
\begin{subfigure}{.5\textwidth}
\centering
\begin{tikzpicture}
\centering
    \begin{axis}[
		hide axis,
		axis equal,
		domain=0:2.5,
		view={0}{90},
	]

    \draw [color=white, fill=teal!4] (10,0) -- (25,0) -- (25,25) -- (0,25) -- (0,10) -- cycle;

    \coordinate  (A) at (10,0);
    \coordinate (B) at (25,0);
    \coordinate (C) at (0,25);
    \coordinate (D) at (0,10);
    
    \coordinate (E) at (10, 10);
    \coordinate (F) at (5, 5);
    \draw[yellow] (E) -- (F);
    \draw[dotted] (E) -- (F);
    \draw[green] (13, 0) arc (-20:55:9);
    \draw[dotted] (13, 0) arc (-20:55:9);
    \draw[dotted] (0, 13) arc (110:35:9);
    \draw (10, 10) node[circle, fill=violet, scale=.5] [label=right:$L$] {};
    
    \draw[dashed, violet] (0, 20) -- (20, 0);

    \draw [<->, line width=.5pt] (B) -- (A) -- (D) -- (C);
    
    \addplot3[gray,
			quiver={
			 u={-2*x},
			 v={-2*y},
			 scale arrows=0.04,
			},
			-stealth,samples=10]
				{-x^2 - y^2};
				
   \draw [color=white, fill=white!4] (11,-1) -- (0,0) -- (-1,11) -- cycle;
   \draw[line width=.5pt] (10, 0) -- (0, 10);
   \draw[line width=.5pt, yellow!] (0, 10) -- (5, 5);
   \draw[ -stealth, yellow!] (2, 8) -- (1, 9);
   \draw[ -stealth, yellow!] (4, 6) -- (3, 7);
   \draw[ -stealth, gray] (7, 3) -- (6, 4);
   \draw[ -stealth, gray] (9, 1) -- (8, 2);
   \draw[line width = .5 pt, green] (10, 0) -- (13, 0);
   
   \draw (10, 0) node[circle, fill, scale =.25] [label=above:$x$] {$y$};
   \draw (0, 10) node[circle, fill, scale=.25] [label=right:$y$] {$x$};
    \end{axis}
\end{tikzpicture}
\caption{Spiked trajectories of $H^{\tau_0} + F\circ\rho$}
\label{fig:spiked1}
\end{subfigure}
\begin{subfigure}{.5\textwidth}
\centering
\begin{tikzpicture}
\centering
    \begin{axis}[
		hide axis,
		axis equal,
		domain=0:2.5,
		view={0}{90},
	]

    \draw [color=white, fill=teal!4] (10,0) -- (27,0) -- (27,30) -- (0,30) -- (0,10) -- cycle;

    \coordinate  (A) at (10,0);
    \coordinate (B) at (27,0);
    \coordinate (C) at (0,30);
    \coordinate (D) at (0,10);
    
    \coordinate (E) at (10, 10);
    \coordinate (F) at (5, 5);
    \draw[blue] (E) -- (F);
    \draw[dotted] (E) -- (F);
    \draw[dotted] (13, 0) arc (-20:55:9);
    \draw[dotted] (0, 13) arc (110:35:9);
    \draw (10, 10) node[circle, fill=violet, scale=.5] [label=right:$L$] {};
    
    \draw[dashed, violet] (0, 20) -- (20, 0);

    \draw [<->, line width=.5pt] (B) -- (A) -- (D) -- (C);
    
    \addplot3[gray,
			quiver={
			 u={2*x},
			 v={2*y},
			 scale arrows=0.04,
			},
			-stealth,samples=10]
				{-x^2 - y^2};
				
   \draw [color=white, fill=white!4] (11,-1) -- (0,0) -- (-1,11) -- cycle;
   \draw[line width=.5pt] (10, 0) -- (0, 10);
   \draw[ -stealth, blue!] (8,2) -- (9,1);
   \draw[ -stealth, gray!] (3,7) -- (4,6);
   \draw[ -stealth, blue] (6,4) -- (7,3);
   \draw[ -stealth, gray] (1,9) -- (2,8);
   \draw[line width = .5 pt, blue] (10, 0) -- (5, 5);
   
   \draw (10, 0) node[circle, fill, scale =.25, left] [label=above:$x$] {$y$};
   \draw (0, 10) node[circle, fill, scale=.25] [label=right:$y$] {$x$};
    \end{axis}
\end{tikzpicture}
\caption{Spiked trajectories of $-H^{\tau_0} - F\circ\rho$}
\label{fig:spiked2}
\end{subfigure}
\caption{The configurations contributing to $\phi^0$, respectively $\phi^{-1}$, depicted in Figure (a), respectively Figure (b).  The vector fields are approximations of the gradients of $\pm (H^{\tau_0} + F\circ\rho)$; they are, in fact, the gradients corresponding to the Morse-Bott pairs $\pm(H^{\tau_0}, F)$.}
\label{fig:spiked}
\end{figure}
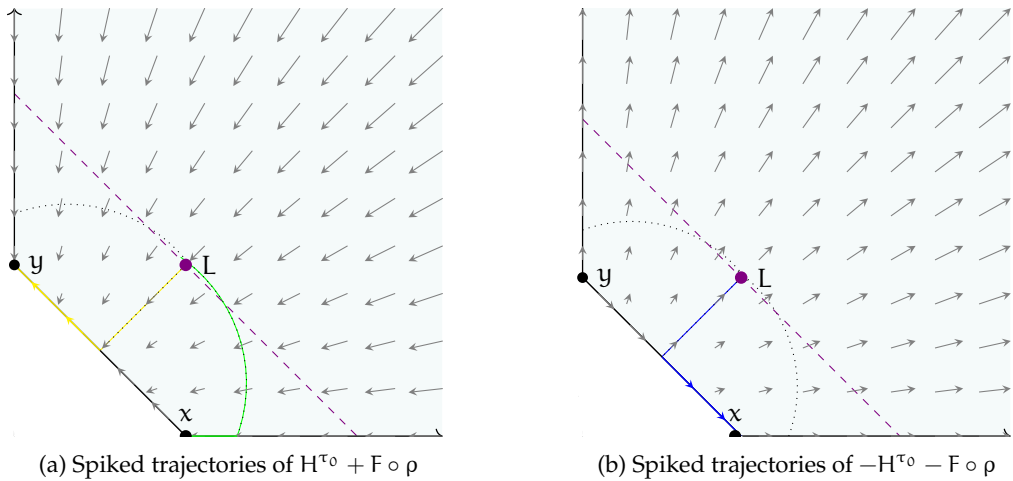

\end{proof}  

By the long exact sequence (\ref{diag:les-cob}), $\widehat{SH^*}(W) = 0$ when $R_1 > \frac{1}{\sqrt{\pi}}$.  This concludes the proof of Theorem \ref{prop:example}.

\section{Closed mirror symmetry predictions}
\label{hms}
As a generalization of the setup in Section \ref{example}, let $E_{m, k}$ be the complex line bundle $\cg{O}(-k) \xlongrightarrow{\rho} \CC P ^m$.  Equip $E_{m, k}\setminus\{r=0\}$ with the symplectic form $\Omega = \rho^*\omega_{FS} + d(k\pi r^2\alpha)$, where, as in Section \ref{example}, $\omega_{FS}$ is the rescaled Fubini-Studi form giving $\CC P^m$ an area of one, and $\alpha$ is a contact form on $SE$ with $d\alpha = \rho^*\omega_{FS}$.  Extend over the zero section by $\Omega\big|_{\{r=0\}} = \rho^*\omega_{FS}$.  We restrict to $1 \leq k \leq m$, in which case $E_{m, k}$ is monotone.

$E_{m, k}$ is a toric variety whose image under the moment map is
\[
\Delta := \left\{(v_1, ..., v_{m+1})\in\RR^{m+1}\hspace{.1cm}\bigg|\hspace{.1cm}v_i\geq 0\hspace{.05cm}\forall\hspace{.05cm} i\in\{1, ..., m+1\}; \hspace{.1cm} -v_1 - ... - v_m + kv_{m+1} \geq -1\right\}
\]
(see, for example, Subsection 7.6 in \cite{ritter} or Subsection 12.5 in \cite{ritter-s}).

Let $\KK = \CC$ and set $\Lambda^* = \Lambda\setminus\{0\}$.  Recall the valuation $val$, defined in (\ref{val}).  The mirror of $E_{m, k}$ is the subset of $(\Lambda^*)^{m+1}$ given by
\[
E^{\vee}_{m, k} := \left\{(z_1, ..., z_{m+1})\in(\Lambda^*)^{m+1}\hspace{.1cm}\big|\hspace{.1cm}\left(val(z_1), ..., val(z_{m+1})\right)\in\Delta^{\mathrm{o}}\right\},
\]
equipped with superpotential
\begin{align}
\cg{W}:E^{\vee}_{m, k}&\longrightarrow\Lambda \\
(z_1, z_2, ..., z_{m+1}) &\mapsto z_1 + z_2 + ... + z_m + z_{m+1} + Tz_1^{-1}z_2^{-1}...z_m^{-1}z_{m+1}^{k}.
\end{align}
(See Example 7.12 in \cite{ritter} or Proposition 4.2 in \cite{auroux}.)  Mirror symmetry predicts an isomorphism between the symplectic cohomology of a toric variety and the Jacobian of $\cg{W}$.  For example, computations in \cite{ritter-s} confirm that 
\begin{equation}
SH^*(E_{m, k}; \Lambda)\cong\bigslant{\Lambda[z_1^{\pm}, z_2^{\pm}, ..., z_{m+1}^{\pm}]}{(\dd_{z_1}W, \dd_{z_2}W, ..., \dd_{z_{m+1}}W)} =: Jac(\cg{W}).
\label{eq:jac}
\end{equation}

This story generalizes to domains of restricted size.  Let $D_RE_{m, k}$ be the disc bundle of radius $R$ in $E_{m, k}$.  The mirror of $D_RE_{m, k}$ is
\[
D_RE_{m, k}^{\vee}:= \left\{(z_1, ..., z_{m+1})\in E^{\vee}_{m, k}\hspace{.1cm}\bigg|\hspace{.1cm} val(z_{m+1}) \leq \pi R^2\right\},
\] 
equipped with $\cg{W}\big|_{D_RE_{m, k}^{\vee}}$.  

For $I = (i_1, ..., i_{m+1})\in\RR^{m+1}$ and ${\bf z} = (z_1, ..., z_{m+1})$, denote $(z_1^{i_1}, ..., z_{m+1}^{i_{m+1}})$ by ${\bf z}^{I}$.  We denote the ring of functions on $D_RE_{m, k}^{\vee}$ in the variable ${\bf z}$ by $\cg{O}(D_RE_{m, k}^{\vee})_{{\bf z}}$, where
\[
\cg{O}(D_RE_{m, k}^{\vee})_{{\bf z}} = \left\{\sum_{i=0}^{\infty} c_i{\bf z}^{I_i}\hspace{.1cm}\bigg|\hspace{.1cm} c_i\in\Lambda;\hspace{.1cm} I_i\in\RR^{m+1};\hspace{.1cm} \lim_{i\rightarrow\infty} val(c_i{\bf z}^{I_i}) = \infty\hspace{.1cm}\forall\hspace{.1cm}{\bf z}\in D_RE_{m, k}^{\vee}\right\}.
\]

Let $A_{(a, b]}\subset \Lambda$ be the annulus $\{z\in\Lambda\hspace{.05cm}\big|\hspace{.05cm} val(z)\in(a, b]\}$.  A straight-forward computation shows that
\[
Jac(\cg{W}\big|_{D_RE_{m, k}^{\vee}}) \cong \bigslant{\cg{O}(A_{(0, \pi R^2]})_{z_{m+1}}}{(1 - (-k)^kTz_{m+1}^{-1 - m + k})}.
\]
If $\pi R^2 < \frac{1}{1 + m - k}$, then $val(Tz_{m+1}^{-1 - m + k}) > 0$ for all $z_{m+1}\in A_{(0, \pi R^2]}$.  It follows that $1 - (-k)^kTz_{m+1}^{-1-m+k}$ is a unit in $\cg{O}(A_{(0, \pi R^2]})_{z_{m+1}}$, and so 
\[
Jac(\cg{W}\big|_{D_RE_{m, k}^{\vee}}) = 0.  
\]
If $\pi R^2 \geq \frac{1}{1 + m - k}$ then 
\[
Jac(\cg{W}\big|_{D_RE_{m, k}^{\vee}})\cong\bigslant{\Lambda[z]}{(1 - (-k)^kTz^{-1-m+k}}).
\]  
Mirror symmetry now predicts
\begin{equation}
\widehat{SH^*}(D_RE_{m, k}; \Lambda)\cong\left\{
\begin{array}{cc}
 \bigslant{\Lambda[z]}{\left(1 - (-k)^kTz^{-1-m+k}\right)} & R \geq \frac{1}{\sqrt{\pi(1+m-k)}} \\
0 & R <  \frac{1}{\sqrt{\pi(1+m-k)}}
\end{array}\right.
\label{eq:conj}
\end{equation}
Note that Equation (\ref{eq:conj}) restricted to $k=1, m = 2$ matches the result of Theorem \ref{prop:example}.

Generalizing Theorem \ref{prop:example}, we have the following conjecture.
\conjannuli*
Conjecture \ref{conj:annuli} predicts that $\widehat{SH^*}(W; \Lambda)$ is non-zero if and only if $W$ contains the monotone, Floer-essential Lagrangian contained in the radius--$\frac{1}{\sqrt{\pi(1+m-k)}}$ sphere bundle; equivalently, if and only if the expected mirror $(W^{\vee}, \cg{W}\big|_{W^{\vee}})$ of $W$, defined by
\[
W^{\vee} := \left\{(z_1, ..., z_{m+1})\in E_{m, k}^{\vee}\hspace{.1cm}\big|\hspace{.1cm} \pi R_1^2 \leq val(z_{m+1})\leq \pi R_2^2\right\},
\]
contains the critical locus of $\cg{W}$.


\begin{thebibliography}{9}
\bibitem{abouzaid}
M. Abouzaid, Symplectic cohomology and Viterbo's theorem. {\it Free loop spaces in geometry and topology}, 271--485, IRMA Lect. Math. Theor. Phys., 24, Eur. Math. Soc., Z\"urich, 2015.
\bibitem{abouzaid-s}
M. Abouzaid and P. Seidel, An open string analogue of Viterbo functoriality. {\it Geom. Topol.} 14 (2010), no. 2, 627--718.
\bibitem{albers}
P. Albers, On the extrinsic topology of Lagrangian submanifolds. {\it Int. Math. Res. Not.},
2005(38):2341?2371, 2005.
\bibitem{albers-k}
P. Albers and J. Kang, Vanishing of Rabinowitz Floer homology on negative line bundles. {\it Math. Z.} 285 (2017), no. 1-2, 493--517.
\bibitem{auroux}
D. Auroux, Mirror symmetry and T-duality in the complement of an anticanonical divisor. {\it J. G\"okova Geom. Topol. GGT} 1 (2007), 51--91.
\bibitem{biran-c1}
P. Biran and O. Cornea, Lagrangian topology and enumerative geometry. {\it Geom. Topol.} 16 (2012), no. 2, 963?1052. (Reviewer: Jelena Kati\`c) 53D12 (53D40 53D45)
\bibitem{biran-c2}
--, A Lagrangian quantum homology. New perspectives and challenges in symplectic field theory, 1--44, CRM Proc. Lecture Notes, 49, {\it Amer. Math. Soc.}, Providence, RI, 2009.
\bibitem{biran-c3}
--, Rigidity and uniruling for Lagrangian submanifolds. {\it Geom. Topol.} 13 (2009), no. 5, 2881--2989.
\bibitem{bourgeois-o}
F. Bourgeois and A. Oancea, An exact sequence for contact- and symplectic homology. {\it Invent. Math.} 175 (2009), no. 3, 611--680.
\bibitem{cieliebak-f}
K. Cieliebak and U. Frauenfelder, Morse homology on noncompact manifolds. {\it J. Korean Math. Soc.} 48 (2011), no. 4, 749--774.
\bibitem{cieliebak-f-o}
K. Cieliebak, U. Frauenfelder, and A. Oancea, Rabinowitz Floer homology and symplectic homology. {\it Ann. Sci. Éc. Norm. Sup\'er. (4)} 43 (2010), no. 6, 957--1015. 
\bibitem{cieliebak-o}
K. Cieliebak and A. Oancea, Symplectic homology and the Eilenberg-Steenrod axioms, available at \href{https://arxiv.org/abs/1511.00485}{arXiv:1511.00485}.
\bibitem{frauenfelder}
U. Frauenfelder, Rabinowitz action functional on very negative line bundles, Habilitationsschrift (2008).
\bibitem{frei-m}
A. Frei and J. Macdonald, Limits in categories of relations and limit-colimit commutation, {\it J. Pure Appl. Algebra 1} (1971), no. 2, 179--197.
\bibitem{geiges}
H. Geiges, An introduction to contact topology. {\it Cambridge Studies in Advanced Mathematics}, vol.
109, Cambridge University Press, 2008.
\bibitem{lazzarini}
L. Lazzarini, Existence of a somewhere injective pseudo-holomorphic disc, {\it Geom. Funct. Anal. (GAFA)},
Vol. 10, 829--862, 2000.
\bibitem{piunikhin}
S. Piunikhin, D. Salamon, and M. Schwarz, Symplectic Floer-Donaldson theory and quantum cohomology, {\it Contact and symplectic geometry} (Cambridge, 1994), 171?200, Publ. Newton Inst. 8, CUP, 1996.
\bibitem{ritter}
A. Ritter, Circle actions, quantum cohomology, and the Fukaya category of Fano toric varieties. {\it Geom. Topol. 20} (2016), no. 4, 1941--2052.
\bibitem{ritter1}
--, Floer theory for negative line bundles via Gromov-Witten invariants. {\it Adv. Math.} 262 (2014), 1035--1106.
\bibitem{ritter-s}
A. Ritter and I. Smith, The monotone wrapped Fukaya category and the open-closed string map. {\it Sel. Math. New Ser.} (2017) 23: 533.
\bibitem{salamon}
D. Salamon, Lectures on Floer homology. {\it Symplectic geometry and topology (Park City, UT, 1997)}, 143--229, IAS/Park City Math. Ser., 7, Amer. Math. Soc., Providence, RI, 1999.
\bibitem{seidel}
P. Seidel, A biased view of symplectic cohomology. {\it Current developments in mathematics, 2006}, 211--253, Int. Press, Somerville, MA, 2008. 
\bibitem{smith}
I. Smith, Floer cohomology and pencils of quadrics. {\it Invent. Math.} 189 (2012), no. 1, 149--250.
\bibitem{weibel}
C. Weibel, An introduction to homological algebra, {\it Cambridge Studies in Advanced Mathematics}, 38,
Cambridge University Press, 1994.
\end{thebibliography}
\end{document}